\crefname{enumi}{Assumption}{Assumptions}
\renewcommand{\cal}[1]{\mathcal{#1}}
\renewcommand{\r}{\mathbb{R}}
\newcommand{\n}{\mathbb{N}}
\newcommand\numberthis{\addtocounter{equation}{1}\tag{\theequation}}
\newtheorem{definition}{Definition}
\newtheorem{theorem}{Theorem}
\newtheorem{corollary}{Corollary}
\newtheorem{lemma}{Lemma}
\newtheorem{proposition}{Proposition}
\theoremstyle{definition}
\newtheorem{example}{Example}
\newtheorem{remark}{Remark}
\newcommand\Xspace{\ensuremath{\mathsf{X}}\xspace}
\newcommand\Pspace{\ensuremath{\mathcal{P}(\Xspace)}\xspace}
\newcommand\Mspace{\mathcal{M}(\Xspace)}
\newcommand\Mzspace{\mathcal{M}_{0}(\Xspace)}
\newcommand\Pdspace{\mathcal{P}_\lambda(\Xspace)}
\newcommand\BorelX{\mathcal{B}(\Xspace)}
\newcommand\CbVfunX{\mathbf{C}_{V}(\Xspace)}
\newcommand\CbVfunXa{\mathbf{C}_{V^\alpha}(\Xspace)}
\newcommand\BfunX{\mathbf{B}_1(\Xspace)}
\newcommand\ConefunX{\mathbf{C}(\Xspace)}
\newcommand\BVfunX{\mathbf{B}_V(\Xspace)}
\renewcommand\BVfunX{\mathbf{B}_V(\Xspace)}
\newcommand\Pas{$\mathbb{P}$\text{-a.s.}\xspace}
\newcommand\supnorm[1]{\left\vert#1\right\vert_{\infty}}
\newcommand\tvnorm[1]{\norm{#1}_{\rm tv}}
\newcommand\Vnormf[1]{\left\vert#1\right\vert_{V}}
\newcommand\Vnormfa[1]{\left\vert#1\right\vert_{V^\alpha}}
\renewcommand\Vnormfa[1]{\left\vert#1\right\vert_{V^\alpha}}
\newcommand\Vnorm[1]{\norm{#1}_{V}}
\renewcommand\Vnorm[1]{\norm{#1}_{V}}
\newcommand\Vnorma[1]{\norm{#1}_{V^\alpha}}
\renewcommand\Vnorma[1]{\norm{#1}_{V^\alpha}}
\newcommand\Id{\ensuremath{\textrm{Id}}\xspace}
\newcommand\Pder[3]{\partial_\pi P_{#1}(#2,#3)}
\newcommand\Pderorth[3]{\partial_\pi P_{#1}^\perp(#2,#3)}
\newcommand\rhodomainP[1]{\mathcal{D}_{#1}(P)}
\newcommand\Pderintr[3]{D_\pi P_{#1}(#2,#3) }
\newcommand\Pderk[4]{\partial_\pi P_{#1}^{#4}(#2,#3)}
\newcommand\neigh[2]{\mathcal{N}(#1,#2)}
\DeclareMathOperator{\ess}{ess}
\title{A calculus for Markov chain Monte Carlo:\\   studying approximations in algorithms}
\author{
	Rocco Caprio\footnotemark[2]
	\and
	Adam M. Johansen\footnotemark[2]
}
\begin{document}
	\maketitle

	\begin{abstract} 
		Markov chain Monte Carlo (MCMC) algorithms are based on the construction of a Markov chain with transition probabilities leaving invariant a probability distribution of interest. In this work, we look at these transition probabilities as functions of their invariant distributions, and we develop a notion of \textit{derivative in the invariant distribution} of a MCMC kernel. We build around this concept a set of tools that we refer to as \textit{Markov chain Monte Carlo Calculus}. This allows us to compare Markov chains with different invariant distributions within a suitable class via what we refer to as mean value inequalities. We explain how MCMC Calculus provides a natural framework to study algorithms using an approximation of an invariant distribution, and we illustrate this by using the tools developed to prove convergence of interacting and sequential MCMC algorithms. Finally, we discuss how similar ideas can be used in other frameworks.	
		\vspace{1.5mm}
		
		\noindent \textbf{Keywords:} Markov chain Monte Carlo, intractable likelihood, approximations, functional derivatives, particle filtering.
	\end{abstract}

	\section{Introduction}
	\footnotetext[2]{Department of Statistics, University of Warwick. Email: \{rocco.caprio, a.m.johansen\}@warwick.ac.uk.}

	Markov chain Monte Carlo (MCMC) algorithms are based on the construction of a Markov chain, invariant with respect to a distribution of interest, on some Polish space $(\Xspace,\BorelX)$. The collection of probability measures on this space will be denoted \Pspace. The transition probabilities of such a chain are described by a \emph{Markov kernel} on $\Xspace\times \BorelX$: a function $P:\Xspace\times\BorelX\mapsto [0,1]$ such that (i) for each $A\in\BorelX$, $P(\cdot,A)$ is a non-negative measurable function on $\Xspace$ and (ii) for each $x\in\Xspace$, $P(x,\cdot)\in\Pspace$. $P$ acts on suitably integrable functions via $P(x,f):=Pf(x):=\int f(y)P(x,\dif y)$, and on probability measures via $\rho P(\dif x):=P(\rho,\dif x):=\int \rho(\dif y)P(y,\dif x)\in\Pspace$. Whenever a probability measure $\mu$ on $\Xspace$ satisfies $\mu= P(\mu,\cdot)$, we say that $\mu$ is an invariant distribution for $P$. A Markov kernel having an invariant distribution $\mu$ will be denoted $P_\mu$ whenever the emphasis is important. 
	
    When one can evaluate the density of a target measure $\mu$ pointwise (up to a possibly unknown normalizing constant) there are many mechanisms for specifying a kernel $P_\mu$. It often happens, however, that $\mu$ is intractable, or just very hard to target directly. In these situations, possible and popular remedies include simply using an approximation $\mu_n$ of $\mu$ and hence simulating a homogeneous Markov chain with transitions $P_{\mu_n}$, or considering an evolving sequence $\{\mu_n\}_{n\in\mathbb{N}}$ converging to $\mu$ in some sense, and simulating an inhomogeneous Markov chain $\{P_{\mu_n}\}_{n\in\mathbb{N}}$, perhaps in a tempering-like context. In these situations, one usually has control over how good the approximation is, and it might be of interest to ensure that the ``approximate'' chain $P_{\mu_n}$ moves similarly to the ``limiting'' chain   $P_\mu$, and perhaps to quantify how the ``degree of similarity'' depends on the quality of our approximation.
	
	Roughly speaking, the ultimate goal of this paper is to look at Markov kernels as functions of their invariant distribution, and to be able to say that $P_{\mu_n}$ and $P_\mu$  are similar whenever we can say both that $\mu$ and $\mu_n$ are similar and that the derivative in the invariant distribution of $P_\cdot$ at $\mu$ towards $\mu_n$ is bounded. In this paper, we shall make sense of this sentence, first by introducing a notion of a family of Markov kernels, where one has a mapping from the invariant distribution to the Markov kernel (\cref{sec:markovfamilies}), and then of derivative in the invariant distribution and of boundedness thereof (\cref{sec:derivative,sec:ftc}). This is accomplished by adapting some standard functional derivatives to the Markov kernel setting and applying them in the MCMC context. We shall then provide a ``Fundamental theorem of MCMC Calculus’’ to compare Markov chains with different invariant distribution within the same family. When $\mu,\nu$ have positive densities, this result will allow us to easily derive \emph{mean-value} type inequalities 
	\begin{equation*}
		\tvnorm{P_\mu(\rho,\cdot)-P_\nu(\rho,\cdot)} \leq M_{\rho}\tvnorm{\mu-\nu}
	\end{equation*}
	when $\rho$ also has a positive density, and when $\rho=\delta_x$,
	\begin{equation*}
		\tvnorm{P_\mu(x,\cdot)-P_\nu(x,\cdot)} \leq M_x \tvnorm{\mu-\nu}+ M_{\perp,x}|\mu(x)-\nu(x)|,
	\end{equation*}
	for Hastings type and Gibbs kernels, with explicit values for the ``Lipschitz constants'' $M_{x}, M_{\perp, x}$ and $M_{\rho}$ (\cref{cor:MHboundedder,cor:Gibbsboundedder}).
        
        These results are immediately useful when one is using an approximate Chain $P_{\mu_n}$ and wants it to move like $P_{\mu}$: it suffices to pick a Markov kernel with finite Lipschitz constants
        (which will mean that the kernel has a bounded derivative in the invariant distribution) or to minimize fluctuations around $P_\mu$. In \cref{sec:applications} we use our calculus approach to study the efficiency of approximation-based MCMC algorithms like Sequential MCMC or Interacting MCMC in terms of their asymptotic variance. In short, the mean value inequalities allow us to verify the \textit{diminishing adaptation}-type conditions one usually has to verify in adaptive algorithms. Our approach allow for a very natural study of these, allowing us to state conditions for convergence simply in terms of good enough approximation quality, boundedness of the derivative, and enough uniformity of ergodic constants along the approximation sequence, and we improve existent convergence results. In Section \ref{sec:discussion} we conclude with a discussion on the applications of the tools developed beyond those considered here.

	
	\section{Markov Chain Monte Carlo Calculus} \label{sec:mcmccalc}
	
	\subsection{Families of Markov kernels} \label{sec:markovfamilies}
	To compare Markov kernels with different invariant distributions, our main tool will be the derivative in the invariant distribution of a Markov kernel, which we introduce in the next section. This quantity will describe, in some sense, the variation of the transition probabilities of a Markov Chain when $\mu$ is perturbed. Certainly, there are many possible transition probabilities associated with each invariant distribution, so to accomplish this we first have to restrict the class of Markov kernels considered. We introduce the notion of \textit{family of Markov kernels}---which is essentially a collection of Markov kernels indexed by a convex subset of invariant distributions. In the following definitions, and throughout this work, we denote with $\lambda$ a reference $\sigma$-finite measure. Usually, that will correspond to Lebesgue measure whenever $\Xspace=\r^{d}$ or the counting measure if $\Xspace=\mathbb{Z}$. $\cal{P}(\Xspace)$ denotes the set of probability measures on $\Xspace$, and $\Pdspace$ the set of probability measures with continuous bounded densities w.r.t.~$\lambda$. When $\mu\in\Pdspace$, we often abuse notation and just write $\mu(x)$ for the value of $\dif \mu/\dif \lambda$ at $x\in\Xspace$.
	
	\begin{definition}[Family of Markov kernels] \label{def:markovfamily}
		Let $\mathcal{I}$ be a convex subset of $\Pdspace$. A family of Markov kernels $\{P_\star\}$
		is a set $\{P_\mu:\Xspace \times \BorelX\mapsto[0,1] \vert \mu\in\mathcal{I} \}$, where for all $ \mu \in \mathcal{I}$ one has $ \mu P_\mu = \mu$.
	\end{definition}
	
	According to our definition, if for some $\mu,\nu\in\Pdspace$, $P_\mu, P_\nu \in\{P_\star\}$, then defining the curve $\mu_t:=(1-t)\mu + t\nu$, as the map $t\in[0,1]\mapsto \mu_t \in \Pdspace$, interpolating $\mu$ and $\nu$, we have that $P_{\mu_t}\in \{ P_\star\}$ for any $t\in[0,1]$. $\mu_t$ and $P_{\mu_t}$ will play an important role. The following two examples describe some Markov kernel families.

	\begin{example}[The Hastings family] \label{ex:MHtypekernels}
	  Let $\mu\in\Pdspace$ and let $Q$ be a \textit{proposal} Markov kernel such that $Q(x,dy)=q(x,y)\lambda(dy)$ for all $x\in\Xspace$ and let $g$ be a \textit{balancing function} satisfying $g(x)=xg(1/x)$, $g(x)\leq 1$. For $x\in\Xspace$ and suitable test functions $f$, consider the Markov kernels of the form
     \begin{equation} \label{eq:MHtypekernels}
        P_\mu(x,f) := \int f(y)Q(x,\dif y)g(r_\mu(x,y)) + f(x)\left(1-\int Q(x,\dif y)g(r_\mu(x,y))\right) 
     \end{equation}   
     where 
     \begin{equation*}  
        r_\mu(x,y) :=
                \frac{\mu(y)q(y,x)}{\mu(x)q(x,y)} \quad \text{if} \quad \mu(x)q(x,y)>0,\quad \text{and} \quad 	r_\mu(x,y) :=1 \quad \text{otherwise}. 
     \end{equation*}
     These kernels have $\mu$ as invariant distribution and are indexed  by the convex set of all probability distributions $\mu$ with a density w.r.t. $\lambda$ on $\Xspace$, or any convex subset thereof. That is, they form proper Markov families as per \cref{def:markovfamily}. For generic $Q$ and $g$, we refer to the \textit{Hastings family}, as \cite{hastings1970mcmc} introduced kernels of this general form. When, for instance, $Q$ is a Gaussian Random Walk (RW) and $g(x)=\min(1,x)$ one obtain a \textit{RW Metropolis family} \cite{metropolis1953mcmc} over the same set of invariant distributions. If $Q$ is a Gaussian RW and $g(x)=x/(1+x)$, they form a \textit{RW Barker family} \cite{barker1965mcmc} etc. In general, our framework imposes no requirement that $Q$ be fixed across the family as described in this simple example and in general it can depend upon $\mu$. 
	\end{example}

	\begin{example}[The Gibbs family] \label{ex:Gibbsfamily}
	  Suppose $\mu$ is the probability distribution of a random vector $(U_1,U_2)$. Let $\mu_{1|2}$ and $\mu_{2|1}$ two Markov kernels on $\Xspace\times\BorelX$ representing the conditional distributions of $U_1$ given $U_2$ and $U_2$ given $U_1$, respectively. The Markov kernels on $\Xspace^2\times\mathcal{B}(\Xspace^2)$ given for all $x\in\Xspace$ and suitable test functions $f$ by
		\begin{equation} \label{eq:DSGibbskernel}
			P_\mu((x,x'),f):=\int \mu_{1|2}(x',\dif y)\mu_{2|1}(y,\dif y')f(y,y')
		\end{equation}
		are $\mu$-invariant. Kernels of the type \eqref{eq:DSGibbskernel} form a proper Markov family---the (two-stage) deterministic scan Gibbs family. 
	\end{example}
	
	We considered here the two-stage Gibbs family purely for ease for exposition and simplicity of notation; corresponding results can be obtained for much more general Gibbs samplers. Considering other Markov kernel families is possible. For instance, if in \cref{ex:MHtypekernels} we also allow $Q$ to depend on the gradient of $\mu$ as in the Metropolis-Adjusted Langevin Algorithm (MALA; \cite{roberts1996MALA}), one could obtain a MALA family (over the probability distributions with differentiable densities). If $\mu_{1|2}$, $\mu_{2|1}$ in \cref{ex:Gibbsfamily} instead of being conditional distributions are Markov kernels invariant for the same, one could obtain a Metropolis-within-Gibbs family etc. The requirements imposed upon a Markov family are not restrictive, and it is in principle also possible to consider a Markov family that is using a given type of Markov kernel if the invariant distribution belong to a certain set (e.g. MALA if the invariant density is differentiable), and another Markov kernel otherwise (RW Metropolis if it is not); however, the kernel differentiability properties we discuss next would be harder to verify.
	
	\subsection{The derivative in the invariant distribution} \label{sec:derivative}
	
	Let $\{P_\star\}$ be a Markov family indexed by a convex set of probability measures admitting densities w.r.t. some $\sigma$-finite measure $\lambda$. We will define the derivative with respect to the invariant distribution of $P_\cdot$ within that family at a point $\mu$ by first defining the derivative of the functional $P_\cdot(\rho,f):\mu\in\Pspace\mapsto P_\mu(\rho,f)\in\mathbb{R}$, for all $(\rho,f)$ in some suitable class. 
    In the below, we let $\Mzspace$ denote the set of $0$-mass finite signed measures on $\Xspace$. For $V:\Xspace\rightarrow [1,\infty)$, we define the $V$-norm of a function $f:\Xspace\mapsto \mathbb{R}$ and of a signed measure $\chi\in\Mspace$ respectively as 
	\begin{equation*}
		\Vnormf{f}:=\sup_{x\in \Xspace} \frac{\vert f(x) \vert}{V(x)} \quad \text{and} \quad \Vnorm{\chi}:=\sup_{f \in \BVfunX}\left\vert \int f\dif \chi\right\vert,
	\end{equation*}
        where $\BVfunX$ is the collection of functions with bounded $V$-norm.
	When $V=1$, the $V$-norm of a function and of a signed measure are just the supremum norm $\supnorm{\cdot}$ and the total variation norm $\tvnorm{\cdot}$, respectively. Throughout this section, we fix such a $V:\Xspace \mapsto [1,\infty)$, and consider $f \in \BVfunX$. The $V$ function directly influences how stringent the differentiability statement is, but also what we are able to say with the objects we introduce next, and its role will become clear when we introduce mean value inequalities later. 
	\begin{definition} \label{def:derivative}
		The derivative of $P_\cdot(\rho,f)$ in the invariant distribution at $\mu\in\Pdspace$, if it exists, is any linear functional $\Pder{\mu}{\rho}{f}[\cdot]:\Mzspace\mapsto \mathbb{R}$ such that for all $\nu\in\Pdspace$,
		\begin{equation} \label{eq:derivative}
			\frac{\dif }{\dif t} P_{\mu+t(\nu-\mu)}(\rho,f)\biggr|_{t=0} = \Pder{\mu}{\rho}{f}[\nu-\mu] 
		\end{equation}
	\end{definition}
	
		We can now leverage these quantities to define the derivative of transition probabilities.
	\begin{definition}
		The derivative of $P_\cdot(\rho,\cdot)$  in the invariant distribution at $\mu$ is the operator $\Pder{\mu}{\rho}{\cdot}[\cdot]:f\in \BVfunX \mapsto \Pder{\mu}{\rho}{f}[\cdot]$, where $\Pder{\mu}{\rho}{f}[\cdot]$ is defined in Definition \ref{def:derivative}.    
	\end{definition} 
	The differentiability of  $P_\cdot(\rho,\cdot)$ amounts to ask that $P_\cdot(\rho,f)$ is differentiable in the sense of Definition \ref{def:derivative} for all $f\in\BVfunX$. We denote with $\rhodomainP{\mu}$ the set of initial distributions $\rho$ for which $P_\cdot(\rho,\cdot)$ is differentiable at $\mu$. We are now in a position to define  the derivative of a Markov kernel itself.
	\begin{definition}
		The derivative of $P_\cdot$ in the invariant distribution at $\mu$ is the operator $\Pder{\mu}{\cdot}{\cdot}[\cdot]:(\rho,f)\in \rhodomainP{\mu}\times\BVfunX \mapsto \Pder{\mu}{\rho}{f}[\cdot]$, where $\Pder{\mu}{\rho}{f}[\cdot]$ is defined in \cref{def:derivative}.    
	\end{definition} 	
	We will sometimes refer to the derivative in the invariant measure of a Markov Kernel simply as ``kernel derivative''. As this quantity is defined indirectly for each point $(\rho,f)\in\rhodomainP{\mu}\times\BVfunX$, we sometimes also refer to  $\Pder{\mu}{\rho}{f}$ as a kernel derivative. 
	In all our examples, when $\rho\in\rhodomainP{\mu}$ has a density w.r.t. $\lambda$, the action of the functional $\Pder{\mu}{\rho}{f}$ will be expressible in integral form. In particular, for a measurable function abusively denoted as $\Pder{\mu}{\rho}{f}(\cdot):\Xspace\mapsto \mathbb{R}$ we will have
	\begin{align}
		\Pder{\mu}{\rho}{f}[\nu-\mu] = \int (\nu(y)-\mu(y))\Pder{\mu}{\rho}{f}(y)\lambda(\dif y) \quad \text{and}  \label{eq:derivativefirstvariation} \\
		\mu(\Pder{\mu}{\rho}{f}(\cdot))=0.  \label{eq:derivativecentering}
	\end{align}
	We sometimes distinguish the functional  $\Pder{\mu}{\rho}{f}[\cdot]$ from the function of its integral representation only via the brackets that follow. The context will always make obvious to which object we are referring to. The function $\Pder{\mu}{\rho}{f}(\cdot)$ will be referred to as density of the derivative in the invariant distribution. The density of $\Pder{\mu}{\rho}{f}$ corresponds to the $\cal{L}^2$-\emph{first variation} of $\mu\mapsto P_\mu(\rho,f)$, and its analogues appears for instance in \cite{cardaliaguet2019mastereqn} in the context of Mean Field Games and in Optimal Transport in \cite{santambrogio2015ot}.  Under some conditions on $\Xspace$ and $\Pspace$, these can be related to notions of derivatives in the Wasserstein geometry \cite{ambrosio2005gradientflows}, see also \cref{prop:FTC}. Here, \eqref{eq:derivativecentering} is a centering condition, and it guarantees that the integral representation is unique. Without, it would be unique only up to constants. 
	While for $\rho\in\Pdspace$ our $\Pder{\mu}{\rho}{f}$ will have a density, that will not be the case for atomic measures such as $\rho=\delta_x$.  In particular, we will more generally see that
	\begin{align} \label{eq:derivativesdec}
		\Pder{\mu}{\rho}{f}[\nu-\mu] = \int (\nu(y)-\mu(y))\Pder{\mu}{\rho}{f}(y)\lambda(\dif y) + \rho((\nu-\mu)\cdot\Pderorth{\mu}{\rho}{f})
	\end{align}
	with $	\mu(\Pder{\mu}{\rho}{f}(\cdot))=0$.
	Here, the function $\Pder{\mu}{\rho}{f}(\cdot)$ is the `density part' of the derivative, while $\Pderorth{\mu}{\rho}{f}(\cdot)$ plays the role of the `singular part'.
     Notice that if $\rho\in\Pdspace$ in the above, then we can just write \eqref{eq:derivativesdec} in the form of \eqref{eq:derivativefirstvariation} for an appropriate density function $\Pder{\mu}{\rho}{f}(\cdot)$.
	Let us come to the interpretation of these derivatives and differentials at $\mu$: they provide a map from the initial distribution to signed integral operators which
	capture the variation of the transitions associated with Markov kernels within a particularly family with small perturbations of the associated invariant measure, $\mu$. In other words, the derivative in the invariant measure quantifies how much the conditional expectation $P_{\mu}(\rho,\cdot)$ changes when the invariant distribution $\mu$ is perturbed by the $0$-mass measure $\chi=\nu-\mu$. Intuitively, if the derivative is \textit{large} in some sense, the Markov chain is not \textit{robust} to perturbations in the invariant measure. If we think of $\mu$ as an ideal measure we would like to target, this means that a chain targeting an approximation thereof would behave quite differently.  We shall make this more precise later. 
	The density of the kernel derivative $\Pder{\mu}{\rho}{f}(y)$---when it exists---can be heuristically interpreted as the change in the transition probabilities when $\mu$ is contaminated infinitesimally by a point mass $\delta_y$ for all $y\in\Xspace$. In fact, using \eqref{eq:derivative}--\eqref{eq:derivativecentering} we informally see that 
	\begin{equation*}
		\Pder{\mu}{\rho}{f}(y)=\Pder{\mu}{\rho}{f}[\delta_y-\mu] = \int \Pder{\mu}{\rho}{f}(y') (\delta_y-\mu)(\dif y') = \frac{\dif}{\dif t} P_{\mu+t(\delta_y-\mu)}(\rho,f)\bigg|_{t=0}.
	\end{equation*}
	This is similar to the role played by influence functions in robust statistics, which describe variations in estimators when the data distribution is contaminated---see \cite{huber2009robust}. 
 	In the end of the section we investigate the differentiability of Hastings and Gibbs families. First, we develop some basic properties to build up our intuition and understanding of kernel derivatives. 
	For $k\in\n$, define recursively the $k$-iterated kernel via $P^k(x,f) = \int P(x,\dif y)P^{k-1}(y,f)$ for all $x\in\Xspace$ and measurable $f$. $P^k$ represents the $k$-steps transition probabilities of the (homogeneous) Markov chain associated with $P$. We use the conventions $P^0_\mu(\cdot,f):=f$ and $P^0_\mu(\rho,\cdot):=\rho$. 
    The next proposition supplies a recursive formula for the derivative of $P^k$. These recursions are really an expression of the Chapman--Kolmogorov equations. 
	\begin{proposition} \label{prop:iteratedderivative} 
		For any $k\geq 2$, the derivative in the invariant distribution of a differentiable $k$-iterated Markov kernel satisfy the formul\ae
		\begin{equation*}
			\Pderk{\mu}{\rho}{f}{k}
			= \sum_{j=0}^{k-1} \Pder{\mu}{P^{k-j-1}_\mu(\rho,\cdot)}{P^j_\mu(\cdot,f)}.
		\end{equation*}
        provided $P^{j}_\mu(\rho,\cdot)\in\cal{D}_\mu(P)$ for all $j\leq k$.
	\end{proposition}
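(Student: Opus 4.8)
The plan is to argue by induction on $k$, the engine being the Chapman--Kolmogorov identity $P^k_\mu=P_\mu\circ P^{k-1}_\mu$: for the interpolating curve $\mu_t:=\mu+t(\nu-\mu)$ one has
\[
P^k_{\mu_t}(\rho,f)=\int\rho(\dif x)\int P_{\mu_t}(x,\dif y)\,P^{k-1}_{\mu_t}(y,f)=(\rho P_{\mu_t})\bigl(P^{k-1}_{\mu_t}(\cdot,f)\bigr).
\]
Differentiating at $t=0$ forces us to vary \emph{both} occurrences of $\mu_t$ on the right, which yields a sum of two contributions. I would make this precise by inserting the mixed quantity $I_t:=(\rho P_{\mu_t})\bigl(P^{k-1}_\mu(\cdot,f)\bigr)=P_{\mu_t}\bigl(\rho,P^{k-1}_\mu(\cdot,f)\bigr)$ and writing
\[
\frac{P^k_{\mu_t}(\rho,f)-P^k_\mu(\rho,f)}{t}=\frac{I_t-P^k_\mu(\rho,f)}{t}+\frac{(\rho P_{\mu_t})\bigl(P^{k-1}_{\mu_t}(\cdot,f)-P^{k-1}_\mu(\cdot,f)\bigr)}{t}.
\]
The induction would simultaneously establish that $P^k_\cdot(\rho,\cdot)$ is differentiable at $\mu$ and that the displayed formula holds.

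For the first summand, since $\rho=P^0_\mu(\rho,\cdot)\in\rhodomainP{\mu}$ by hypothesis and $P^{k-1}_\mu(\cdot,f)\in\BVfunX$ --- which is what makes the right-hand side of the claimed identity meaningful in the first place, and which holds whenever $P_\mu$ preserves $\BVfunX$ (e.g.\ under a drift $P_\mu V\le cV$; otherwise this should be added to the standing hypotheses) --- \cref{def:derivative} gives that $(I_t-P^k_\mu(\rho,f))/t\to\Pder{\mu}{\rho}{P^{k-1}_\mu(\cdot,f)}[\nu-\mu]$, which is precisely the $j=k-1$ summand of the claimed sum.

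For the second summand, I would first trade $\rho P_{\mu_t}$ for $\rho P_\mu$ at the cost of a cross term,
\[
\frac{(\rho P_{\mu_t})\bigl(P^{k-1}_{\mu_t}(\cdot,f)-P^{k-1}_\mu(\cdot,f)\bigr)}{t}=\frac{P^{k-1}_{\mu_t}(\rho P_\mu,f)-P^{k-1}_\mu(\rho P_\mu,f)}{t}+\Bigl(\rho\,\tfrac{P_{\mu_t}-P_\mu}{t}\Bigr)\bigl(P^{k-1}_{\mu_t}(\cdot,f)-P^{k-1}_\mu(\cdot,f)\bigr).
\]
The first piece converges by the induction hypothesis applied to $P^{k-1}$ with initial distribution $\rho P_\mu$ and test function $f$ --- whose inputs $P^i_\mu(\rho P_\mu,\cdot)=P^{i+1}_\mu(\rho,\cdot)\in\rhodomainP{\mu}$ for $i\le k-1$ are exactly the assumed conditions with indices $1,\dots,k$ --- giving $\Pderk{\mu}{\rho P_\mu}{f}{k-1}[\nu-\mu]=\sum_{i=0}^{k-2}\Pder{\mu}{P^{k-i-2}_\mu(\rho P_\mu,\cdot)}{P^i_\mu(\cdot,f)}[\nu-\mu]$; using the semigroup relation $P^{k-i-2}_\mu(\rho P_\mu,\cdot)=P^{k-i-1}_\mu(\rho,\cdot)$ and relabelling $j=i$, this is $\sum_{j=0}^{k-2}\Pder{\mu}{P^{k-j-1}_\mu(\rho,\cdot)}{P^j_\mu(\cdot,f)}[\nu-\mu]$, i.e.\ the summands $j=0,\dots,k-2$. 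Combined with the $j=k-1$ summand from the first part, this is the full sum. The base case $k=2$ is the same computation with $P^{k-1}$ replaced by $P$, so the induction hypothesis is vacuous there.

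The only genuinely delicate point is that the cross term $\bigl(\rho\,\tfrac{P_{\mu_t}-P_\mu}{t}\bigr)\bigl(P^{k-1}_{\mu_t}(\cdot,f)-P^{k-1}_\mu(\cdot,f)\bigr)$ vanishes as $t\to0$ --- this is the substance of the ``sum rule'' for differentiating a product of two $\mu$-dependent factors. It would follow from (i) $\Vnorm{\rho\,(P_{\mu_t}-P_\mu)/t}$ staying bounded as $t\downarrow0$, a mild continuity property of the kernel derivative, and (ii) $\Vnormf{P^{k-1}_{\mu_t}(\cdot,f)-P^{k-1}_\mu(\cdot,f)}\to0$, i.e.\ continuity of $t\mapsto P^{k-1}_{\mu_t}$ in $V$-norm along the curve, itself a consequence of differentiability together with enough uniformity. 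I expect this uniformity bookkeeping, rather than the algebra of the recursion or the reindexing, to be where the real work lies; everything else reduces to the semigroup identity $P^k=P\circ P^{k-1}$, linearity of $\eta\mapsto P_{\mu_t}(\eta,g)$, and \cref{def:derivative}.
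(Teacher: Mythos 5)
Your proposal follows essentially the same route as the paper's proof: the Chapman--Kolmogorov factorisation $P^k_{\mu_t}(\rho,f)=(\rho P_{\mu_t})(P^{k-1}_{\mu_t}(\cdot,f))$, a product-rule split of the $t$-derivative into the two contributions $\Pder{\mu}{\rho}{P^{k-1}_{\mu}(\cdot,f)}$ and $\Pderk{\mu}{P_\mu(\rho,\cdot)}{f}{k-1}$, and an unrolling of the resulting recursion. If anything you are more careful than the paper, which applies the sum rule formally without isolating the cross term $\bigl(\rho\,\tfrac{P_{\mu_t}-P_\mu}{t}\bigr)\bigl(P^{k-1}_{\mu_t}(\cdot,f)-P^{k-1}_\mu(\cdot,f)\bigr)$ or the implicit requirement that $P^{k-1}_\mu(\cdot,f)\in\BVfunX$; your flagging of these as the points needing uniformity is accurate and does not change the substance of the argument.
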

	\begin{proof} Let $\rho\in\Pdspace$ and $f\in\BVfunX$, we have, $with \mu_t = \mu + t(\nu - \mu)$: 
		\begin{align*}
			\frac{\dif }{\dif t}P^k_{\mu+t(\nu-\mu)}(\rho,f)\bigg|_{t=0} &=  \frac{\dif }{\dif t}\int P_{\mu+t(\nu-\mu)}(\rho,\dif z)P^{k-1}_{\mu_t}(z,f)\bigg|_{t=0}  \\
			&= \frac{\dif }{\dif t} P_{\mu+t(\nu-\mu)}(\rho,P^{k-1}_{\mu}(\cdot,f))\bigg|_{t=0} +  \frac{\dif }{\dif t} P_{\mu+t(\nu-\mu)}^{k-1}(P_\mu(\rho,\cdot),f)\bigg|_{t=0} \\
			&= \Pder{\mu}{\rho}{P^{k-1}_{\mu}(\cdot,f)}[\nu-\mu] + \Pderk{\mu}{P_\mu(\rho,\cdot)}{f}{k-1}[\nu-\mu]       
		\end{align*}
		therefore, exploiting this recursion yields,
		\begin{align*}
			\Pderk{\mu}{\rho}{f}{k} &= \Pder{\mu}{\rho}{P^{k-1}_{\mu}(\cdot,f)} + \Pderk{\mu}{P_\mu(\rho,\cdot)}{f}{k-1} 
			= \sum_{j=0}^{k-1} \Pder{\mu}{P^{k-j-1}_\mu(\rho,\cdot)}{P^j_\mu(\cdot,f)}.
		\end{align*}
	\end{proof}    
	Ergodicity can give us some information on how the derivative of iterated kernels behave when $k\rightarrow \infty$. A kernel derivative quantifies variations in transition probabilities $P^k_\mu(\rho,f)$ when the invariant changes is perturbed infinitesimally by $\chi:=\nu-\mu$. Under ergodicity for large $k$ we will have $P^k_\mu(\rho,f)\approx \mu(f)$, hence, intuitively, in the limit the kernel derivative ought to be $\chi(f)$. Define
	\begin{equation*}
		\neigh{\mu}{\nu}:=\{\mu_t\in\Pspace: \mu_t=(1-t)\mu+t\nu, t\in[0,1]\}.
	\end{equation*}
	This set is known as \textit{contamination neighbourhood} in robust statistics \cite{huber2009robust}, although it is not a neighbourhood in the sense of the total variation or the weak topology. For some starting distribution $\rho$, say that a Markov kernel is ergodic uniformly in $\neigh{\mu}{\nu}$ if $\sup_{t\in[0,1]}\Vnorm{P^k_{\mu_t}(\rho,\cdot)-\mu_t}\rightarrow 0$ as $k\to\infty$. We note that this is a definition of ergodicity uniform over a collection of invariant distributions and is quite distinct from the notion of uniform ergodicity of an individual Markov kernel.
	
	\begin{proposition}
		For all starting distributions $\rho\in\rhodomainP{\mu}$ for which the Markov kernel $P$ is ergodic uniformly in $\neigh{\mu}{\nu}$,
		\begin{equation*}
			\lim_{k\rightarrow\infty} \Pderk{\mu}{\rho}{f}{k}[\nu-\mu] = [\nu-\mu](f). 
		\end{equation*}
	\end{proposition}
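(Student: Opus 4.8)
The plan is to expand $\Pderk{\mu}{\rho}{f}{k}[\nu-\mu]$ using \cref{prop:iteratedderivative}, rewrite the result as a telescoping series built from a single-step kernel derivative, and identify its limit. Write $\chi:=\nu-\mu$. By \cref{prop:iteratedderivative} (applied to the starting distributions at hand) and the reindexing $j\mapsto i=k-1-j$,
\[
\Pderk{\mu}{\rho}{f}{k}[\chi]=\sum_{i=0}^{k-1}\Pder{\mu}{P^{i}_\mu(\rho,\cdot)}{P^{k-1-i}_\mu(\cdot,f)}[\chi].
\]
Two reductions simplify each summand. First, $t\mapsto P_{\mu_t}(\rho',c)\equiv c$ for any constant $c$, so $\Pder{\mu}{\rho'}{\cdot}[\chi]$ annihilates constants; since $\mu P^{k-1-i}_\mu=\mu$, in each summand one may replace $P^{k-1-i}_\mu(\cdot,f)$ by $P^{k-1-i}_\mu(\cdot,f)-\mu(f)$. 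Second, differentiating the invariance identity $\mu_t=\mu_tP_{\mu_t}$ at $t=0$ and splitting the $t$-dependence between the leading measure and the invariant-distribution subscript gives $\chi(f)=\chi(P_\mu f)+\Pder{\mu}{\mu}{f}[\chi]$, i.e.\ $\Pder{\mu}{\mu}{f}[\chi]=\chi\big((\Id-P_\mu)f\big)$ for every $f\in\BVfunX$; the same computation for $P^k$ gives $\Pderk{\mu}{\mu}{f}{k}[\chi]=\chi\big((\Id-P^k_\mu)f\big)$, which already $\to\chi(f)$ by ergodicity since $\chi(P^k_\mu f)\to 0$.

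Next I would pass to the limit termwise. Fix $j\geq 0$; the summand with $i=k-1-j$ is $\Pder{\mu}{P^{k-1-j}_\mu(\rho,\cdot)}{P^{j}_\mu(\cdot,f)-\mu(f)}[\chi]$, whose leading measure $P^{k-1-j}_\mu(\rho,\cdot)$ converges to $\mu$ as $k\to\infty$ — this is exactly the $t=0$ instance of the uniform-ergodicity hypothesis. Using continuity (indeed linearity) of $\rho'\mapsto\Pder{\mu}{\rho'}{h}[\chi]$ in the leading measure together with the invariance identity above, this summand tends to $\Pder{\mu}{\mu}{P^{j}_\mu(\cdot,f)}[\chi]=\chi\big((\Id-P_\mu)P^{j}_\mu f\big)=\chi(P^{j}_\mu f)-\chi(P^{j+1}_\mu f)$. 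Summing these (telescoping) limits over $j\geq 0$ yields $\chi(f)-\lim_{j}\chi(P^{j}_\mu f)=\chi(f)=[\nu-\mu](f)$, again using $\chi(P^{j}_\mu f)\to 0$.

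The step I expect to be the main obstacle is the interchange of $\lim_{k\to\infty}$ with the sum over $i$, which ranges over $k$ terms: unlike a fixed finite sum, uniform convergence of a sequence of functions does not transmit to their derivatives, and that is precisely the pathology one must exclude. The natural remedy is dominated convergence for series. Bounding the generic summand by $M_{P^{i}_\mu(\rho,\cdot)}\,\Vnorm{\chi}\,\Vnormf{P^{k-1-i}_\mu(\cdot,f)-\mu(f)}$ — via the mean-value/Lipschitz control on a single-step kernel derivative, e.g.\ \cref{cor:MHboundedder,cor:Gibbsboundedder} for the Hastings and Gibbs families — one needs (i) uniform boundedness of the constants $M_{P^{i}_\mu(\rho,\cdot)}$ along the chain and (ii) a summable (geometric) ergodicity estimate $\sum_{j\geq0}\Vnormf{P^{j}_\mu(\cdot,f)-\mu(f)}<\infty$; together these furnish a $k$-uniform summable dominating sequence and legitimise the interchange, leaving $\lim_{k}\Pderk{\mu}{\rho}{f}{k}[\chi]=\sum_{j\geq0}\big(\chi(P^{j}_\mu f)-\chi(P^{j+1}_\mu f)\big)=[\nu-\mu](f)$. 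Equivalently, one may split $\rho=\mu+(\rho-\mu)$ at the outset: the $\mu$-part gives $\chi\big((\Id-P^k_\mu)f\big)\to\chi(f)$ exactly, and only the $0$-mass remainder $\Pderk{\mu}{\rho-\mu}{f}{k}[\chi]$ must be shown to vanish, which the same sum estimate delivers with $\Vnorm{P^{i}_\mu(\rho,\cdot)-\mu}\to0$ supplying the decay.
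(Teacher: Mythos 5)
Your argument takes a genuinely different route from the paper's, and in doing so it drifts away from the stated hypotheses. The paper's proof is a two-line limit exchange: it writes the derivative as $\lim_{t\to0}\bigl(P^k_{\mu+t(\nu-\mu)}(\rho,f)-P^k_\mu(\rho,f)\bigr)/t$, swaps $\lim_{k\to\infty}$ with $\lim_{t\to0}$, and uses $P^k_{\mu_t}(\rho,f)\to\mu_t(f)$ to conclude; the \emph{only} role of the hypothesis of ergodicity uniform over the contamination neighbourhood $\neigh{\mu}{\nu}$ is to justify that exchange, since the inner convergence must be uniform in $t$ near $0$. Your telescoping expansion via \cref{prop:iteratedderivative} never uses that uniformity (you invoke only the $t=0$ instance, $P^k_\mu(\rho,\cdot)\to\mu$), which is a signal that you are proving a different theorem: one whose hypotheses are the extra conditions you introduce along the way rather than the ones in the statement.

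Concretely, the gaps relative to the proposition as stated are: (a) \cref{prop:iteratedderivative} requires $P^j_\mu(\rho,\cdot)\in\rhodomainP{\mu}$ for all $j\le k$, which is not assumed (only $\rho\in\rhodomainP{\mu}$ is); (b) the termwise limit step needs the map $\rho'\mapsto\Pder{\mu}{\rho'}{h}[\chi]$ to be \emph{continuous} with respect to $V$-norm convergence of the leading measure --- linearity alone does not give this, and no such continuity is established in the paper; (c) the interchange of $\lim_k$ with a sum of $k$ terms requires, as you yourself note, uniformly bounded Lipschitz constants $M_{P^i_\mu(\rho,\cdot)}$ and a summable (geometric) ergodicity estimate $\sum_j\Vnormf{P^j_\mu(\cdot,f)-\mu(f)}<\infty$, neither of which follows from the qualitative uniform-ergodicity hypothesis given. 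Under those strengthened assumptions your telescoping argument is sound and arguably more informative (it exhibits the limit as $\sum_j\chi(P^j_\mu f)-\chi(P^{j+1}_\mu f)$ and isolates exactly where quantitative control is needed), but as a proof of the proposition under its own hypotheses it does not close. The intended argument is simply the $k$/$t$ limit exchange licensed by uniformity over $\neigh{\mu}{\nu}$.
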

	\begin{proof}
		We compute directly
		\begin{align*}
			\lim_{k\rightarrow\infty} \Pderk{\mu}{\rho}{f}{k}[\nu-\mu] &= \lim_{k\rightarrow\infty} \lim_{t\rightarrow0} \frac{P^k_{\mu+t(\nu-\mu)}(\rho,f)-P_\mu^k(\rho,f)}{t} \\
			&= \lim_{t\rightarrow0} \lim_{k\rightarrow\infty}  \frac{P^k_{\mu+t(\nu-\mu)}(\rho,f)-P_\mu^k(\rho,f)}{t} \\
			&=
			\lim_{t\rightarrow0}  \frac{(\mu+t(\nu-\mu))(f)-\mu(f)}{t} = [\nu-\mu](f),
		\end{align*}
		where the limit exchange is justified by the uniformity across the neighbourhood. 
	\end{proof}
	Hence, informally, its density in the limit is given by $\lim_k \Pderk{\mu}{\rho}{f}{k}(y)=f(y)-\mu(f)$.
	It is interesting to point out another connection between derivatives in the invariant distributions and ergodicity as follows. $\Pder{\mu}{\mu}{f}$ is understood as the infinitesimal variation of the transition probability $P_\mu(X,f)$ when the invariant measure is perturbed with a 0--mass measure, the starting point $X$ is chosen at random with the (non-perturbed) original distribution $\mu$. The density of the derivative takes a very natural expression in this case that does not require further computations. Denote $\mu_t := (1-t)\mu + t\nu$. 
	Since
	\begin{equation*}
		P_{\mu_t}(\mu_t,f) = \mu_t(f),
	\end{equation*}
	and differentiating this expression w.r.t. $t$ at $t=0$, provided that makes sense, gives 
	\begin{equation*}
		\frac{\dif }{\dif t} P_{\mu_t}(\mu,f)\Bigg|_{t=0} + P_{\mu_t}(\chi,f)\Bigg|_{t=0} + t\frac{\dif }{\dif t} P_{\mu_t}(\chi,f)\Bigg|_{t=0} = \chi(f).
	\end{equation*}
	Therefore, $\frac{\dif }{\dif t} P_{\mu_t}(\mu,f)\big|_{t=0}=\chi(f-P_\mu f)$, which via \eqref{eq:derivative} and \eqref{eq:derivativecentering} imply 
	\begin{equation} \label{eq:Gener=Deriv}
		\Pder{\mu}{\mu}{f}(y) = f(y)-P_\mu(y,f).
	\end{equation}
	This relationship can also be obtained heuristically using our interpretation of derivative in the invariant measure at a point $y$ as the variation in conditional expectation caused by a point mass $\delta_y$. We have then, up to constants,
	\begin{equation*}
		\Pder{\mu}{\mu}{f}(y) = \lim_{t\rightarrow 0} \frac{P_{\mu+t(\delta_y-\mu)}(\mu,f)-\mu(f)}{t},
	\end{equation*}
	but  $P_{\mu_t}(\mu,f)=\mu_t-tP_{\mu_t}(y,f)+tP_{\mu_t}(\mu,f)$ so that $P_{\mu_t}(\mu,f)=(\mu_t-tP_{\mu_t}(y,f))/(1-t)$. Hence, the right hand side of the expression above becomes
	\begin{align*}
		\lim_{t\rightarrow 0}\frac{\mu_t(f)-tP_{\mu_t}(y,f)-(1-t)\mu(f)}{(1-t)t} = \lim_{t\rightarrow 0}\frac{f(y) - P_\mu f(y)}{1-t} =  f(y)-P_\mu(y,f).
	\end{align*}
	This suggests an interesting connection with discrete-time generators and Foster--Lyapunov type arguments. One way to exploit this identity is simply by restating classical Markov Chains regularity conditions, very often expressed and verified in terms of drifts. The result below is a (partial) restatement of \cite[Theorem 13.0.1]{meyntweedie2012}.
	\begin{theorem}[Ergodic Theorem with kernel derivatives]
		Suppose that $\{X_n\}_{n\in\mathbb{N}}$ is an aperiodic, $\mu$-irreducible Markov Chain with transition probabilities $P_\mu$ and invariant distribution $\mu$. If there exists some petite set $C$, some $b<\infty$ and a non-negative finite function $f$ bounded on $C$ such that 
		\begin{equation*}
			-\Pder{\mu}{\mu}{f}(x)\leq -1 +b 1_C(x), \quad x\in\Xspace
		\end{equation*}
		whenever such kernel derivative exists, then for all $x\in\Xspace$, as $k\rightarrow\infty$,
			$\tvnorm{P_\mu^k(x,\cdot)-\mu}\rightarrow 0$.
	\end{theorem}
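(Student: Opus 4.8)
The plan is to recognise the hypothesis as the classical Foster--Lyapunov drift condition in disguise and then to quote \cite[Theorem 13.0.1]{meyntweedie2012} essentially verbatim. The bridge is identity \eqref{eq:Gener=Deriv}: wherever it exists, the density of the kernel derivative at $\mu$ in the direction of $\mu$ is $\Pder{\mu}{\mu}{f}(y) = f(y) - P_\mu(y,f)$. Hence $-\Pder{\mu}{\mu}{f}(x) = P_\mu f(x) - f(x)$, and the assumed bound $-\Pder{\mu}{\mu}{f}(x) \le -1 + b 1_C(x)$ reads
\begin{equation*}
  P_\mu f(x) \le f(x) - 1 + b 1_C(x), \qquad x \in \Xspace,
\end{equation*}
which is precisely the drift condition (V3) of \cite{meyntweedie2012} with Lyapunov function $V := f$.

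First I would place things inside the setting of \cref{sec:derivative} and make the appeal to \eqref{eq:Gener=Deriv} rigorous. Since $f\ge 0$ is finite, setting $V := 1 + f$ gives $V:\Xspace\to[1,\infty)$ with $f\in\BVfunX$, so $\Pder{\mu}{\mu}{f}$ is meaningful in the sense of \cref{def:derivative}. The statement requires the drift bound only ``whenever such kernel derivative exists'', so it suffices to show (i) that $\Pder{\mu}{\mu}{f}(x)$ exists and equals $f(x) - P_\mu f(x)$ at every $x$ with $P_\mu f(x) < \infty$, and (ii) that points with $P_\mu f(x) = \infty$ do not occur. Claim (i) is exactly the computation displayed just above \eqref{eq:Gener=Deriv}, which one makes rigorous given mild regularity (continuity of $t\mapsto P_{\mu_t}(\delta_x,f)$ at $t=0$, i.e.\ $\delta_x\in\rhodomainP{\mu}$): with $\mu_t=(1-t)\mu+t\delta_x$ one has $P_{\mu_t}(\mu_t,f)=\mu_t(f)$, the rearrangement $P_{\mu_t}(\mu,f)=(\mu_t(f)-tP_{\mu_t}(\delta_x,f))/(1-t)$ gives $\tfrac{\dif}{\dif t}P_{\mu_t}(\mu,f)\big|_{t=0}=f(x)-P_\mu(x,f)$, and the centering \eqref{eq:derivativecentering} applied to the integral representation \eqref{eq:derivativefirstvariation} with $\rho=\mu$ identifies this with $\Pder{\mu}{\mu}{f}(x)$. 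For (ii), at any $x$ with $P_\mu f(x)=\infty$ the drift inequality above has infinite left side and finite right side and so fails; hence no such $x$ exists, $V=f$ is finite with $P_\mu V$ finite everywhere, and (V3) holds globally.

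With the drift condition in hand, the rest is the Meyn--Tweedie machinery. Since $\{X_n\}$ is $\mu$-irreducible and aperiodic, $C$ is petite, $V=f$ is non-negative, finite and bounded on $C$, and $P_\mu V \le V - 1 + b1_C$, \cite[Theorem 13.0.1]{meyntweedie2012} shows the chain is positive Harris recurrent with an invariant probability measure $\pi$ satisfying $\tvnorm{P_\mu^k(x,\cdot)-\pi}\to 0$ for every $x\in\Xspace$. As $\mu$ is assumed invariant for $P_\mu$ and a $\mu$-irreducible positive recurrent chain has a unique invariant probability measure, $\pi=\mu$, and the conclusion $\tvnorm{P_\mu^k(x,\cdot)-\mu}\to 0$ follows for all $x$.

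The only genuinely delicate point is the bookkeeping around \eqref{eq:Gener=Deriv}: matching a hypothesis phrased only in terms of the points where the kernel derivative happens to exist, for an $f$ that is merely non-negative, finite and bounded on $C$, to the finite-everywhere global drift inequality that \cite[Theorem 13.0.1]{meyntweedie2012} expects. Once one is satisfied that ``$\Pder{\mu}{\mu}{f}(x)$ exists'' amounts to ``$P_\mu f(x)<\infty$'' and that the drift bound then forces $P_\mu f$ finite throughout $\Xspace$, the remainder is a direct quotation of standard results.
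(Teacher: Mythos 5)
Your proposal is correct and is exactly the argument the paper intends: the paper presents this theorem as ``a (partial) restatement of \cite[Theorem 13.0.1]{meyntweedie2012}'', obtained by using the identity \eqref{eq:Gener=Deriv} to rewrite the hypothesis as the classical drift condition $P_\mu f \leq f - 1 + b1_C$ and then invoking that theorem, which is precisely what you do. Your additional bookkeeping on where the kernel derivative exists versus where $P_\mu f$ is finite is a reasonable tightening of a point the paper leaves implicit.
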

	Many other regularity conditions can be expressed in terms of drift conditions, and therefore in terms of $\Pder{\mu}{\mu}{f}$. 
	The main applications of kernel derivatives in this work will be however in the context of comparing Markov chains with different invariant distributions within the same family, and in particular in what we will refer to as mean value inequalities. First, for illustration, we compute the derivative in the invariant distribution for Markov kernels in the Hastings and Gibbs families of \cref{ex:MHtypekernels,ex:Gibbsfamily}. The next examples also illustrates the decomposition \eqref{eq:derivativesdec}.
	
	\begin{example}[Derivatives of Hastings kernels] \label{ex:MHtypekernelsdiff}
		Consider the Hastings family of \cref{ex:MHtypekernels}  with $x\mapsto g(x)$ twice differentiable, non-decreasing, and with $g'$ and $g''$ bounded. This class of kernel includes, for instance, the Barker form of the acceptance probability ($g(x)=x/(1+x)$), but not the  Metropolis-Hastings form  ($g(x)=\min(1,x)$). Furthermore, take the proposal distribution density to be bounded in both arguments, the indexed invariant distributions to have a positive densities with finite $V$-moments.
		Let $\mathcal{W}_{H}:=\{\rho\in\Pdspace:\rho/\mu^2 \text{ is bounded for all indexed $\mu$}\}$ denote the set of warm-start initial distributions, which essentially requires that $\rho$ has lighter tails than $\mu^2$. 
		\begin{proposition} \label{prop:MHderivatives} 
			The Hastings kernel is differentiable in the invariant distribution at $\mu$ for all $\rho\in\mathcal{W}_{H}\cup \{\delta_x\}_{x\in\Xspace}$. 
			For all $\rho\in\mathcal{W}_{H}$, $\Pder{\mu}{\rho}{\cdot}$ admits a density given for all $f\in\BVfunX$ by
			\begin{align*}  \label{eq:MHderivativedensity}
				\Pder{\mu}{\rho}{f}(y) =  
				\int(f(y)-f(z))&\frac{\rho(z)}{\mu(z)}g'(r_\mu(z,y))q(y,\dif z) \\
				&- \frac{\rho(y)}{\mu(y)^2}\int (f(z)-f(y))q(z,y)g'(r_\mu(y,z))\mu(\dif z) \numberthis.
			\end{align*}   
			$\Pder{\mu}{x}{\cdot}$ admits instead a density part and a singular part, given for all $f\in\BVfunX$ by
			\begin{align} \label{eq:MHderivativex}
				\Pder{\mu}{x}{f}(y) &= 
				(f(y)-f(x))\frac{g'(r_{\mu}(x,y))q(y,x)}{\mu(x)}\quad \text{and} \nonumber\\
				\Pderorth{\mu}{x}{f}(y)&= -\frac{1}{\mu(y)^2}\int (f(z)-f(y))q(z,y)g'(r_{\mu}(y,z))\mu(\dif z).
			\end{align}
		\end{proposition}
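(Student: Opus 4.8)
The plan is to differentiate the real-valued map $t\mapsto P_{\mu_t}(\rho,f)$ at $t=0$ along the segment $\mu_t=(1-t)\mu+t\nu$, $t\in[0,1]$, directly from the definition \eqref{eq:MHtypekernels}, handling first $\rho=\delta_x$ and then deducing the warm-start case $\rho\in\mathcal{W}_H$ by integrating in the starting point. For $\rho=\delta_x$, rearranging \eqref{eq:MHtypekernels} gives $P_{\mu_t}(x,f)=f(x)+\int(f(y)-f(x))\,g(r_{\mu_t}(x,y))\,Q(x,\dif y)$, so the only $t$-dependence sits inside $g(r_{\mu_t}(x,y))$. On $\{q(x,y)>0\}$ one has $r_{\mu_t}(x,y)=\tfrac{q(y,x)}{q(x,y)}\tfrac{\mu_t(y)}{\mu_t(x)}$ with $\mu_t(z)=\mu(z)+t(\nu(z)-\mu(z))$, and the quotient rule followed by the chain rule (using that $g$ is $C^1$ with bounded derivative) yields
\[
\frac{\dif}{\dif t}\,g(r_{\mu_t}(x,y))\Big|_{t=0}=g'(r_\mu(x,y))\,\frac{q(y,x)}{q(x,y)}\left(\frac{\nu(y)-\mu(y)}{\mu(x)}-\frac{\mu(y)\bigl(\nu(x)-\mu(x)\bigr)}{\mu(x)^2}\right),
\]
while on the complement (and wherever the ratio is frozen at $1$ by convention) this derivative vanishes.

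Multiplying by $(f(y)-f(x))q(x,y)$ and integrating in $y$, the factors $q(x,y)$ cancel and the two summands separate cleanly. The term carrying $\nu(y)-\mu(y)$, integrated against $\lambda(\dif y)$, has the form $\int(\nu(y)-\mu(y))\,a(y)\,\lambda(\dif y)$ with $a(y)=(f(y)-f(x))g'(r_\mu(x,y))q(y,x)/\mu(x)$, i.e.\ the density $\Pder{\mu}{x}{f}(\cdot)$ of \eqref{eq:MHderivativex}; the term carrying $\nu(x)-\mu(x)$ is a point evaluation, $(\nu(x)-\mu(x))\cdot\bigl(-\tfrac{1}{\mu(x)^2}\int(f(z)-f(x))q(z,x)g'(r_\mu(x,z))\mu(\dif z)\bigr)$, which is exactly the singular contribution $\delta_x\bigl((\nu-\mu)\cdot\Pderorth{\mu}{x}{f}\bigr)$ of \eqref{eq:derivativesdec} with $\Pderorth{\mu}{x}{f}$ as stated (here one uses $\mu(y)\lambda(\dif y)=\mu(\dif y)$ to rewrite the inner integral). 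One may then subtract $\mu(a)$ from $a$ to enforce the centring convention $\mu(\Pder{\mu}{x}{f}(\cdot))=0$; this changes nothing since $\nu-\mu$ has zero mass, so \eqref{eq:MHderivativex} is a legitimate representative.

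To make the interchange of $\tfrac{\dif}{\dif t}\big|_{t=0}$ with $\int\cdot\,Q(x,\dif y)$ rigorous I would dominate the difference quotients: by the mean value theorem $\tfrac1t|g(r_{\mu_t}(x,y))-g(r_\mu(x,y))|\le\|g'\|_\infty\,\tfrac1t|r_{\mu_t}(x,y)-r_\mu(x,y)|$, and the computation above bounds the latter, for $t$ small, by a constant (depending on $\mu(x)$) times $\tfrac{q(y,x)}{q(x,y)}\bigl(|\nu(y)-\mu(y)|\mu(x)+\mu(y)|\nu(x)-\mu(x)|\bigr)$. After multiplying by $|f(y)-f(x)|q(x,y)\le\Vnormf{f}(V(y)+V(x))q(x,y)$ and using boundedness of $q$ together with the finiteness of the $V$-moments of $\mu$ and $\nu$, the resulting bound is $\lambda$-integrable in $y$, so dominated convergence yields differentiability at $\delta_x$ and the stated formulas.

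For $\rho\in\mathcal{W}_H$, write $P_{\mu_t}(\rho,f)=\int P_{\mu_t}(x,f)\,\rho(\dif x)$ and differentiate under this integral as well. The same mean value bound shows the $x$-integrand of the difference quotient is dominated, up to $\rho(\dif x)=\rho(x)\lambda(\dif x)$, by $C\,\Vnormf{f}\bigl(\tfrac{1+V(x)}{\mu(x)}+\tfrac{(\nu(x)+\mu(x))(1+V(x))}{\mu(x)^2}\bigr)$, and this is precisely where the warm-start hypothesis enters: $\rho/\mu^2$ bounded gives $\rho/\mu\le C\mu$ and $\rho/\mu^2\le C$, so the bound integrates against $\lambda$ to a finite quantity by the finiteness of $\int V\,\dif\mu$ and $\int V\,\dif\nu$. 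Granting the interchange, Fubini reorders the $x$- and $y$-integrations: the piece carrying $\nu(y)-\mu(y)$ becomes $\int(\nu(y)-\mu(y))\bigl[\int\rho(x)\,\Pder{\mu}{x}{f}(y)\,\lambda(\dif x)\bigr]\lambda(\dif y)$, which is the first term of the stated density once $q(y,x)\lambda(\dif x)$ is written as $Q(y,\dif x)$, while the singular piece $\int\rho(x)(\nu(x)-\mu(x))\Pderorth{\mu}{x}{f}(x)\lambda(\dif x)$, after relabelling $x\to y$, becomes the second, $-\tfrac{\rho(y)}{\mu(y)^2}$-term; since $\rho$ has a density the two fuse into a genuine density $\Pder{\mu}{\rho}{f}(\cdot)$, and a direct change of variables shows the two terms cancel under $\int\cdot\,\mu(\dif y)$, so the centring $\mu(\Pder{\mu}{\rho}{f}(\cdot))=0$ holds automatically. \textbf{The main obstacle} is this last domination step for $\mathcal{W}_H$: tracking the powers of $\mu(x)$ in the denominators of the $\delta_x$-derivative and checking that the condition $\rho/\mu^2$ bounded is exactly strong enough to make the dominating function $\lambda$-integrable; everything else is the quotient/chain rule and careful bookkeeping of which summand is the ``density'' and which the ``singular'' part.
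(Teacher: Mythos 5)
Your proposal is correct and follows essentially the same route as the paper: differentiate $t\mapsto g(r_{\mu_t})$ via the quotient/chain rule, read off the $\chi(y)$-term as the density part and the $\chi(x)$-term as the singular part, and justify the interchange by dominating the difference quotients using boundedness of $g'$ and $q$, the $V$-moments of the invariants, and the warm-start condition (the paper just computes for general $\rho$ at once rather than doing $\delta_x$ first and integrating). The one point to tighten is that the $t$-uniform domination has $\mu_t(x)$, not $\mu(x)$, in the denominator, so you need $\mu_t(x)\geq\min(\mu(x),\nu(x))$ and hence the warm-start bound for $\nu$ as well as $\mu$ --- which is available since $\mathcal{W}_H$ requires $\rho/\mu^2$ bounded for \emph{all} indexed invariants, exactly as the paper's appendix exploits.
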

		\begin{proof}
			If we can interchange differentiation and integration, one has, with $\chi:=\nu-\mu$, 
			\begin{align*}
				\frac{\dif }{\dif t}P_{\mu+t(\nu-\mu)}(\rho ,f)\bigg|_{t=0} 
				&= \int \int \rho(\dif u) f(w)Q(u,\dif w)g'(r_\mu(u,w)) \frac{\dif }{\dif t} r_{\mu_t}(u,w)\bigg|_{t=0} \\
				&- \int \rho(\dif u)f(u)\int Q(u,\dif w)g'(r_\mu(u,w)) \frac{\dif }{\dif t} r_{\mu_t}(u,w)\bigg|_{t=0} \\
				&= \int \int (f(w)-f(u))\frac{q(w,u)g'(r_\mu(u,w))}{\mu(u)}\rho(\dif u)\chi(\dif w) \\
				&- \int \int(f(w)-f(u))  \frac{q(w,u)g'(r_\mu(u,w))}{\mu(u)^2}\mu(w)\rho(\dif u)\chi(\dif u)\\ 
				&= \int \left[\int (f(w)-f(u)) \frac{q(w,u)g'(r_\mu(u,w))}{\mu(u)}\rho(\dif u)\right] \chi(\dif w) \\
				&- \int \left[\int (f(w)-f(u)) \frac{q(w,u)g'(r_\mu(u,w))\mu(w)}{\mu(u)^2} \lambda(\dif w)\right]\chi(u) \rho(\dif u).
			\end{align*}         
			Hence, using the expressions above and \eqref{eq:derivative} one identifies \eqref{eq:MHderivativedensity} and \eqref{eq:MHderivativex}.  Appendix \ref{app:technicalcondboundder} shows that the the limit exchange performed is justified under our assumptions.
		\end{proof}
		It is possible to find kernel derivatives with other assumptions on the starting distributions, on the invariant distributions, the proposal and the balancing function by adapting the arguments used above and in the relevant appendixes; the only  difficulty is justifying the limit exchange. Our use of $V$ notationally suggests the use of a $V$ that is a suitable Lyapunov function for the different distributions indexed by the family---see \cref{sec:applications}.  The requirement that $g$ is differentiable however cannot be immediately relaxed. Although this means that the result above does \emph{not} guarantee differentiability of Metropolis-Hastings kernels, we will be able to obtain results for that class via an approximation argument later---and we will find that the Metropolis-Hastings kernel is actually an example of non-differentiable but Lipschitz continuous mapping.
	\end{example}
	
	In the example below, a probability distribution with a numerical subscript denotes the marginal in the indicated coordinate of the underlying probability distribution, we write $y=(y_1,y_2)$ for a point $y\in\Xspace^2$, $\lambda^2$ for $\lambda\otimes\lambda$.
	\begin{example}[Derivatives of Gibbs kernels] \label{ex:Gibbstypekernelsdiff} 
		Consider the MCMC kernels of \cref{ex:Gibbsfamily}, with $\mu_{1|2}$ and $\mu_{2|1}$ admitting positive densities $\lambda$-almost everywhere. 
		Define the set $\mathcal{W}_G:=\{\rho\in\Pdspace:\rho_2/ \mu_2 \text{ is bounded for all indexed $\mu$}\}$.
		\begin{proposition} \label{prop:Gibbsderivatives}
			The Gibbs kernel $P_\cdot(\rho,\cdot)$ is differentiable in the invariant distribution at $\mu$ for all $\rho\in\mathcal{W}_{G}\cap \{\delta_x\}_{x\in\Xspace}$. For all $\rho\in\cal{W}_G$, the derivative $\Pder{\mu}{\rho}{\cdot}$ admits an integral representation, with its density given for all $f\in\mathbf{B}_{V}(\Xspace^2)$ by
			\begin{align*}  \label{eq:DSGibbsderivative}
				\Pder{\mu}{\rho}{f}(y) &=  \int  f(y_1,w_2)\frac{\rho(y_2)}{\mu_2(y_2)}\mu_{2|1}(y_1,\dif w_2) 
				- \int f(w_1,w_2)\frac{\rho(y_2)}{\mu_2(y_2)}\mu_{1|2}(y_2,\dif w_1) \mu_{2|1}(w_1,\dif w_2) \\
				&+ \int  f(y_1,y_2) \frac{\rho(\dif w_2)}{\mu_1(y_1)} \mu_{1|2}(w_2,y_1) 
				- \int f(y_1,w_2)\frac{\rho(\dif u_2)}{\mu_1(y_1)} \mu_{1|2}(u_2,y_1) \mu_{2|1}(y_1,\dif w_2) 
				\numberthis.
			\end{align*}
				$\Pder{\mu}{x}{\cdot}$ admits instead a density part and a singular part, given for all $f\in\mathbf{B}_{V}(\Xspace^2)$ by 
			\begin{align} \label{eq:Gibbsderivativex}
				\Pder{\mu}{x}{f}(y)&= f(y_1,y_2)\frac{\mu_{1|2}(x_2,y_1)}{\mu_1(y_1)} + \int f(y_1,w_2)\bigg(\frac{\mu_{2|1}(y_1,\dif w_2)}{\mu_2(x_2)}-\frac{\mu_{1|2}(x_2,y_1)\mu_{2|1}(y_1,\dif w_2)}{\mu_1(y_1)}\bigg) \nonumber\\
				\Pderorth{\mu}{x}{f}(y)&= - \int f(w_1,w_2)\frac{\mu_{1|2}(y_2,\dif w_1)\mu_{2|1}(w_1,\dif w_2)}{\mu_2(y_2)}
			\end{align}      
		\end{proposition}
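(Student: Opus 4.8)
The plan is to mimic the computation done for the Hastings family in \cref{prop:MHderivatives}: write down the map $t \mapsto P_{\mu_t}((x,x'),f)$ or $t \mapsto P_{\mu_t}(\rho,f)$ explicitly along the segment $\mu_t = \mu + t(\nu-\mu)$, differentiate under the integral sign at $t=0$, collect terms, and then read off the density part and (for $\rho=\delta_x$) the singular part by matching against \eqref{eq:derivative} and the decomposition \eqref{eq:derivativesdec}. The key preliminary step is to express the two conditional kernels $\mu_{1|2}$ and $\mu_{2|1}$ in terms of the joint density: writing $\mu(y_1,y_2)$ for $\dif\mu/\dif\lambda^2$ and $\mu_1,\mu_2$ for its marginals, one has $\mu_{1|2}(y_2,\dif w_1) = \frac{\mu(w_1,y_2)}{\mu_2(y_2)}\lambda(\dif w_1)$ and $\mu_{2|1}(y_1,\dif w_2) = \frac{\mu(y_1,w_2)}{\mu_1(y_1)}\lambda(\dif w_2)$. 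Since $\mu \mapsto \mu_1$ and $\mu \mapsto \mu_2$ are linear (they are just marginalisations), the only nonlinearity in $t$ enters through the reciprocals $1/\mu_{1,t}(y_1)$ and $1/\mu_{2,t}(x_2)$, whose derivatives at $t=0$ are $-\chi_1(y_1)/\mu_1(y_1)^2$ and $-\chi_2(x_2)/\mu_2(x_2)^2$ respectively, where $\chi = \nu-\mu$ and $\chi_i$ is the $i$-th marginal of $\chi$.

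Concretely, for $\rho \in \mathcal{W}_G$ I would substitute these representations into $P_{\mu_t}(\rho,f) = \int \rho(\dif u) \int \mu_{1|2,t}(u_2,\dif w_1)\int \mu_{2|1,t}(w_1,\dif w_2) f(w_1,w_2)$, obtaining an expression of the form $\int\!\int\!\int \rho(u_1,u_2) \frac{\mu_t(w_1,u_2)}{\mu_{2,t}(u_2)}\frac{\mu_t(w_1,w_2)}{\mu_{1,t}(w_1)} f(w_1,w_2)\,\lambda(\dif u)\lambda(\dif w_1)\lambda(\dif w_2)$; then I differentiate at $t=0$ via the product rule. The numerator factors $\mu_t(w_1,u_2)$ and $\mu_t(w_1,w_2)$ are linear, so contribute $\chi$-terms directly; the two denominator factors contribute the $-\chi_2(u_2)/\mu_2(u_2)^2$ and $-\chi_1(w_1)/\mu_1(w_1)^2$ terms. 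After regrouping, the $u_1$-integration collapses $\rho$ to its $u_2$-marginal $\rho_2$ wherever the integrand no longer depends on $u_1$; recognising $\rho_2(u_2)/\mu_2(u_2)$ (bounded precisely by the $\mathcal{W}_G$ hypothesis) and relabelling variables should produce exactly the four terms of \eqref{eq:DSGibbsderivative}. The centering identity $\mu(\Pder{\mu}{\rho}{f}(\cdot)) = 0$ is then checked separately by integrating the displayed density against $\mu$ and using $\chi(\mathbf{1})=0$-type cancellations — equivalently it follows automatically from differentiating $P_{\mu_t}(\rho,\mathbf{1}) \equiv 1$. For $\rho = \delta_x$ the same computation applies but now $\rho$ has no density, so the term in which the surviving $\chi$ is evaluated through $\rho = \delta_x$ cannot be written as $\int \chi(y)(\cdots)\lambda(\dif y)$; instead it takes the form $\chi(\{x\})$-type evaluation, i.e. $\rho((\nu-\mu)\cdot\Pderorth{\mu}{x}{f})$, which isolates $\Pderorth{\mu}{x}{f}(y)$ as the $-\int f(w_1,w_2)\mu_{1|2}(y_2,\dif w_1)\mu_{2|1}(w_1,\dif w_2)/\mu_2(y_2)$ term of \eqref{eq:Gibbsderivativex}, while the remaining terms (those where $\chi$ is integrated against a $\lambda$-density) give the density part $\Pder{\mu}{x}{f}(y)$.

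The main obstacle, exactly as flagged in \cref{ex:MHtypekernelsdiff}, is justifying the interchange of $\dif/\dif t$ and the triple integral — i.e.\ producing an integrable dominating function uniformly in a neighbourhood of $t=0$. Here one uses that $\mu_t(y_1,y_2) \geq (1-t)\mu(y_1,y_2) > 0$ and that $\mu_{i,t} \geq (1-t)\mu_i$, so the denominators are bounded below by a constant multiple of the unperturbed ones on $(0,t_0)$; combined with $f \in \mathbf{B}_V(\Xspace^2)$, the boundedness of $\rho_2/\mu_2$ (for $\rho \in \mathcal{W}_G$), and the assumed finiteness of the relevant $V$-moments of the indexed distributions, a dominated-convergence argument goes through — I would relegate this, as the authors do for the Hastings case, to an appendix paralleling Appendix \ref{app:technicalcondboundder}. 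A secondary technical point is handling the $\lambda$-null set on which the conditional densities may fail to be positive; since $\mu_{1|2},\mu_{2|1}$ are positive $\lambda$-a.e.\ by hypothesis, this set is negligible for all the integrals involved and does not affect the a.e.-defined density of the derivative.
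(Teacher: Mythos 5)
Your proposal is correct and follows essentially the same route as the paper's proof: rewrite the full conditionals in terms of the joint density and its marginals, differentiate $t\mapsto P_{\mu_t}(\rho,f)$ under the integral sign at $t=0$ (with the nonlinearity entering only through the reciprocal marginals, yielding the $-\chi_i/\mu_i^2$ terms), collapse the $u_1$-integration to $\rho_2$, match the resulting four terms against \eqref{eq:derivative} and \eqref{eq:derivativesdec}, and defer the domination argument for the limit exchange to an appendix. No substantive difference from the paper's argument.
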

		\begin{proof}
			We notice that we can just write the full conditionals in terms of $\mu$ as $\mu_{1|2}(x',y)=\mu(y,x')/\int \mu(z,x')\lambda(\dif z)$ and similarly for $\mu_{2|1}$, hence 
			\begin{equation*}
				P_\mu((x_1,x_2),f)= \int  f(w_1,w_2) \frac{\mu(w_1,x_2)}{\int \mu(z,x_2)\lambda(\dif z)}\frac{\mu(w_1,w_2)}{\int \mu(w_1,z)\lambda(\dif z)}\lambda^2(\dif w).
			\end{equation*}
			Then, by exploiting the fact that the transition probabilities do not depend on the first coordinate of the starting point we compute
                        
			\begin{align*}
				\frac{\dif }{\dif t}P_{\mu+t(\nu-\mu)}(\rho ,f)\bigg|_{t=0} &= \int \chi(\dif w_1,u_2)\rho_2(\dif u_2)  \frac{f(w_1,w_2)}{\mu_2(u_2)} \mu_{2|1}(w_1,\dif w_2) \\
				&- \int \chi(\dif z,u_2)\rho_2(\dif u_2)  \frac{f(w_1,w_2)}{\mu_2(u_2)} \mu_{1|2}(u_2,\dif w_1) \mu_{2|1}(w_1,\dif w_2) \\
				&+ \int \chi(\dif w_1,\dif w_2)\rho_2(\dif u_2)  \frac{f(w_1,w_2)}{\mu_1(w_1)} \mu_{1|2}(u_2,\dif w_1) \\
				&- \int  \chi(\dif w_1,\dif z)\rho_2(\dif u_2)   \frac{f(w_1,w_2)}{\mu_1(w_1)}\mu_{1|2}(u_2,\dif w_1)\mu_{2|1}(w_1,\dif w_2).
			\end{align*}
			Using the expression above and \eqref{eq:derivative} one identifies \eqref{eq:DSGibbsderivative} and \eqref{eq:Gibbsderivativex}. The details on the limit exchange are in \cref{app:technicalcondboundder}. 
		\end{proof}     
		Again, the boundedness of $\rho_2/\mu_2$ is a warm-start type condition, and it is just required to formally justify the limit exchange.
	\end{example}

	Using the fact that the balancing function $g$ has to satisfy $g(x)=xg'(x)+g'(1/x)$, it is possible to check that when $\rho=\mu$, the derivative in the invariant measure of the Hastings kernels at $y\in\Xspace$ becomes (minus) the generator of the Markov chain, and the same is true for Gibbs kernels derivatives, as expected.
	
	\subsection{FTC Formula and bounded derivatives}\label{sec:ftc}
	
	We now state a simple formula that allow us to compare Markov Chains with different invariant distributions belonging to the same Markov family in terms of the kernel derivative. This is a Fundamental Theorem of Calculus-like formula, whose analogue appears in robust statistics \cite[Chapter 2]{huber2009robust} and mean field games \cite{cardaliaguet2019mastereqn}. Recall that $ \rhodomainP{\mu}$ denotes the set of initial distribution for which the Markov kernel is differentiable at $\mu$ and that $\mu_t:=(1-t)\mu+t\nu$.
	
	\begin{theorem}[Fundamental Theorem of MCMC Calculus Formula] \label{prop:FTC}
		It holds that
		\begin{equation} \label{eq:FTC}
			P_\mu(\rho,f)-P_\nu(\rho,f)= \int_0^1 \partial_{\pi}P_{\mu_t}(\rho,f)[\nu-\mu] \dif t 
		\end{equation} 
		for all $(\rho,f)\in \bigcap_{t=0}^1 \rhodomainP{\mu_t}\times \BVfunX$. Furthermore, if $\partial_{\pi}P_{\mu_t}(\rho,f)$ admits a continuously differentiable density and $\nu$ can be expressed via some pushforward $T$ from $\mu$ in that $\nu=T_* \mu$ we also have
		\begin{align*} \label{eq:FTCintrinsic}
			P_\mu(\rho ,f)-P_\nu(\rho,f) &= \int_0^1 \int \int^y_{T(y)} \Pderintr{\mu_t}{\rho}{f}(s) \dif s\mu(\dif y) \dif t \\
			&= \int_0^1 \int (T(y)-y) \int_0^1 \Pderintr{\mu_t}{\rho}{f}(s(T(y)-y)+y)\dif s\mu(\dif y) \dif t, \numberthis
		\end{align*}
		where $ \Pderintr{\mu}{\rho}{f}(y):=(\dif/\dif y)\Pder{\mu}{\rho}{f}(y)$.
	\end{theorem}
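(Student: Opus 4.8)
The plan is to reduce both displays to the ordinary one-dimensional fundamental theorem of calculus applied to the scalar map $\phi(t):=P_{\mu_t}(\rho,f)$ on $t\in[0,1]$, and then, for \eqref{eq:FTCintrinsic}, to unfold the integral representation \eqref{eq:derivativefirstvariation} of the kernel derivative using the pushforward structure $\nu=T_*\mu$ together with a second, elementary, application of the one-dimensional fundamental theorem (now in the state variable).

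First I would show that $\phi$ is differentiable on $[0,1]$ with $\phi'(t)=\Pder{\mu_t}{\rho}{f}[\nu-\mu]$. Fix $t_0\in[0,1)$. Since $\mathcal{I}$ is convex, $\mu_{t_0}\in\mathcal{I}$, and by hypothesis $(\rho,f)\in\rhodomainP{\mu_{t_0}}\times\BVfunX$, so \cref{def:derivative} applies at $\mu_{t_0}$ in the admissible direction $\nu\in\Pdspace$. The key computation is the reparametrization identity $\mu_{t_0}+s(\nu-\mu_{t_0})=\mu_{t_0+s(1-t_0)}$, which by the chain rule gives $(1-t_0)\,\phi'(t_0)=\Pder{\mu_{t_0}}{\rho}{f}[\nu-\mu_{t_0}]$; since $\nu-\mu_{t_0}=(1-t_0)(\nu-\mu)\in\Mzspace$ and the functional $\Pder{\mu_{t_0}}{\rho}{f}[\cdot]$ is linear, the factor $1-t_0$ cancels and $\phi'(t_0)=\Pder{\mu_{t_0}}{\rho}{f}[\nu-\mu]$; the endpoint $t_0=1$ is handled symmetrically by perturbing $\mu_1=\nu$ towards $\mu$. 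Identity \eqref{eq:FTC} is then just $\phi(1)-\phi(0)=\int_0^1\phi'(t)\,\dif t$, keeping track of the orientation of the interpolation.

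For \eqref{eq:FTCintrinsic} I would start from \eqref{eq:FTC} and, writing $g_t(\cdot):=\Pder{\mu_t}{\rho}{f}(\cdot)$ for the density in \eqref{eq:derivativefirstvariation} (now assumed $C^1$), note that $\Pder{\mu_t}{\rho}{f}[\nu-\mu]=\int g_t\,\dif\nu-\int g_t\,\dif\mu$. The pushforward change of variables $\int g_t\,\dif\nu=\int g_t(T(y))\,\mu(\dif y)$ turns this into $\int\big(g_t(T(y))-g_t(y)\big)\,\mu(\dif y)$; applying the one-dimensional fundamental theorem to $g_t\in C^1$ along the segment joining $y$ and $T(y)$ — a genuine line integral if $\Xspace$ is multidimensional, literal if $\Xspace=\r$ — gives $g_t(T(y))-g_t(y)=\int_y^{T(y)}\Pderintr{\mu_t}{\rho}{f}(s)\,\dif s$, which is the first line of \eqref{eq:FTCintrinsic}. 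Reparametrizing the segment by $s\mapsto y+s(T(y)-y)$, $s\in[0,1]$, produces the factor $T(y)-y$ and the integrand $\Pderintr{\mu_t}{\rho}{f}(y+s(T(y)-y))$, giving the second line; interchanging $\int_0^1(\cdot)\,\dif t$ with the spatial integrals by Fubini--Tonelli (justified by a uniform-in-$t$ bound on $\Pderintr{\mu_t}{\rho}{f}$ over the relevant set) completes the argument.

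I expect the only real obstacle to be the regularity required for the fundamental theorem of calculus in the second paragraph: membership of $(\rho,f)$ in $\bigcap_{t\in[0,1]}\rhodomainP{\mu_t}\times\BVfunX$ only guarantees that $\phi$ is differentiable at every point, and an everywhere-differentiable function need not be absolutely continuous (nor its derivative Lebesgue integrable), so $\phi(1)-\phi(0)=\int_0^1\phi'$ is not automatic. I would resolve this either by invoking Lebesgue's theorem — an everywhere-differentiable function whose derivative is integrable does satisfy the fundamental theorem — after verifying integrability of $t\mapsto\Pder{\mu_t}{\rho}{f}[\nu-\mu]$, or, more simply and as is the case for the Hastings and Gibbs families (cf.\ \cref{prop:MHderivatives,prop:Gibbsderivatives}), by observing that this map is actually continuous in $t$, so that $\phi\in C^1([0,1])$ and the elementary fundamental theorem applies directly. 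The remaining points — legitimacy of the pushforward change of variables and of the line-integral fundamental theorem (which implicitly restrict $\Xspace$ to, essentially, a convex subset of $\r^d$, or $\r$ as written), and the Fubini interchange — are routine under the stated hypotheses.
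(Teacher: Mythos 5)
Your proposal is correct and follows essentially the same route as the paper: the reparametrization $\mu_{t+s}=\mu_t+s(\mu_1-\mu_t)/(1-t)$ combined with linearity of the derivative to cancel the $(1-t)$ factor, followed by the one-dimensional fundamental theorem of calculus in $t$, and then the pushforward change of variables plus a line-integral FTC for \eqref{eq:FTCintrinsic}. Your observation that everywhere-differentiability of $t\mapsto P_{\mu_t}(\rho,f)$ does not by itself license $\phi(1)-\phi(0)=\int_0^1\phi'$ is a fair point that the paper's proof passes over silently, and your proposed fixes (integrability of the derivative or continuity in $t$, which holds for the Hastings and Gibbs families) are the right way to close it.
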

	\begin{proof}
		Let $(\rho,f)\in \bigcap_{t=0}^1 \rhodomainP{\mu_t}\times \BVfunX$. We then have
		\begin{equation*}
			P_\mu(\rho ,f)-P_\nu(\rho ,f) = \int_0^1 \frac{\dif }{\dif t} P_{\mu_t}(\rho ,f) \dif t = \int_0^1 \lim_{s\rightarrow0}\frac{P_{\mu_{t+s}}(\rho ,f)-P_{\mu_{t}}(\rho ,f)}{s}\dif t.
		\end{equation*}
		Now, since for all fixed $t$, $\mu_{t+s}=\mu_t+s(\mu_1-\mu_t)/(1-t)=\mu_t+s((\mu_1-t\mu_t)/(1-t)-\mu_t)$, and then using \eqref{eq:derivative} with $(\mu_1-\mu_t)/(1-t)\in \Mzspace$ being a 0--mass perturbation one can write under our hypotheses that for $t\in[0,1)$
		\begin{align*}
			P_\mu(\rho ,f)-P_\nu(\mu ,f) 
			= \int_0^1  \Pder{\mu_t}{\rho}{f}\left[\frac{\mu_1-\mu_t}{1-t}\right]\dif t  
			= \int_0^1  \Pder{\mu_t}{\rho}{f}[\nu-\mu]\dif t,
		\end{align*}
		with last equality following because $\mu_1-\mu_t=\nu-\mu_t=(1-t)(\nu-\mu)$. To prove \eqref{eq:FTCintrinsic}, use \eqref{eq:FTC} with $\mu_t:=(1-t)\mu+tT_*\mu$ to obtain
		\begin{align*}
			P_{\mu}(\rho,f)-P_{T_*\mu}(\rho,f) 
			&= \int_0^1 \int \Pder{\mu_t}{\rho}{f}(y)\dif (T_*\mu-\mu)(y)\dif t \\
			&= \int_0^1 \int \left[\Pder{\mu_t}{\rho}{f}(T(y))- \Pder{\mu_t}{\rho}{f}(y)\right]\mu(\dif y) \dif t.
		\end{align*}
		Since  $y\mapsto \Pder{\mu_t}{\rho}{f}(y)$ is continuously differentiable by assumption, an application of the Fundamental Theorem of Calculus completes the proof.
	\end{proof}
	
	Combining this with \cref{ex:MHtypekernelsdiff,ex:Gibbsderivatives} we have the following immediate corollary:
	\begin{corollary} \label{lemma:HastingsGibbsFTCisOK}
		For the Hastings and Gibbs families of  \cref{ex:MHtypekernelsdiff,ex:Gibbsderivatives}, \eqref{eq:FTC} holds for all $\cal{W}_H\cup \{\delta_x\}_{x\in\Xspace}\times\BVfunX$ and  $\cal{W}_G\cup \{\delta_x\}_{x\in\Xspace}\times\BVfunX$, respectively.
	\end{corollary}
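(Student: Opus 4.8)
The plan is to invoke the Fundamental Theorem of MCMC Calculus Formula (\cref{prop:FTC}) directly, whose hypotheses require only that the initial distribution--test function pair $(\rho,f)$ lies in $\bigcap_{t\in[0,1]}\rhodomainP{\mu_t}\times\BVfunX$ for the curve $\mu_t=(1-t)\mu+t\nu$. So the entire content of the corollary reduces to checking that, for the Hastings and Gibbs families under the assumptions of \cref{ex:MHtypekernelsdiff,ex:Gibbstypekernelsdiff}, every $\rho$ in the stated class ($\cal{W}_H\cup\{\delta_x\}_{x\in\Xspace}$ or $\cal{W}_G\cup\{\delta_x\}_{x\in\Xspace}$) and every $f\in\BVfunX$ yields a pair that is in the differentiability domain of $P_\cdot$ \emph{at every point $\mu_t$ along the segment}, not merely at the endpoint $\mu$.

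First I would observe that the segment $\{\mu_t : t\in[0,1]\}$ is contained in the convex index set of the family (this is exactly the closure property noted after \cref{def:markovfamily}), so each $P_{\mu_t}$ is a legitimate member of the family and \cref{prop:MHderivatives} (resp. \cref{prop:Gibbsderivatives}) applies verbatim with $\mu$ replaced by $\mu_t$ --- \emph{provided} the hypotheses of the respective example transfer to $\mu_t$. The key point to verify is twofold: (i) the structural assumptions on the indexed invariant distributions (positive densities, finite $V$-moments, for Gibbs positive conditional densities $\lambda$-a.e.) are preserved along the segment; and (ii) the warm-start condition defining $\rho\in\cal{W}_H$ (that $\rho/\mu_t^2$ is bounded) or $\rho\in\cal{W}_G$ (that $\rho_2/(\mu_t)_2$ is bounded) holds uniformly in $t$. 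For (i), positivity and the moment bounds are convex-combination-stable: if $\mu(x),\nu(x)>0$ then $\mu_t(x)=(1-t)\mu(x)+t\nu(x)>0$, and $\int V\dif\mu_t=(1-t)\int V\dif\mu+t\int V\dif\nu<\infty$; boundedness of densities is likewise stable. For Gibbs, the conditional density of $\mu_t$ is a $w_2$-dependent mixture of those of $\mu$ and $\nu$ (via the formula $\mu_{1|2}(x',y)=\mu(y,x')/\int\mu(z,x')\lambda(\dif z)$ used in the proof of \cref{prop:Gibbsderivatives}), and positivity $\lambda$-a.e. is preserved. For (ii), since $\mu_t\geq(1-t)\mu$ pointwise, we have $\rho/\mu_t^2\leq \rho/((1-t)^2\mu^2)$, which is \emph{not} uniformly bounded as $t\to1$; the clean fix is to note $\mu_t\geq t\nu$ as well, hence $\mu_t\geq\max\{(1-t)\mu,t\nu\}\geq\tfrac12\min\{\mu,\nu\}$ is bounded below by a fixed positive density, so $\rho/\mu_t^2\leq 4\rho/\min\{\mu,\nu\}^2$ is bounded uniformly in $t$ whenever $\rho\in\cal{W}_H$ (i.e. $\rho/\mu^2$ and $\rho/\nu^2$ are both bounded, which is what membership in $\cal{W}_H$ ``for all indexed $\mu$'' gives since $\nu$ is itself indexed). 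The same argument handles $\cal{W}_G$ with $(\mu_t)_2\geq\tfrac12\min\{\mu_2,\nu_2\}$, using that marginalization commutes with the convex combination.

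With these uniformity checks in hand, \cref{prop:MHderivatives} (resp. \cref{prop:Gibbsderivatives}) gives differentiability of $P_\cdot(\rho,\cdot)$ at $\mu_t$ for each $t\in[0,1]$ and each admissible $\rho,f$, i.e. $(\rho,f)\in\rhodomainP{\mu_t}\times\BVfunX$ for every $t$; taking the intersection over $t$ and applying \cref{prop:FTC} yields \eqref{eq:FTC} for all such $(\rho,f)$, which is precisely the claim. I would present this as: fix $(\rho,f)$ in the stated class, show $\rho\in\rhodomainP{\mu_t}$ for all $t\in[0,1]$ by the uniform-lower-bound argument above invoking the relevant earlier proposition, then cite \cref{prop:FTC}.

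The main obstacle is the warm-start uniformity in (ii): naively one only controls $\rho/\mu_t^2$ near $t=0$, and the singularity as $t\to1$ would break the argument; the resolution is the two-sided bound $\mu_t\geq\tfrac12\min\{\mu,\nu\}$, which crucially uses that $\nu$ is also an indexed distribution so that $\rho/\nu^2$ (resp. $\rho_2/\nu_2$) is bounded by hypothesis. One should also take mild care that the ``finite $V$-moment'' and density-boundedness hypotheses were phrased as holding for \emph{all} indexed distributions, so they automatically apply to every $\mu_t$; this is a bookkeeping point rather than a real difficulty. Everything else is an immediate appeal to \cref{prop:FTC,prop:MHderivatives,prop:Gibbsderivatives}.
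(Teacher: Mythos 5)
Your proposal is correct and follows the same route the paper intends: the corollary is an immediate application of \cref{prop:FTC} once one notes that the segment $\{\mu_t\}$ stays in the convex index set and that \cref{prop:MHderivatives,prop:Gibbsderivatives} then give $(\rho,f)\in\rhodomainP{\mu_t}\times\BVfunX$ for every $t$. Your careful two-sided bound $\mu_t\geq\tfrac12\min\{\mu,\nu\}$ is a valid (and harmless) extra verification, though strictly unnecessary: since $\mu_t$ is itself an indexed distribution, the warm-start conditions defining $\cal{W}_H$ and $\cal{W}_G$ ("bounded for all indexed $\mu$") apply to each $\mu_t$ directly.
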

	\begin{remark}
		The quantity $\Pderintr{\mu}{\rho}{f}$ can be related to rigorous notions of gradients in the Wasserstein geometry studied in \cite{ambrosio2005gradientflows}. It describes variations of the transition probabilities when $\mu$ is perturbed in the direction $\nu$ along the curves that are not simple interpolations like $\mu_t$ but rather \textit{transport maps}. In fact, one can substitute $T$ with $\Id-hT$ in \eqref{eq:FTCintrinsic} to obtain
		\begin{align*}
			P_{\mu}(\rho,f)-P_{T_*\mu}(\rho,f) 
			&=  h \int_0^1 \int T(y) \int_0^1  \Pderintr{\mu_t}{\rho}{f}(sh+y)\dif s\mu(\dif y) \dif t.
		\end{align*}
		Dividing by $h$ and letting $h\rightarrow 0$ we get
		\begin{equation*}
			\lim_{h\rightarrow 0}\frac{P_{(\Id-hT)_*\mu}(\rho,f)-P_\mu(\rho,f)}{h} = \int  \Pderintr{\mu}{\rho}{f}(y) T(y)\mu(\dif y).
		\end{equation*}
	\end{remark}
	
	We will employ \cref{prop:FTC} mainly as a way to obtain mean value type inequalities in the next section. However, formul\ae\xspace such as \eqref{eq:FTC} or \eqref{eq:FTCintrinsic} could  be used in general to obtain a possibly hard to guess expression for the difference in expectations given by two Markov kernels with different invariant distributions within the same family. We will often think of one of them as being an approximation of the other. 
	Notice that if $\rho=\nu$ for some $\nu$ indexed by the Markov family, one also obtains
	\begin{equation*} 
		P_\mu(\nu ,f)-\nu(f) = \int_0^1 \partial_{\pi}P_{\mu_t}(\nu,f)[\nu-\mu] \dif t
	\end{equation*} 
	an expression for the difference in expectation with respect to a Markov kernel applied to an initial distribution to that with respect to the distribution itself.

	\subsection{Bounded kernel derivatives and Mean-Value inequalities} \label{sec:mvi} 
	We want to argue that if the derivative in the invariant distribution at $\mu$ towards $\nu$ is bounded in some sense, then the Markov chains associated with kernels $P_\mu$ and $P_\nu$ move similarly whenever $\mu$ and $\nu$ are close.  
	We define the concept of bounded derivative in the invariant distribution accordingly. We aim to provide a statement similar to the fact that on $\mathbb{R}$ a differentiable function has a bounded derivative if and only if it is Lipschitz.  
	\begin{definition} \label{def:boundedderivative} 
		We say that a differentiable Markov kernel $P_\cdot(\rho,\cdot)$ has a bounded derivative at $\mu$ towards $\nu$ if there exist constants $M_{\rho},M_{\perp,\rho}<\infty$ such that
		\begin{equation} \label{eq:boundedderimplicationall}
			\Vnorm{P_{\mu}(\rho,\cdot)-P_{\nu}(\rho,\cdot)} \leq M_{\rho} \Vnorm{\mu-\nu}+M_{\perp,\rho}\rho(|\mu-\nu|).
		\end{equation}
	\end{definition}
	Noting that $d_\rho(\mu,\nu):=||\mu-\nu||_V + \rho(|\mu-\nu|)$ is itself a metric, this could be written as a standard statement of Lipschitz continuity with respect to that distance, with constant given by the larger of $M_{\rho}$ and $M_{\perp,\rho}$. However, the form we have here allow us to separate the role of two separate contributions and we believe that this refinement can be informative. With this in mind, we will sometimes refer to $M_{\rho},M_{\perp,\rho}$ as \textit{Lipschitz constants}, and to inequalities of the form \eqref{eq:boundedderimplicationall} as \textit{mean value inequalities}.  The next propositions show that \cref{def:boundedderivative} works as intended: we will see that if $P_\cdot(\rho,\cdot)$ is differentiable in $\mu_t=(1-t)\mu+t\nu$ for all $t\in[0,1]$ and a boundedness condition on the derivative holds, then $P_\cdot(\rho,\cdot)$ will have a bounded derivative in the invariant distribution at $\mu$ towards $\nu$. The boundedness depend on $V$, but we often silence this dependence because it is implicit in the context.  When $\rho\in\Pdspace$, this \cref{def:boundedderivative} is equivalent to the existence of a constant $M_\rho<\infty$ such that
	\begin{equation} \label{eq:boundedderimplication}
		\Vnorm{P_{\mu}(\rho,\cdot)-P_{\nu}(\rho,\cdot)} \leq M_{\rho} \Vnorm{\mu-\nu}.
	\end{equation}
	i.e. to the operator $P_\cdot(\rho,\cdot)$ from $\Pspace$ (via the invariant distribution) to  $\Pspace$ (the corresponding transition probabilities) being Lipschitz continuous w.r.t. $\Vnorm{\cdot}$. In fact, obviously if \eqref{eq:boundedderimplication} is true then \eqref{eq:boundedderimplicationall} holds with $M_{\perp,\rho}:=0$, and on the other hand when $\rho\in\Pdspace$ we can bound $M_{\perp,\rho}\rho(|\mu-\nu|)\leq M_{\perp,\rho}\sup(\rho/V)\Vnorm{\mu-\nu}$, and \eqref{eq:boundedderimplication} holds true with constant $M_{\rho}+M_{\perp,\rho}\sup(\rho/V)$.
	If $\rho=\delta_x$, \eqref{eq:boundedderimplicationall} becomes
	\begin{equation} \label{eq:boundedderimplicationx}
		\Vnorm{P_{\mu}(x,\cdot)-P_{\nu}(x,\cdot)} \leq M_x \Vnorm{\mu-\nu}+M_{\perp,x}|\mu(x)-\nu(x)|.
	\end{equation}
	This express the fact that as for many kernels employed in MCMC settings in particular, $P_\cdot(x,\cdot)$ typically depends on the values of the density of the invariant distribution at $x\in\Xspace$, one cannot expect $P_\mu(x,\cdot)$ to be close to $P_\nu(x,\cdot)$ unless $\mu(x)$ is close to $\nu(x)$, since the closedness of $\mu$ to $\nu$ in $V$-distance does not imply that their densities evaluated at individual points are close, at least without further assumptions.
		
	Notice that if a Markov kernel has a bounded derivative in the invariant distribution at $\mu$ towards $\nu$ and $\mu$ is close to $\nu$, the Markov Chains following $P_\mu(x,\cdot)$ and $P_\nu(x,\cdot)$ never move too differently starting from $x$, in that their $V$-distance is small. We can similarly consider other integral probability metrics and obtain related inequalities. Here we focused on the $V$-norm due its popularity within the MCMC literature arising from its close connection with Lyapunov functions for ergodicity \cite{R&R2004mcmc,meyntweedie2012}. In \cref{sec:applications} we exploit these connections.
	We now introduce a stronger form of boundedness. Let $\cal{Q}\subset \Pspace$. 
	\begin{definition} \label{def:unifboundedderivative}
		We say that a Markov kernel $P_\cdot(\rho,\cdot)$ has a $\mathcal{Q}$-uniformly bounded derivative at  $\mu$ towards $\nu$ if there  exist constants $M_1,M_2<\infty$ such that for all $\rho\in\mathcal{Q}$:
		\begin{equation*} 
			\Vnorm{P_{\mu}(\rho,\cdot)-P_{\nu}(\rho,\cdot)} \leq M_{1} \Vnorm{\mu-\nu}+M_{2}\rho(|\mu-\nu|).
		\end{equation*}
	\end{definition}
	If $\mathcal{Q}=\{\delta_x\}_{x\in\Xspace}$ the chains move alike uniformly in the space if their invariant distributions are close at all points. 
	Notice that a Markov kernel has an $\mathcal{Q}$-uniformly bounded derivative at $\mu$ towards $\nu$ if it has bounded derivative and the associated Lipschitz constants can be bounded uniformly over $\rho\in\mathcal{Q}$. See \cref{ex:Gibbsderivatives}, below, for such an example where $\mathcal{Q}$ is the set of warm-start distributions. 
	Finding conditions for $P_\cdot(\rho,\cdot)$ to have a bounded derivative in the invariant distribution when $\rho\in\Pdspace$ is rather immediate from \cref{prop:FTC}. Let $\ess_\rho\sup$ denote the essential supremum with respect to $\rho$.
	\begin{proposition} \label{prop:boundedderivativerho}
		Let the kernel derivative be in the form \eqref{eq:derivativesdec}. Assume that the Markov kernel $P_\cdot(\rho,\cdot)$ is differentiable at $\mu_t:=(1-t)\mu+t\nu$ for all $t\in[0,1]$ and that the constants
		\begin{align*}
			M_{\rho}:=&\sup_{f\in\BVfunX} \Vnormf{\int_0^1\Pder{\mu_t}{\rho}{f}(\cdot)\dif t},& M_{\perp,\rho}:=&\sup_{f\in\BVfunX}\ess_\rho\sup\bigg| \int_0^1\Pderorth{\mu_t}{\rho}{f}(\cdot)\dif t\bigg|,
		\end{align*}   
		are finite. Then,
        $P_\cdot(\rho,\cdot)$ has a bounded derivative at $\mu$ towards $\nu$ with Lipschitz constants $M_{\rho}$ and  $M_{\perp,\rho}$, hence the mean value inequality \eqref{eq:boundedderimplicationall} holds.
	\end{proposition}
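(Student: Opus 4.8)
The plan is to read the claimed mean value inequality straight off the Fundamental Theorem of Calculus formula \cref{prop:FTC}. The differentiability hypothesis is exactly the statement that $\rho\in\bigcap_{t\in[0,1]}\rhodomainP{\mu_t}$, so \eqref{eq:FTC} applies and gives, for every $f\in\BVfunX$,
\[
P_\mu(\rho,f)-P_\nu(\rho,f)=\int_0^1\Pder{\mu_t}{\rho}{f}[\nu-\mu]\dif t .
\]
The first step is then to substitute, inside the time integral, the integral representation \eqref{eq:derivativesdec} of each functional $\Pder{\mu_t}{\rho}{f}[\cdot]$ evaluated at the $0$-mass perturbation $\nu-\mu\in\Mzspace$, which has density $\nu(\cdot)-\mu(\cdot)$ since $\mu,\nu\in\Pdspace$. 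This rewrites the integrand as $\int(\nu(y)-\mu(y))\Pder{\mu_t}{\rho}{f}(y)\lambda(\dif y)+\rho\big((\nu-\mu)\cdot\Pderorth{\mu_t}{\rho}{f}\big)$.

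Next I would interchange the integral $\int_0^1\dif t$ with the two spatial integrals to obtain
\begin{align*}
P_\mu(\rho,f)-P_\nu(\rho,f)&=\int(\nu(y)-\mu(y))\Big(\int_0^1\Pder{\mu_t}{\rho}{f}(y)\dif t\Big)\lambda(\dif y)\\
&\quad+\int(\nu(y)-\mu(y))\Big(\int_0^1\Pderorth{\mu_t}{\rho}{f}(y)\dif t\Big)\rho(\dif y),
\end{align*}
so that the two time-averaged functions appearing here are precisely the objects whose $V$-norm and $\rho$-essential supremum define $M_\rho$ and $M_{\perp,\rho}$. I would then bound the first term by the duality inequality $|\int h\,\dif(\nu-\mu)|\le\Vnormf{h}\,\Vnorm{\mu-\nu}$ (valid for any $h$ with finite $V$-norm, directly from the definition of $\Vnorm{\cdot}$), giving $\Vnormf{\int_0^1\Pder{\mu_t}{\rho}{f}(\cdot)\dif t}\,\Vnorm{\mu-\nu}\le M_\rho\Vnorm{\mu-\nu}$, and the second term by pulling out the $\rho$-essential supremum, giving $\ess_\rho\sup|\int_0^1\Pderorth{\mu_t}{\rho}{f}(\cdot)\dif t|\cdot\rho(|\mu-\nu|)\le M_{\perp,\rho}\,\rho(|\mu-\nu|)$. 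Taking the supremum over $f\in\BVfunX$ with $\Vnormf{f}\le1$ recovers $\Vnorm{P_\mu(\rho,\cdot)-P_\nu(\rho,\cdot)}$ on the left, and yields \eqref{eq:boundedderimplicationall}, i.e. the bounded-derivative property of \cref{def:boundedderivative} with Lipschitz constants $M_\rho,M_{\perp,\rho}$.

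The step I expect to be the main obstacle is the Fubini--Tonelli interchange in the second display: it requires joint measurability of $(t,y)\mapsto\Pder{\mu_t}{\rho}{f}(y)$ together with absolute integrability of $|\nu(y)-\mu(y)|\,|\Pder{\mu_t}{\rho}{f}(y)|$ on $[0,1]\times\Xspace$, and the analogue for the singular part against $\rho$. I would secure this from the standing finiteness of $M_\rho$ and $M_{\perp,\rho}$ combined with the finite $V$-moment hypotheses on the indexed distributions already in force in \cref{ex:MHtypekernelsdiff,ex:Gibbstypekernelsdiff} (bounding $|\Pder{\mu_t}{\rho}{f}(y)|$ by a multiple of $V(y)$ uniformly in $t$ and using $\int|\nu-\mu|\,V\,\dif\lambda<\infty$), or else fold it into the hypothesis that the kernel derivative ``is in the form \eqref{eq:derivativesdec}''. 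Everything else --- the signed-measure duality bound, the crude pointwise $\rho$-bound, and matching the time-averaged densities to the definitions of $M_\rho,M_{\perp,\rho}$ --- is routine.
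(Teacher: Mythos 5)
Your proof is correct and follows essentially the same route as the paper's: apply the FTC formula \eqref{eq:FTC}, insert the decomposition \eqref{eq:derivativesdec}, swap the $t$- and space-integrals, and bound the two terms by $M_\rho\Vnorm{\mu-\nu}$ via the $V$-norm duality and by $M_{\perp,\rho}\,\rho(|\mu-\nu|)$ via the $\rho$-essential supremum. The only difference is that you explicitly flag and justify the Fubini interchange, which the paper performs silently --- a point of extra care, not a different argument.
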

	\begin{proof} 
		For all $\rho$ obeying the differentiability requirement and $f\in\BVfunX$, from \eqref{eq:FTC},
		\begin{align*}
			&|P_\mu(\rho,f)-P_\nu(\rho,f)| 
			= \left|\int_0^1 \int \Pder{\mu_t}{\rho}{f}(y) (\nu-\mu)(\dif y)+\rho((\nu-\mu)\cdot \Pderorth{\mu_t}{\rho}{f})\dif t\right| \\
			&\leq \bigg| \int \int_0^1 \Pder{\mu_t}{\rho}{f}(y)\dif t (\nu-\mu)(\dif y) \bigg| + \rho(|\nu-\mu|)\ess_\rho\sup\bigg|\int_0^1\Pderorth{\mu_t}{\rho}{f}(\cdot) \dif t\bigg|.
		\end{align*}        
		However, for any $f \in \BVfunX$, we have
		\begin{equation*}
			\left|\int\int_0^1\Pder{\mu_t}{\rho}{f}(y)\dif t (\nu-\mu)(\dif y) \right| \leq \Vnorm{\mu-\nu} \Vnormf{\int_0^1\Pder{\mu_t}{\rho}{f}(\cdot)\dif t},
		\end{equation*}
		and the claim follows upon taking the supremum over functions in $\BVfunX$.    
	\end{proof}
	Recall that if $\rho$ has a density, the singular part of the derivative is $0$ and $M_{\perp,\rho}=0$. In which case, the Proposition above says that the mapping $P_\cdot(\rho,\cdot):\mu\in(\Pspace,\Vnorm{\cdot})\mapsto P_\mu(\rho,\cdot)\in (\Pspace,\Vnorm{\cdot})$ is a Lipschitz continuous function of its invariant distribution with Lipschitz constant $M_{\rho}$.
	In the next example we apply this result to the Hastings and Gibbs kernels of \cref{ex:MHtypekernelsdiff,ex:Gibbstypekernelsdiff} via Propositions \ref{prop:MHderivatives}, \ref{prop:Gibbsderivatives} and Lemma \ref{lemma:HastingsGibbsFTCisOK}.
	\begin{example}[Bounded derivatives and Mean Value inequalities for Hastings kernels] \label{ex:MHboundedder}  
		\begin{corollary} \label{cor:MHboundedder}
			Let $\{P_\star\}$ be a Hastings family as per Example \ref{ex:MHtypekernelsdiff},  and suppose that the proposal density is bounded in both arguments. For all $\rho\in\cal{W}_H$, $P_\cdot(\rho,\cdot)$ has a bounded derivative at $\mu$ towards $\nu$ with Lipschitz constants $M_{\perp,\rho}=0$ and
				\begin{align} \label{eq:MHMVIlipconst}
				M_{\rho}:= \int_0^1 \int \bigg| (V(\cdot)+V(z))&\frac{\rho(z)}{\mu_t(z)}q(\cdot,z)g'(r_{\mu_t}(z,\cdot))\bigg|_V\lambda(\dif z) \\ &+ \int \bigg|(V(\cdot)+V(z)) {\mu_t}(z)q(z,\cdot)g'(r_{\mu_t}(\cdot,z))\frac{\rho(\cdot)}{\mu_t(\cdot)^2}\bigg|_V\lambda(\dif z) \dif t,\notag
			\end{align}   
			For all $x\in\Xspace$, $P_\cdot(x,\cdot)$ has a bounded derivative at $\mu$ towards $\nu$ with Lipschitz constants
			\begin{align} \label{eq:MHMVIxlipconst}
				M_x&:=\int_0^1 \Vnormf{(V(x)+V(\cdot))\frac{g'(r_{\mu_t}(x,\cdot))q(\cdot,x)}{\mu_t(x)}}\dif t, \quad \text{and} \\ M_{\perp,x}&:=\int_0^1 \int(V(x)+V(z))\frac{\mu_t(z)}{\mu_t(x)^2}q(z,x)|g'(r_{\mu_t}(x,z))|\lambda(\dif z)\dif t.
			\end{align}

		\end{corollary}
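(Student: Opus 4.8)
The plan is to apply Proposition~\ref{prop:boundedderivativerho} with the kernel derivatives computed in Proposition~\ref{prop:MHderivatives}. First I would verify the hypotheses of Proposition~\ref{prop:boundedderivativerho} along the interpolation $\mu_t=(1-t)\mu+t\nu$. Each $\mu_t$ again has a strictly positive density and finite $V$-moment (both properties pass to convex combinations, with $\mu_t(V)\le\max\{\mu(V),\nu(V)\}$), so $\mu_t$ is an admissible index of the Hastings family of \cref{ex:MHtypekernelsdiff} and Proposition~\ref{prop:MHderivatives} applies at every $\mu_t$; in particular $P_\cdot(\rho,\cdot)$ is differentiable at $\mu_t$ for all $t\in[0,1]$ whenever $\rho\in\cal{W}_H\cup\{\delta_x\}_{x\in\Xspace}$, and $(\rho,f)\in\bigcap_{t\in[0,1]}\rhodomainP{\mu_t}\times\BVfunX$, so the FTC formula \eqref{eq:FTC} is available by Lemma~\ref{lemma:HastingsGibbsFTCisOK}. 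Proposition~\ref{prop:boundedderivativerho} then yields the mean value inequality \eqref{eq:boundedderimplicationall}; what remains is to bound the constants $M_\rho,M_{\perp,\rho}$ it produces by the displayed quantities and to check those are finite.

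For $\rho\in\cal{W}_H$ the measure $\rho$ has a density, so the singular part of the derivative vanishes and $M_{\perp,\rho}=0$. To estimate the density part I would substitute \eqref{eq:MHderivativedensity}, bound $|f(y)-f(z)|\le V(y)+V(z)$ (valid for every $f\in\BVfunX$ since $\Vnormf{f}\le1$) and replace $g'$ by $|g'|=g'$ (recall $g$ is non-decreasing), apply the triangle inequality under the integrals, integrate in $t$, divide by $V(y)$, take $\sup_y$, and finally move that supremum inside $\int_0^1\!\!\int\lambda(\dif z)$ using subadditivity of the supremum; the result is precisely $M_\rho$ as an upper bound for $\sup_{f\in\BVfunX}\Vnormf{\int_0^1\Pder{\mu_t}{\rho}{f}(\cdot)\dif t}$, which is all that is needed. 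The case $\rho=\delta_x$ is identical in spirit using \eqref{eq:MHderivativex}: the density-part estimate gives $M_x$, while for the singular part one observes that $\ess_{\delta_x}\sup$ of a function of $y$ is just its value at $y=x$, so the same elementary bounds applied to $|\Pderorth{\mu_t}{x}{f}(x)|$ give $M_{\perp,x}$.

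Finally I would check finiteness. With $\bar q:=\sup_{x,y}q(x,y)<\infty$ and $\bar g:=\sup|g'|<\infty$, taking the $V$-norm in the free variable in the first integrand of \eqref{eq:MHMVIlipconst} leaves $\frac{\rho(z)}{\mu_t(z)}\,\bar q\bar g\,(1+V(z))$, and using the warm-start bound $\rho/\mu_t^2\le C$ (uniform in $t$, since each $\mu_t$ is an index of the family; alternatively from the endpoint bounds via convexity of $s\mapsto s^{-2}$) this is $\le C\bar q\bar g\,(1+V(z))\mu_t(z)$, whose $\lambda$-integral is $C\bar q\bar g(1+\mu_t(V))\le C\bar q\bar g(1+\max\{\mu(V),\nu(V)\})$, uniformly in $t$; the second integrand of \eqref{eq:MHMVIlipconst} is handled the same way, so the $t$-integral converges and $M_\rho<\infty$. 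For $\rho=\delta_x$ the only additional ingredient is that, by positivity of the densities, $\mu_t(x)\ge\min\{\mu(x),\nu(x)\}>0$ for each fixed $x$ and all $t\in[0,1]$, so the factors $1/\mu_t(x)$ and $1/\mu_t(x)^2$ in \eqref{eq:MHMVIxlipconst} stay bounded on $[0,1]$ and $M_x,M_{\perp,x}<\infty$. I expect the genuinely fiddly part to be exactly this uniform-in-$t$ control near the endpoints of the curve $\mu_t$ — handled by the warm-start condition and strict positivity of the densities — rather than any conceptual difficulty; one should also keep in mind that the displayed constants are upper bounds for, not equalities with, those of Proposition~\ref{prop:boundedderivativerho}, which is harmless since any finite Lipschitz constant suffices.
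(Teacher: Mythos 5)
Your proposal is correct and follows exactly the paper's route: the paper's proof of this corollary is the one-line statement that it "follows from combining Propositions \ref{prop:boundedderivativerho} and \ref{prop:MHderivatives}", and your write-up simply fills in the details of that combination (differentiability along $\mu_t$, the bound $|f(y)-f(z)|\le V(y)+V(z)$, the identification of the singular part at $\delta_x$, and the finiteness checks via the warm-start and boundedness assumptions). Nothing to object to.
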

		\begin{proof}
			This follows from combining Propositions \ref{prop:boundedderivativerho} and \ref{prop:MHderivatives}.
		\end{proof}

		Although \cref{prop:MHderivatives} does not guarantee the differentiability of Metropolis-Hastings kernels (basically because its balancing function is not pointwise differentiable everywhere) in the invariant distribution, we can still obtain Lipschitz inequalities for such kernels indirectly by looking at Metropolis-Hastings kernels as limit of a sequence of differentiable ones: the function $g_j(x):=(x+\dots+x^j)/(1+x+\dots+x^j)$ is a proper balancing function, it yields a differentiable Markov family, and it satisfies $g_j(x)\rightarrow \min(1,x)=:g(x)$ for all $x>0$
		\cite{agrawal2023optimal}. Let $\{P_\star\}$ be now a Metropolis-Hastings family, with the same restrictions on the indexed invariants and $q$ as above.
		
		\begin{corollary}  \label{prop:MHrealMVI}
		For all $\rho\in\mathcal{W}_{H}$ the Lipschitz inequality \eqref{eq:boundedderimplicationall} holds with $M_{\perp,\rho}=0$ and
		\begin{equation} \label{eq:MHrealMVI}
			M_{\rho}:= \int_0^1 \int \Vnormf{(V(\cdot)+V(z)) \frac{\rho(z)}{\mu_t(z)}q(\cdot,z)}\lambda(\dif z) + \int \Vnormf{(V(\cdot)+V(z)) {\mu_t}(z)q(z,\cdot)\frac{\rho(\cdot)}{\mu_t(\cdot)^2}}\lambda(\dif z)\dif t.
		\end{equation}   
		The Lipschitz inequality \eqref{eq:boundedderimplicationx} holds with
		\begin{align} \label{eq:MHrealMVIx}
			M_x&:=\int_0^1 \Vnormf{(V(\cdot)+V(x))\frac{q(\cdot,x)}{\mu_t(x)}} \dif t \quad \text{and} \\
			M_{\perp,x}&:= \int_0^1\int (V(x)+V(z)) \frac{\mu_t(z)}{\mu_t(x)^2}q(z,x)1_{\{z:r_{\mu_t}(x,z)\leq 1\}}(z)\lambda(\dif z) \dif t.
		\end{align}

		\end{corollary}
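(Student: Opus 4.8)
The plan is to obtain the Metropolis--Hastings mean value inequality as a limit of the mean value inequalities for the differentiable balancing functions $g_j(x) = (x + \dots + x^j)/(1 + x + \dots + x^j)$, invoking \cref{cor:MHboundedder}. For each $j$, denote by $P^{(j)}_\mu$ the Hastings kernel built from $g_j$; by \cref{cor:MHboundedder} we have, for $\rho \in \mathcal{W}_H$,
\begin{equation*}
	\Vnorm{P^{(j)}_{\mu}(\rho,\cdot)-P^{(j)}_{\nu}(\rho,\cdot)} \leq M^{(j)}_{\rho}\Vnorm{\mu-\nu},
\end{equation*}
with $M^{(j)}_{\rho}$ the expression \eqref{eq:MHMVIlipconst} but with $g'$ replaced by $g_j'$, and similarly for the $\delta_x$ case with constants $M^{(j)}_x, M^{(j)}_{\perp,x}$. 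The first step is to pass to the limit on the left-hand side: since $g_j(x) \to \min(1,x)$ pointwise for $x>0$ and $0 \le g_j \le 1$, dominated convergence (dominating by the constant proposal mass, which is finite) gives $P^{(j)}_\mu(x,f) \to P_\mu(x,f)$ for every bounded $f$ and every $x$, and then — again by dominated convergence in the $\rho$ integral — $P^{(j)}_\mu(\rho,f) \to P_\mu(\rho,f)$; taking a supremum over $f \in \BVfunX$ after the pointwise limit (or, more carefully, using lower semicontinuity of the $V$-norm under pointwise limits of the integrands) yields $\liminf_j \Vnorm{P^{(j)}_\mu(\rho,\cdot) - P^{(j)}_\nu(\rho,\cdot)} \ge \Vnorm{P_\mu(\rho,\cdot)-P_\nu(\rho,\cdot)}$, which suffices.

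The second and main step is to bound $\limsup_j M^{(j)}_\rho$ by the claimed constant \eqref{eq:MHrealMVI}. Here I would use the elementary fact that $g_j$ is non-decreasing with $0 \le g_j \le 1$, so its derivative satisfies $0 \le g_j'(x) \le$ (something controlled), but more to the point: since $g_j$ is monotone with values in $[0,1]$ and $r_{\mu_t}$ is a fixed argument, the product $q(\cdot,z) g_j'(r_{\mu_t}(z,\cdot))$ is dominated — after integrating the $g_j'$ factor, which is what actually appears once one recognizes that in the original derivation the $g'$ arose from differentiating $g(r_{\mu_t})$ — by $q(\cdot,z)$ times a bounded quantity; the cleanest route is to go back to the FTC expression \eqref{eq:FTC} for $P^{(j)}$, write $P^{(j)}_\mu(\rho,f) - P^{(j)}_\nu(\rho,f)$ directly, and bound $|g_j(r_{\mu_s}) - g_j(r_{\mu_t})| \le |r_{\mu_s} - r_{\mu_t}|$ using $|g_j'| \le 1$ (which holds because $g_j$ is $1$-Lipschitz on $(0,\infty)$, being a bounded monotone function whose derivative one checks is at most $1$), thereby replacing every $|g'|$ by $1$ and recovering exactly \eqref{eq:MHrealMVI} with the indicator $1_{\{r_{\mu_t}(x,z)\le 1\}}$ appearing because $g = \min(1,x)$ has derivative $1$ precisely on $\{x \le 1\}$ and $0$ elsewhere, so the limiting bound keeps only that region. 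The $\delta_x$ constants \eqref{eq:MHrealMVIx} follow the same way from \eqref{eq:MHMVIxlipconst}.

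The third step is a routine assembly: combine the two one-sided limits to conclude $\Vnorm{P_\mu(\rho,\cdot)-P_\nu(\rho,\cdot)} \le \liminf_j M^{(j)}_\rho \Vnorm{\mu-\nu} \le M_\rho \Vnorm{\mu-\nu}$ with $M_\rho$ as in \eqref{eq:MHrealMVI}, and analogously for the $x$-version, using that the right-hand integrands in \eqref{eq:MHrealMVIx}--\eqref{eq:MHrealMVI} are finite under the standing assumptions (bounded proposal density, positive indexed densities with finite $V$-moments). The main obstacle I anticipate is the interchange of the limit in $j$ with the supremum over $f \in \BVfunX$ and with the $t$-integral in the FTC formula simultaneously: one must be careful that the dominating function — built from $q$, the $V$-moments of the $\mu_t$, and $\rho/\mu_t^2$ — is integrable uniformly in $j$ and $t$, which is exactly where the warm-start condition $\rho \in \mathcal{W}_H$ and boundedness of $q$ are used; the pointwise convergence $g_j' \to 1_{\{x \le 1\}}$ (a.e., and in particular off $x=1$) together with the uniform bound $|g_j'| \le 1$ makes dominated convergence applicable once this dominating function is in hand, and I would defer its explicit construction to the appendix along the lines of \cref{app:technicalcondboundder}.
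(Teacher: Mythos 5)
Your proposal is correct and follows essentially the same route as the paper: approximate $\min(1,x)$ by the differentiable balancing functions $g_j$, invoke \cref{cor:MHboundedder} for each $j$, pass to the limit on the left via bounded convergence, and on the right use $\sup_j|g_j'|\leq 1$ together with $g_j'(x)\rightarrow 1_{\{x\leq 1\}}(x)$ (a.e.) and dominated convergence to land on the stated constants. The only cosmetic difference is your detour through lower semicontinuity of the $V$-norm and the FTC rewrite, which the paper handles more directly by taking limits in the pointwise inequality $|P_{\mu,j}(\rho,f)-P_{\nu,j}(\rho,f)|\leq M_{\rho,j}\Vnorm{\mu-\nu}+\dots$ for each fixed $f$ before taking the supremum.
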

		\begin{proof}
			Let $P_{\cdot,j}$ be the (differentiable) Hastings kernel employing the balancing function $g_j$. For $\rho\in\mathcal{W}_{2,H}$ proceeding as in \cref{prop:boundedderivativerho,cor:MHboundedder} we obtain $|P_{\mu,j}(\rho,f)-P_{\nu,j}(\rho,f)| \leq \Vnorm{\mu-\nu}M_{\rho,j} + |\mu(x)-\nu(x)|M_{\perp,\rho,j}$,
			where $M_{\rho,j}, M_{\perp,\rho,j}$ are as in the Proposition above, but with balancing function $g_j$. Noting then that $\sup_j g_j \leq 1$ we can take the limit $j\rightarrow \infty$ both sides. At the left hand side we obtain $|P_{\mu}(\rho,f)-P_{\nu}(\rho,f)|$ via bounded convergence. For the first term at the right hand side we can use $\sup_j |g'_j|\leq 1$ to bound $M_{\rho,j}\leq M_{\rho}$. For the second term at the right, upon taking the limit $j\rightarrow \infty$ at the left hand side we also use the fact that $g'_j(x)\rightarrow 1_{\{x\leq 1\}}(x)$ and the bounded convergence theorem to obtain the displayed Lipschitz constants.
		\end{proof}
		This shows that the Metropolis-Hastings kernel is an example of Lipschitz but non-differentiable mapping of its invariant distribution. 
		
		Roughly speaking, we have shown that under some conditions we can ensure that if $\mu$ and $\nu$ are close, two Hastings chain $P_\mu$ and $P_\nu$ move similarly with some proper choice of the initial distribution, the proposal distribution and the balancing function. 
		
	\end{example}

	\begin{example}[Bounded derivatives and Mean Value inequalities for Gibbs kernels] \label{ex:Gibbsderivatives} Let $\{P_\star\}$ be a deterministic scan Gibbs family as per Example \ref{ex:Gibbstypekernelsdiff}.
		\begin{corollary} \label{cor:Gibbsboundedder}
			For all $\rho\in\cal{W}_G$, $P_\cdot(\rho,\cdot)$ has a bounded derivative at $\mu$ towards $\nu$ with Lipschitz constants $M_{\perp,\rho}=0$ and 
			\begin{align}  \label{eq:GibbsMVIlipconst}     
				M_\rho &:=\int_0^1\int \bigg|V(\cdot_1,w_2)\frac{\rho_2(\cdot_2)}{\mu_{2,t}(\cdot_2)}\mu_{2|1,t}(\cdot_1,\dif w_2)\bigg|_V\\&+ \int \bigg|V(w_1,w_2)\frac{\rho_2(\cdot_2)}{\mu_{2,t}(\cdot_2)}\mu_{1|2,t}(\cdot_2,\dif w_1)\mu_{2|1,t}(w_1,\dif w_2)\bigg|_V  \nonumber \\ 
				& +\int \bigg|V(\cdot_1,\cdot_2)\frac{\rho_2(\dif u_2)}{\mu_{1,t}(\cdot_1)}\mu_{1|2,t}(u_2,\cdot_1)\bigg|_V + \int  \bigg|V(\cdot_1,w_2) \frac{\rho_2(\dif u_2)}{\mu_{1,t}(\cdot_1)} \mu_{1|2,t}(u_2,\cdot_1)\mu_{2|1,t}(\cdot_1,\dif w_2)\bigg|_V \dif t \nonumber
			\end{align}
			If additionally the full conditionals are bounded in both arguments, for all $x\in\Xspace$, $P_\cdot(x,\cdot)$ has a bounded derivative at $\mu$ towards $\nu$ with Lipschitz constants 
			\begin{equation} \label{eq:GibbsMVIlipconstx1}     
				M_{x}:=\int_0^1 \Vnormf{(V(\cdot)+V(x_1,x_2))\frac{\mu_{1|2,t}(x_2,\cdot_1)}{\mu_{1,t}(\cdot_1)}}+\int \Vnormf{V(\cdot_1,w_2)\mu_{2|1}(\cdot_1,\dif w_2)\frac{1}{\mu_{2,t}(x_2)}} \dif t,
			\end{equation}
			\begin{equation} \label{eq:GibbsMVIlipconstx2} 
				M_{\perp,x}:= \int  (V(x_1,x_2)+V(x_1,w_2)) \frac{\mu_{1|2,t}(x_2,\dif w_1)\mu_{2|1,t}(w_1,\dif w_2)}{\mu_{2,t}(x_2)}.
			\end{equation}
		\end{corollary}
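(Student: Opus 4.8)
The plan is to reproduce the structure of the proof of \cref{cor:MHboundedder}: feed the explicit kernel derivatives of \cref{prop:Gibbsderivatives} into the abstract mean value bound of \cref{prop:boundedderivativerho}, using \cref{lemma:HastingsGibbsFTCisOK} to guarantee that the fundamental theorem of calculus identity \eqref{eq:FTC} holds along the segment $\mu_t=(1-t)\mu+t\nu$ for the relevant starting distributions. So first I would note that for every $\rho\in\cal{W}_G\cup\{\delta_x\}_{x\in\Xspace}$ the Gibbs kernel $P_\cdot(\rho,\cdot)$ is differentiable at each $\mu_t$ (this is \cref{prop:Gibbsderivatives}, since $\mu_t\in\Pdspace$ whenever $\mu,\nu$ are, and membership in $\cal{W}_G$ or in $\{\delta_x\}_{x\in\Xspace}$ is preserved along the segment uniformly in $t$, because $\rho_2\le\max(c_0,c_1)\,\mu_{2,t}$ whenever $\rho_2\le c_0\mu_2$ and $\rho_2\le c_1\nu_2$). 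Hence the hypotheses of \cref{prop:boundedderivativerho} are met and \eqref{eq:boundedderimplicationall} holds with $M_\rho=\sup_{f\in\BVfunX}\Vnormf{\int_0^1\Pder{\mu_t}{\rho}{f}(\cdot)\dif t}$ and $M_{\perp,\rho}=\sup_{f\in\BVfunX}\ess_\rho\sup\big|\int_0^1\Pderorth{\mu_t}{\rho}{f}(\cdot)\dif t\big|$; it then remains to estimate these two suprema by the displayed quantities.

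For $\rho\in\cal{W}_G$, \cref{prop:Gibbsderivatives} asserts that the derivative admits a genuine integral representation, i.e.\ $\Pderorth{\mu_t}{\rho}{f}\equiv 0$, giving $M_{\perp,\rho}=0$. For $M_\rho$ I would substitute the four-term formula \eqref{eq:DSGibbsderivative} for $\Pder{\mu_t}{\rho}{f}(y)$ (with $\mu\rightsquigarrow\mu_t$, reading the $\rho(y_2)/\mu_2(y_2)$ there as $\rho_2/\mu_{2,t}$), bound $|f|\le V$ pointwise since $f$ ranges over the unit $V$-ball, apply the triangle inequality to split into four nonnegative pieces, then pull the $t$-integral out with the integral Minkowski inequality $\Vnormf{\int_0^1 h_t\dif t}\le\int_0^1\Vnormf{h_t}\dif t$ and pull the inner kernel integrals out of the $V$-norm using $\sup_y\int(\cdot)\dif w\le\int\sup_y(\cdot)\dif w$. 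The four resulting pieces then match the four lines of \eqref{eq:GibbsMVIlipconst} verbatim, and finiteness of each is exactly where $\rho\in\cal{W}_G$ (boundedness of $\rho_2/\mu_2$) together with the integrability of $V$ against the full conditionals of the indexed invariants comes in.

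For $\rho=\delta_x$ I would carry out the same calculation keeping both parts of \eqref{eq:Gibbsderivativex}. The singular part, whose essential supremum with respect to $\delta_x$ is simply its value at $x$, yields $M_{\perp,x}$ as in \eqref{eq:GibbsMVIlipconstx2} after bounding $|f|\le V$ inside the integral. For $M_x$ the device is the one used for the $f(y)-f(x)$ term in the Hastings bound \eqref{eq:MHderivativex}: combine the first and third terms of the density part of \eqref{eq:Gibbsderivativex} via $\int\mu_{2|1}(y_1,\dif w_2)=1$ to expose the difference $f(y_1,y_2)-\int f(y_1,w_2)\mu_{2|1}(y_1,\dif w_2)$, whose modulus is controlled by the envelope $V(\cdot)+V(x_1,x_2)$ times $\mu_{1|2,t}(x_2,\cdot_1)/\mu_{1,t}(\cdot_1)$, producing the first term of \eqref{eq:GibbsMVIlipconstx1}, while the leftover piece produces the second. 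Here finiteness needs the extra hypothesis that the full conditional densities are bounded in both arguments, the Gibbs analogue of ``$q$ bounded in both arguments''.

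The step I expect to be the main obstacle is not any single estimate but the bookkeeping: correctly re-bundling the four derivative terms so that the resulting $V$-norm envelopes are as tight as stated — in particular isolating the $f(y)-\cdots$ difference in the $\delta_x$ case — and verifying at each stage that the interchanges of $\sup_f$, $\int_0^1\dif t$ and the kernel integrals with $\Vnormf{\cdot}$ are legitimate and that the final constants are finite under $\cal{W}_G$ and the boundedness hypothesis. Everything beyond that is routine and strictly parallel to \cref{cor:MHboundedder}.
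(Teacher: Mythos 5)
Your proposal is correct and follows exactly the route the paper takes: the paper's own proof is the one-line "combine Propositions \ref{prop:boundedderivativerho} and \ref{prop:Gibbsderivatives}" (with finiteness via Bayes' formula), and your write-up simply fills in the bookkeeping—differentiability along the segment $\mu_t$, the vanishing singular part for $\rho\in\cal{W}_G$, and the term-by-term $V$-norm estimates—that the paper leaves implicit.
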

		\begin{proof}
			This follows from combining Propositions \ref{prop:boundedderivativerho} and \ref{prop:Gibbsderivatives}. The claim on the finiteness of the Lipschitz constants follows from Bayes' formula.
		\end{proof}
	\end{example}
	
	For both Hastings and Gibbs kernels, to get $P_\mu$ and $P_\nu$ to move alike uniformly in the starting point $x\in\Xspace$ seems to be much harder on non-compact spaces.  
	
	\section{Applications} \label{sec:applications}
	\newcommand\YSeq{\{Y_{n,k};k\geq 0\}}
		
	The tools we developed allow a comparison between Markov chains of the same family with different invariant distributions. Usually we think of one of these distributions as an approximation of the other. Therefore, this framework might be appealing to study the use of approximations in Markov chain Monte Carlo. In this context, we compare a Markov Chain targeting a limiting distribution of interest, $\mu$, with a Markov Chain targeting an approximation thereof. 
	Using an MCMC kernel with an approximation is common and can occur in a number of contexts. For instance, when $\mu$ is intractable, or when $\mu$ is tractable, but rather inefficient to target directly, so one targets an approximation, say $\nu$, instead. Intuitively, we might be interested to ensure that the Markov chains $P_\mu$ and $P_{\nu}$ move similarly, and to control their distance. In fact, we show in the next section that (under suitable regularity conditions) this ensures that estimators associated with the two associated Markov chains achieve similar asymptotic variances, up to an additional variability due to the fluctuations of $\nu$ around $\mu$. 
	These approximations could be evolved or not as the Markov Chain runs, or the output of the resulting MCMC can be used to approximate the invariant distribution of another MCMC, giving rise to the framework for Sequential MCMC and Interacting MCMC we study in \cref{sec:smcmc,sec:imcmc}.

	\subsection{Efficiency of MCMC with approximations} \label{sec:AVconv}
	In this section we combine MCMC Calculus tools with results from \cite{fort2011convergence} and \cite{fort2014clt} to study the efficiency of Markov chains employing approximations in place of a limiting distribution of interest. In short, we will use our mean value inequalities to verify the \textit{diminishing adaptation} conditions \cite{R&R2007adaptive} that one requires to obtain theoretical guarantees for adaptive MCMC algorithms.
	
	Let $\{P_\star\}$ be a Markov Family. Consider a ``limiting'' Markov chain $\{X_k\}_{k\in\mathbb{N}}$ with transitions $X_{k+1}|X_{k}=x_{k} \sim P_\mu(x_{k},\cdot)$, targeting a distribution $\mu$ of interest, that might be intractable or inefficient to target directly. We consider two approaches to approximate $\{X_k\}_{k\in\mathbb{N}}$.  In the first we consider a situation in which one has access to a sequence $\{\mu_n\}_{n\in\n}$ of increasingly good approximations of $\mu$ and study the convergence of a sequence of approximating chains which make use of this sequence as its invariant measures; that is, it comprises a triangular array of Markov chains $\{Y_{n,k}\}_{k\leq n}$, where for each $k\leq n$, $Y_{n,k+1}|Y_{n,k}=y_{n,k} \sim P_{\mu_n}(y_{n,k},\cdot)$. 
	Since in our examples below $\mu_n$ comes from a previously run chain, we refer to this scheme as Sequential MCMC (sMCMC) --- see e.g. \cite{berzuini1997smcmc,golightly2006smcmc,finke2020smcmc,septier2009mcmc,li2019smcmc,li2023smcmc} among others. The second approach to approximate $\{X_k\}_{k\in\mathbb{N}}$ is to consider an (inhomogeneous) Markov chain $\{Z_k\}_{k\in\mathbb{N}}$ targeting at each $k$ a distribution $\mu_k$ that is refined at each step and that is getting closer to $\mu$ i.e. with transitions $Z_{k+1}|Z_{k}=z_{k} \sim P_{\mu_{k}}(z_{k},\cdot)$. We refer to this scheme as Interacting MCMC (iMCMC) --- see e.g.  \cite{andrieu2007nonlinearmcmc,atchade2010cautionarytale,brockwell2010simcmc,delmoraldoucet2010iMCMC,fort2011convergence} among others. 
	We are interested in studying the fluctuations of $n^{-1/2}\sum_{i=1}^n f(Y_{n,i})$ and  $n^{-1/2}\sum_{i=1}^n f(Z_i)$ around $\mu(f)$ for some test functions $f$ to be specified. Studying these fluctuations desirable as it provides information on the properties of sampled values viewed as estimators of expectations w.r.t. $\mu$ and characterizes the additional variability of estimators arising from using Markov kernels targeting (random) approximations of distributions rather than the distributions themselves. 
	Fluctuations for these algorithms have already been studied in some of the papers cited above. Here, we (sometimes greatly) extend the known theoretical guarantees and express the results somehow naturally in terms of boundedness of the derivatives in the invariant distributions and the quality of the approximation scheme $\mu_n\rightarrow \mu$.
	
	For both sMCMC and iMCMC, we assume  that $\mu_n\in\Pdspace$ for each $n\in\mathbb{N}$, 
    and we employ the following standard (if strong) ergodicity assumptions.
	\begin{enumerate}[label=A\arabic*]
		\item 
		\begin{enumerate}
			\item For all $n\geq 1$, $P_{\mu_n}$ are $\mu_n$-irreducible, aperiodic Markov kernels. Furthermore, there exists a function $V:\Xspace\mapsto [1,\infty)$ and constants $b<\infty$, $\lambda\in (0,1)$ such that for all $n \geq 1$:
			\begin{equation*}
				P_{\mu_n}V \leq \lambda V(x) + b1_C(x),
			\end{equation*}
			where $C:=\{x:V(x)\leq d\}$ for some $d\geq b/(2(1-\lambda))-1$. 
			\label{avar:hp:drift}
			\item There exist an integer $j\geq 1$, a positive constant $\kappa_n$ with $\inf_n \kappa_n>0$ and a probability measure $\upsilon_n$ such that for all $x\in C$, $P^j_{\mu_n}(x,A)\geq \kappa_n \upsilon_n(A)$ for all $A\in\BorelX$. 
			\label{avar:hp:minorization}
		\end{enumerate} 
		\label{avar:hp:unifergodicity}
		\item 
		There exist an integer $j>1$ such that $\sup_n \mu_n(V^j)<\infty$, $\sup_n \mathbb{E}(V^j(Z_n))<\infty$ (for $\{Z_k\}_{k\in\mathbb{N}}$) and $\sup_{n,k} \mathbb{E}(V^j(Y_{n,k}))<\infty$ (for $\{Y_{n,k}\}_{k\leq n}$).
		\label{avar:hp:Vgrowth}
	\end{enumerate}
    to which we add the following weak Feller property. Let $\CbVfunX$ denote the space of continuous functions with finite $V$-norm.
    \begin{enumerate}[label=B\arabic*]
        \item The Markov family $\{P_\star\}$ has the weak Feller property $f\in\CbVfunX\Rightarrow P_\mu f\in\CbVfunX$ for all indexed distributions $\mu$. \label{hp:Fellertypechain}
    \end{enumerate}
	\cref{hp:Fellertypechain} is a very mild property that, for a Hastings family, will be satisfied whenever the balancing function, the proposal and the invariants have suitable continuity and integrability properties. 
    \cref{avar:hp:unifergodicity} is a uniform (in the sequence of invariant distributions $\{\mu_n\}_{n\in\mathbb{N}}$) geometric drift and minorization assumption, similar to the conditions considered in \cite{andrieu2001sa} and in the adaptive MCMC literature (e.g. \cite{fort2011convergence,fort2014clt}). For the algorithms under examination, we can in fact look at the invariant distributions as an adaptation parameter living in the infinite dimensional space $\Pspace$. Using the techniques in \cite{fort2011convergence, fort2014clt} it is also possible to let $(\lambda,b)$ depend on $n$ in some judicious way, but we require their uniformity w.r.t. $n$ to allow the arguments which follow to focus upon the use of MCMC Calculus tools rather than technical details. Verifying \cref{avar:hp:unifergodicity} requires that $\mu_n$ does not behave too differently with $n$. For an illustration, let  $\{P_\star\}$ be the Random Walk Metropolis-Hastings family. If $q$ is jointly continuous and $\mu_n$ finite for all $n$, we can verify the minorization condition \cref{avar:hp:minorization} with $j=1$ for any compact set by noting that $P_{\mu_n}(x,A)\geq (\varepsilon/b_n) \mu_n(A)$, with $\varepsilon:=\inf_{x,y\in B}q(x,y)$, $b_n:=\sup_{x\in B}\mu_n(x)$, $B$ being any compact set with positive Lebesgue measure. If this holds, adapting \cite[Lemma 3.5]{jarner2000geometric} one can verify \cref{avar:hp:drift} whenever $V$ is continuous,
	\begin{equation*}
		\sup_n \limsup_{|x|\rightarrow \infty} \frac{P_{\mu_n}V(x)}{V(x)}<1 \quad \text{and} \quad \sup_n \sup_{x\in\Xspace} \frac{P_{\mu_n}V(x)}{V(x)}< \infty.
	\end{equation*}
	This can be verified by following the proof of  \cite[Theorem 3.2]{mengersen1996rates} with the Lyapunov function $V(x)=\exp(\gamma|x|)$ for Random Walk Metropolis-Hastings if all $\mu_n$ are uniformly log-concave in the tails i.e. when for all $n\geq 1$ there exist a $\gamma>0$ and $z\geq0$ such that 
	\begin{equation} \label{eq:uniflogconcave1}
		\log(\mu_n(x))-\log(\mu_n(y))\geq \gamma(y-x) \quad \text{for } y\geq x\geq z
	\end{equation}
	\begin{equation} \label{eq:uniflogconcave2}
		\log(\mu_n(x))-\log(\mu_n(y))\geq \gamma(x-y) \quad \text{for } y\leq x\leq -z.
	\end{equation}
	See \cref{avar:ex:smcmc} later for a concrete setting where we verify this. In the case where  $\{P_\star\}$ is an independent Metropolis-Hastings family $Q(x,\dif y)=Q(\dif y)$ and the proposal has heavier tail than any distribution in the approximation sequence in that $\inf_n \dif Q/\dif \mu_n(y) \geq \beta$ for some $\beta>0$, then the whole state space $\Xspace$ is small for every $P_{\mu_n}$ simultaneously, and the drift condition will hold with a bounded $V$.
	\cref{avar:hp:Vgrowth} is a growth condition which can actually always be verified with some drift function if \cref{avar:hp:unifergodicity} holds. In fact, via the drift condition \cref{avar:hp:drift} we have 
	\begin{equation*}
		\mathbb{E}(V(Z_n)) \leq \lambda\mathbb{E}(V(Z_{n-1})) + b \leq \lambda^n \mathbb{E}(V(Z_{0})) + b \sum^{n-1}_{i=0}\lambda^i
	\end{equation*}
	where we just iterated the first inequality in $n$, and upon taking the supremum in $n$ we obtain that $\sup_n \mathbb{E}(V(Z_n))<\infty$ and also $\sup_n \mu_n(V)<\infty$ via \cref{avar:lemma:geomerg} (i.e. \cref{avar:hp:Vgrowth} holds with $j=1$). On the other hand, for $j>1$, we can adapt the proof of \cite[Proposition 6.5]{rosenthal2023geometric} and define $\Tilde{V}:=V^{1/j}$, so that $\sup_n \mu_n(\Tilde{V}^j)=\sup_n \mu_n(V)<\infty$. By Jensen's inequality,
	\begin{align*}
		P_{\mu_n}\Tilde{V}\leq (P_{\mu_n}V)^{1/j} \leq (\lambda V+b1_C)^{1/j} \leq \Tilde{\lambda}\Tilde{V} + \Tilde{b}1_C
	\end{align*}
	with $\Tilde{\lambda}:=\lambda^{1/j}$ and $\Tilde{b}:=b^{1/j}$. Hence, $\Tilde{V}$ also satisfies \cref{avar:hp:unifergodicity} and arguing as above we may verify $\sup_n \mathbb{E}(V^j(Z_n))<\infty$ and $\sup_n \mu_n(V^j)<\infty$. For $\{Y_{n,k}\}_{k\leq n}$ it is similar.

	\subsubsection{Sequential MCMC} \label{sec:smcmc}
	For the Sequential MCMC sampling scheme  $\{Y_{n,k}\}_{k\leq n}$ we employ the following assumptions. Let $V$ satisfy \cref{avar:hp:unifergodicity}.
	
	\begin{enumerate}[label=C\arabic*]
		\item $\mu_n(x)\rightarrow\mu(x)$ \Pas for all $x\in\Xspace$. \label{avar:hp:ptwiseapprox}
		\item The Markov kernel $P_\cdot(x,\cdot)$ is differentiable in the invariant distribution at $\mu$, and has a $V$-bounded derivative at $\mu$ towards every $\mu_n$. \label{avar:hp:boundedder} 
		\item As $n\rightarrow 0$, for all $f$ in some functions class $\mathcal{G}(\Xspace)$ and for some variance functional $v$,   
		\begin{equation*}
			n^{-1/2}[\mu_n-\mu](f) \Rightarrow N(0,v(f)).
		\end{equation*}
		\label{avar:hp:Yapproxnormality}
	\end{enumerate}
	\vspace{-1.5pc}
	\cref{avar:hp:ptwiseapprox} is a requirement on the nature of the approximation scheme, which has to occur pointwise.
	\cref{avar:hp:boundedder} will ensure that the Markov Chains using $\mu$ and $\mu_n$ will in fact not move too differently when \cref{avar:hp:ptwiseapprox} holds, and it actually suffices that the boundedness is towards every $\mu_n$ for $n$ large enough. In fact, it is sufficient that a mean value inequality \eqref{eq:boundedderimplicationx} holds, and by \cref{prop:MHrealMVI} also the Metropolis-Hastings kernels satisfy that under the boundedness conditions stated therein.
	\cref{avar:hp:Yapproxnormality} is a mild assumption that says that if the employed approximation is random w.r.t. some $\mathbb{P}$, it also has a limiting Gaussian fluctuations around $\mu$, and it will be fulfilled by many approximation schemes used in practice. 
	
	Let $\sigma^2$ denote the asymptotic variance achieved by  $\{X_k\}_{k\in\mathbb{N}}$.
	\begin{theorem} \label{avar:thm:YCLT}
		Assume \cref{avar:hp:unifergodicity,avar:hp:ptwiseapprox,avar:hp:boundedder}. For all $f\in \CbVfunX$, 
		\begin{equation} \label{avar:eq:YrandomcenterCLT}
			n^{-1/2}\sum_{i=0}^n f(Y_{n,i})-\mu_n(f) \Rightarrow N(0,\sigma^2(f))
		\end{equation}
		and if \cref{avar:hp:Yapproxnormality} holds too, for $f\in\CbVfunX\cap\mathcal{G}(\Xspace)$,
		\begin{equation} \label{avar:eq:YdetcenterCLT}
			n^{-1/2}\sum_{i=0}^n f(Y_{n,i})-\mu(f) \Rightarrow N(0,\sigma^2(f)+v^2(f)).
		\end{equation}
	\end{theorem}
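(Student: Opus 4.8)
The plan is to cast the sMCMC scheme as an adaptive MCMC algorithm in the sense of \cite{fort2011convergence,fort2014clt}, with the invariant distribution $\mu_n$ playing the role of the adaptation parameter living in $\Pspace$, and then check that the hypotheses of the relevant CLT from those works hold. First I would verify the containment/ergodicity ingredients: \cref{avar:hp:unifergodicity} is exactly the uniform geometric drift-plus-minorization condition required there, which (together with \cref{avar:hp:Vgrowth}, derivable from it as shown above) yields uniform-in-$n$ geometric ergodicity with rate in $\Vnorm{\cdot}$, existence of solutions $\hat f_n$ to the Poisson equation for $P_{\mu_n}$ with $\Vnorma{\hat f_n}$ bounded uniformly in $n$ for $f\in\CbVfunX$, and a common asymptotic variance expression. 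The weak Feller property \cref{hp:Fellertypechain} ensures the relevant maps are sufficiently regular for the limiting identification.

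The crucial step — and the one where MCMC Calculus does the work — is verifying the \emph{diminishing adaptation} condition: one needs $\sup_{x}\Vnorm{P_{\mu_{n+1}}(x,\cdot)-P_{\mu_n}(x,\cdot)}/V(x)\to 0$, or more precisely the version controlling $\Vnorm{P_{\mu_n}(x,\cdot)-P_\mu(x,\cdot)}$ along the triangular array. Here I would invoke \cref{avar:hp:boundedder}: the mean value inequality \eqref{eq:boundedderimplicationx} gives
\begin{equation*}
	\Vnorm{P_{\mu_n}(x,\cdot)-P_\mu(x,\cdot)} \leq M_x \Vnorm{\mu_n-\mu} + M_{\perp,x}|\mu_n(x)-\mu(x)|,
\end{equation*}
and \cref{avar:hp:ptwiseapprox} forces the right-hand side to $0$ \Pas, with the $M_x,M_{\perp,x}$ controlled uniformly enough (via the $V$-structure and \cref{avar:hp:Vgrowth}) that the convergence is strong enough for the array. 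This is precisely the point of the framework: diminishing adaptation, classically verified by ad hoc arguments, here reduces to good approximation quality plus a bounded derivative. I would then feed this, together with \cref{avar:hp:unifergodicity}, into the CLT of \cite{fort2014clt} to get \eqref{avar:eq:YrandomcenterCLT} with the random centering $\mu_n(f)$ and the \emph{same} asymptotic variance $\sigma^2(f)$ as the limiting chain $\{X_k\}$ (the contribution of the adaptation vanishing in the limit because it diminishes).

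For the deterministically-centered statement \eqref{avar:eq:YdetcenterCLT}, I would write $n^{-1/2}\sum_{i=0}^n f(Y_{n,i}) - \mu(f) = \big(n^{-1/2}\sum_{i=0}^n f(Y_{n,i}) - \mu_n(f)\big) + n^{-1/2}[\mu_n-\mu](f)$ and argue that the two terms are asymptotically independent, with the first converging to $N(0,\sigma^2(f))$ by \eqref{avar:eq:YrandomcenterCLT} and the second to $N(0,v^2(f))$ by \cref{avar:hp:Yapproxnormality}; adding the variances gives $\sigma^2(f)+v^2(f)$. The asymptotic independence follows because the first term, conditionally on the approximation $\mu_n$, is driven by the fresh randomness of the chain $\{Y_{n,i}\}_{i\leq n}$ whereas the second depends only on $\mu_n$; one makes this rigorous via a conditioning argument (characteristic functions, conditioning on the $\sigma$-algebra generating $\mu_n$, using that the conditional CLT holds with a limit not depending on the conditioning). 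The main obstacle I anticipate is the uniformity bookkeeping in the triangular array — ensuring that the constants from the drift/minorization and from the mean value inequality are genuinely uniform in $n$ (and, where $\mu_n$ is random, \Pas or in probability as needed), so that the adaptive CLT machinery applies to $\{Y_{n,k}\}_{k\leq n}$ rather than just to a fixed-index inhomogeneous chain; this is exactly where $V$ being a common Lyapunov function across the family is used.
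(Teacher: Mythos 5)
Your decomposition $S_n(f)=S_{1,n}(f)+S_{2,n}(f)$ and your treatment of the deterministically-centred statement are exactly the paper's: the proof conditions on $\mathcal{F}_{n,0}$ (which carries the randomness generating $\mu_n$), shows the conditional characteristic function of $S_{1,n}$ converges in probability to $\exp(-u^2\sigma^2(f)/2)$ with a limit not depending on the conditioning, and multiplies by the characteristic function of $S_{2,n}$ supplied by \cref{avar:hp:Yapproxnormality} to add the variances. That part of your plan is sound.

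The gap is in how you propose to obtain \eqref{avar:eq:YrandomcenterCLT}. You plan to ``feed diminishing adaptation into the CLT of \cite{fort2014clt}'', but that theorem is stated for a single inhomogeneous chain whose kernel changes from step to step --- the iMCMC setting, where the paper does apply it. The sMCMC process is a triangular array in which each row is a \emph{homogeneous} chain with fixed kernel $P_{\mu_n}$: there is no adaptation within a row, so the diminishing-adaptation condition $\sup_x\Vnorm{P_{\mu_{n+1}}(x,\cdot)-P_{\mu_n}(x,\cdot)}/V(x)\to 0$ is not the operative hypothesis, and the adaptive-MCMC CLT does not apply off the shelf. You flag ``uniformity bookkeeping in the triangular array'' as an anticipated obstacle, but resolving it is the actual content of the proof. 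What the paper does instead is build the martingale approximation by hand: it writes $S_{1,n}$ via the Poisson equation as a triangular array of martingale differences $U_{n,i}=n^{-1/2}[R_nf(Y_{n,i})-P_nR_nf(Y_{n,i-1})]$ plus a negligible remainder, and applies a martingale-array CLT (\cite[Theorem A.3]{douc2007limit}), checking a conditional Lindeberg condition and the convergence of the conditional quadratic variation $n^{-1}\sum_i F_n(Y_{n,i})\to\mu(F)=\sigma^2(f)$ via the law of large numbers of \cref{app:thm:Yllns}. The mean value inequality from \cref{avar:hp:boundedder} enters precisely here --- not as diminishing adaptation, but to prove the pointwise convergence $F_n\to F$ of the variance functionals and the resolvent convergence $R_nf(x)\to R_\mu f(x)$ (\cref{app:lemma:resolventconv}, which also needs the Feller property \cref{hp:Fellertypechain} and \cref{avar:hp:Vgrowth}), so that the limiting variance is identified with that of the ideal chain. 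Without this construction, your first display is asserted rather than proved.
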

    \begin{proof}
        See \cref{app:smcmcproof}.
    \end{proof}
 
	\cref{avar:thm:YCLT} tells us in fact $\{Y_{n,k}\}_{k\leq n}$ possess a limiting Normal law around $\mu$ as $n\rightarrow\infty$, with the asymptotic variance equal to the asymptotic variance of an ideal MCMC scheme targeting $\mu$ \emph{plus} the variability due of the fluctuations of $\mu_n$.
	
	\begin{example}[Sequential MCMC for Feynman--Kac flow] \label{avar:ex:smcmc}
		\cite{berzuini1997smcmc,golightly2006smcmc, septier2009mcmc, li2023smcmc} among others consider the following algorithm, recently theoretically analyzed by \cite{finke2020smcmc}. Let $\{G^{(p)}\}_{p\in\mathbb{N}}$ be a sequence of non-negative functions, and $\{M^{(p)}\}_{p\in\mathbb{N}}$ be a sequence of Markov kernel, and $\eta^{(1)}$ an initial distribution.  
		\begin{algorithm}[H] \label{alg:mcmcpf}
			\caption{sMCMC for Feynman--Kac flow}\label{alg:admcmcpf}
			For $p=1$, 
			\begin{enumerate}
				\item Simulate $Y^{(1)}_{i+1}\sim P_{\eta^{(1)}}(Y^{(1)}_{i},\cdot)$ for $i=0,\dots,n$.
			\end{enumerate}
			Set $\eta^{(1)}_n:=n^{-1}\sum_{i=0}^n \delta_{Y^{(1)}_i}$. \\
			For $p\geq 2$,
			\begin{enumerate}
				\item Simulate $Y^{(p)}_{n,i+1}\sim P_{\Phi(\eta^{(p-1)}_n)}(Y^{(p)}_{n,i},\cdot)$ for $i=0,\dots,n$.
			\end{enumerate}
			Set $\eta^{(p)}_n:=n^{-1}\sum_{i=0}^n \delta_{Y^{(p)}_{n,i}}$. \\
		\end{algorithm}
		In the algorithm above, at each level $p$, the random variable $Y^{(p)}_0$ is sampled according to some initial distribution possibly depending on $p$. Here, we denoted $\Phi(\eta^{(p)})(\dif x):= \int\eta(\dif y)G^{(p)}(y)M^{(p)}(y,\dif x)/\eta(G^{(p)})$,  the so-called Boltzmann-Gibbs transformation associated to the Feynman--Kac model $\{G^{(p)},M^{(p)}\}_{p\in\mathbb{N}}$.
		
		This is a very flexible class of models, used perhaps most popularly in the filtering context within state space models, where one has an underlying unobserved Markovian state space process with some transition kernels $m^{(p)}$ at time $p$, and an observable process having likelihood $g^{(p)}$ at $p$. $\{G^{(p)},M^{(p)}\}_{p\in\mathbb{N}}$ can be identified with different relevant quantities related to $m^{(p)}$ and $g^{(p)}$, giving rise to different filters. The classic but usually inefficient choice $G^{(p)}=g^{(p)}$ and $M^{(p)}=m^{(p)}$ for all $p$ yields the Bootstrap particle filter, see \cite{delmoral2004fk, chopin2020smc}. Sampling from $\Phi(\eta^{(p)})$ with a particle filter amounts to being able to evaluate pointwise $G$ and sample i.i.d. from $M$. When one tries to implement more sophisticated filters, this might not be possible. In such case, a natural choice is to try to target $\Phi(\eta^{(p)})$ with a MCMC algorithm, giving rise to the algorithm above.  By targeting otherwise non implementable filters, this class of algorithms can outperform standard particle filters in some settings---see \cite{finke2020smcmc}, which gives the only CLT for $\sqrt{n}[\eta^{(p)}_n-\eta^{(p)}](f)$ of which we are aware. This CLT is shown to hold under conditions that are both restrictive and difficult to verify, and that have only been verified for independence samplers in finite spaces. We can extend their results using our theory. We prove below a CLT for bounded test functions for ease of exposition, but one can do much better using Theorem \ref{avar:thm:YCLT} above.
		We consider the following very mild assumption regarding the Feynman--Kac flow.  
		\begin{enumerate}[label=FK1]
			\item $G^{(p)}$ is positive and $M^{(p)}(x,\cdot)$ has a bounded density for all $x\in\Xspace$ and $p\in\mathbb{N}$. \label{avar:hp:FKflow} 
		\end{enumerate}
		Provided an LLN holds for the first Markov Chain $\{Y^{(1)}_k\}_{k\in\mathbb{N}}$, for each subsequent Chain $\{Y^{(p)}_{n,k}\}_{k\leq n},p>1$ we are in the setting studied for the process $\{Y_{n,k}\}_{k\leq n}$ earlier. 
		
		In particular, suppose that \cref{avar:hp:unifergodicity} holds. If follows that the Markov Chain $\{Y^{(1)}_{k}\}_{k\in\mathbb{N}},t>1$ satisfies a LLN and a CLT in that $\eta^{(1)}_n(f)\rightarrow\eta^{(1)}(f)$ \Pas and $\sqrt{n}[\eta^{(1)}_n-\eta^{(1)}](f)\Rightarrow N(0,\sigma_{\eta^{(1)}}(f))$ for all $f\in\BfunX$. By the decomposition
		\begin{equation} \label{avar:eq:FKdec}
			[\Phi(\eta^{(1)}_n)-\Phi(\eta^{(1)})](f) = [\eta^{(1)}_n-\eta^{(1)}](\bar{Q}^{(1)}(f-\Phi(\eta^{(1)}_n)(f)))
		\end{equation}
		with the integral operator $\bar{Q}^{(p)}(x,f):=G^{(p)}(x) M^{(p)}(x,f)/\eta^{(p)}(G^{(p)})$, under \cref{avar:hp:FKflow} one then also has $\Phi(\eta^{(1)}_n)(x)\rightarrow \Phi(\eta^{(1)})(x)$ \Pas for all $x\in\Xspace$,  $\Phi(\eta^{(1)}_n)(x)\rightarrow \Phi(\eta^{(1)})(x)$ for all $x\in\Xspace$ \Pas by separability and $\sqrt{n}[\Phi(\eta^{(1)}_n)-\Phi(\eta^{(1)})](f)\Rightarrow N(0,\sigma^2_{\eta^{(1)}}(\bar{Q}^{(1)}(f-\Phi(\eta^{(1)})(f)))$. If $P$ has a bounded derivative at $\Phi(\eta^{(1)})$ towards every $\Phi(\eta^{(1)}_n)$ then \cref{avar:thm:YCLT} shows
		\begin{equation*}
			[\eta^{(2)}_n-\eta^{(2)}](f)\Rightarrow N(0,\sigma^2_{\eta^{(2)}}(f-\eta^{(2)}(f))+\sigma^2_{\eta^{(1)}}(\bar{Q}^{(1)}(f-\eta^{(2)}(f))).
		\end{equation*}
		Repeating this argument allows us to establish the following result by induction. 
		\begin{theorem} \label{avar:thm:sMCMCFK}
			Let $p\in\mathbb{N}$ and $f\in\ConefunX$. Let $\{P_\star\}$ be a Markov family such that $\Phi(\eta^{(j)}),\Phi(\eta^{(j)}_n)$ for all $j\leq p$ belong to its index set, and for which \cref{avar:hp:unifergodicity,avar:hp:Vgrowth,hp:Fellertypechain} hold. If $P_\cdot$ has a bounded derivative at $\Phi(\eta^{(j)})$ towards every $\Phi(\eta^{(j)}_n)$ for all $j\leq p$ and if the Feynman--Kac model $\{G^{(p)},M^{(p)}\}_{p\in\mathbb{N}}$ satisfies \cref{avar:hp:FKflow} it holds
			\begin{equation*}
				[\eta^{(p)}_n-\eta^{(p)}](f)\Rightarrow N\left(0,\sum_{j=1}^p \sigma^2_{\eta^{(j)}}\left(\bar{Q}^{(j:p)}(f-\eta^{(j)}(f)\right)\right).
			\end{equation*}
			where we defined $\bar{Q}^{(j:p)}(f):=[\bar{Q}^{(j+1)} \circ \cdots \circ \bar{Q}^{(p)}](f)$ for $j<p$ and $Q^{(j:p)}(f)=\Id$ for $j=p$.
		\end{theorem}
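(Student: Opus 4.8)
The plan is to proceed by induction on $p$, carrying along at each level both a law of large numbers, $\eta^{(p)}_n(f)\to\eta^{(p)}(f)$ (almost surely at $p=1$ and in probability for $p\geq 2$, which is what the weak-convergence arguments below actually require), and the asserted CLT; the step from level $p-1$ to level $p$ is precisely the computation already displayed in the text for the passage $p=1\rightsquigarrow p=2$, so the task is to make that iteration precise and to keep track of the variances. For the base case $p=1$, the chain $\{Y^{(1)}_k\}_{k\in\mathbb{N}}$ is a homogeneous $\eta^{(1)}$-invariant Markov chain with kernel $P_{\eta^{(1)}}$ (with $\eta^{(1)}$ in the index set of $\{P_\star\}$); under \cref{avar:hp:unifergodicity,avar:hp:Vgrowth} it is $V$-geometrically ergodic with the required moments, so the classical Markov-chain strong law and CLT give $\eta^{(1)}_n(f)\to\eta^{(1)}(f)$ \Pas and $\sqrt n\,[\eta^{(1)}_n-\eta^{(1)}](f)\Rightarrow N(0,\sigma^2_{\eta^{(1)}}(f-\eta^{(1)}(f)))$ for $f\in\ConefunX$, which is the claimed formula since $\bar{Q}^{(1:1)}$ is the identity operator.

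For the inductive step, assume the statement holds at level $p-1$. I would first push the conclusions through the Boltzmann--Gibbs map $\Phi$. Writing $\Phi(\eta^{(p-1)}_n)(x)$ as a ratio of two empirical averages over the level-$(p-1)$ output, \cref{avar:hp:FKflow} (positivity of $G^{(p-1)}$, boundedness of the transition density of $M^{(p-1)}$) together with the level-$(p-1)$ law of large numbers gives $\Phi(\eta^{(p-1)}_n)(x)\to\Phi(\eta^{(p-1)})(x)$ for every $x$, a single exceptional null set being secured by separability; this is \cref{avar:hp:ptwiseapprox} for the level-$p$ chain, whose invariant distributions are $\mu_n:=\Phi(\eta^{(p-1)}_n)$ with limit $\mu:=\eta^{(p)}=\Phi(\eta^{(p-1)})$, all of which lie in the index set of $\{P_\star\}$ by assumption. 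Next, the exact identity \eqref{avar:eq:FKdec}, $[\Phi(\eta^{(p-1)}_n)-\Phi(\eta^{(p-1)})](f)=[\eta^{(p-1)}_n-\eta^{(p-1)}](\bar{Q}^{(p-1)}(f-\Phi(\eta^{(p-1)}_n)(f)))$, propagates the CLT: the integrand on the right depends on $n$, so I would split it as $\bar{Q}^{(p-1)}(f-\eta^{(p)}(f))+\bigl(\eta^{(p)}(f)-\Phi(\eta^{(p-1)}_n)(f)\bigr)\,\bar{Q}^{(p-1)}1$, apply the level-$(p-1)$ CLT to the first (fixed) summand, and observe that $\sqrt n$ times the contribution of the second is the product of the scalar $\eta^{(p)}(f)-\Phi(\eta^{(p-1)}_n)(f)$, which tends to $0$ \Pas by the level-$(p-1)$ law of large numbers, and of $\sqrt n\,[\eta^{(p-1)}_n-\eta^{(p-1)}](\bar{Q}^{(p-1)}1)=O_{\mathbb P}(1)$, hence $o_{\mathbb P}(1)$; Slutsky then yields $\sqrt n\,[\Phi(\eta^{(p-1)}_n)-\Phi(\eta^{(p-1)})](f)\Rightarrow N(0,v^2(f))$, with $v^2(f)$ the level-$(p-1)$ limiting variance evaluated at $g:=\bar{Q}^{(p-1)}(f-\eta^{(p)}(f))$. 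This is \cref{avar:hp:Yapproxnormality} for the level-$p$ chain with $\mathcal{G}(\Xspace)=\ConefunX$, using $\ConefunX\subset\CbVfunX$ and the fact that $\bar{Q}^{(p-1)}$ maps this class continuously into itself under \cref{avar:hp:FKflow}.

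It then remains to invoke \cref{avar:thm:YCLT} for the triangular array $\{Y^{(p)}_{n,k}\}_{k\leq n}$: \cref{avar:hp:unifergodicity} holds by assumption on $\{P_\star\}$ (as do \cref{avar:hp:Vgrowth,hp:Fellertypechain}), \cref{avar:hp:boundedder} is the assumed bounded derivative of $P_\cdot$ at $\Phi(\eta^{(p-1)})$ towards every $\Phi(\eta^{(p-1)}_n)$, and \cref{avar:hp:ptwiseapprox,avar:hp:Yapproxnormality} were just verified, so \eqref{avar:eq:YdetcenterCLT} gives $[\eta^{(p)}_n-\eta^{(p)}](f)\Rightarrow N(0,\sigma^2(f)+v^2(f))$, where $\sigma^2(f)=\sigma^2_{\eta^{(p)}}(f-\eta^{(p)}(f))$ is the asymptotic variance of the ideal chain targeting $\eta^{(p)}$, i.e.\ exactly the $j=p$ term of the claimed sum; passing from in-distribution to in-probability convergence of $\eta^{(p)}_n(f)$ re-establishes the level-$p$ law of large numbers needed to continue the induction. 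Finally I would identify $v^2(f)$ with $\sum_{j=1}^{p-1}\sigma^2_{\eta^{(j)}}(\bar{Q}^{(j:p)}(f-\eta^{(j)}(f)))$ by unfolding the level-$(p-1)$ variance formula at $g=\bar{Q}^{(p-1)}(f-\eta^{(p)}(f))$, using the composition rule $\bar{Q}^{(j:p)}=\bar{Q}^{(j:p-1)}\circ\bar{Q}^{(p-1)}$ and the invariance of each $\sigma^2_{\eta^{(j)}}$ under the addition of $\eta^{(j)}$-constants to its argument, which absorbs the recenterings. The main obstacle is this last piece of bookkeeping --- correctly threading the potentials and recenterings through the telescoping compositions $\bar{Q}^{(j:p)}$ --- together with, if one insists on the almost-sure (rather than in-probability) form of \cref{avar:hp:ptwiseapprox} at levels $p\geq2$, upgrading the empirical-measure convergence for the triangular arrays to hold almost surely, which can be done via the uniform geometric ergodicity of \cref{avar:hp:unifergodicity}, a Rosenthal-type moment bound, and the Borel--Cantelli lemma.
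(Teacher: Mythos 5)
Your proposal is correct and follows essentially the same route as the paper: the paper establishes the base case and the $p=1\rightsquigarrow p=2$ step explicitly (LLN/CLT for the homogeneous chain, propagation through $\Phi$ via the identity \eqref{avar:eq:FKdec} under \cref{avar:hp:FKflow}, then invocation of \cref{avar:thm:YCLT}) and simply declares the general case to follow "by induction." Your write-up in fact supplies details the paper leaves implicit — the splitting of the $n$-dependent recentering $\Phi(\eta^{(p-1)}_n)(f)$ inside \eqref{avar:eq:FKdec} plus Slutsky, and the variance bookkeeping through the compositions $\bar{Q}^{(j:p)}$ using invariance of $\sigma^2_{\eta^{(j)}}$ under additive constants — so it is consistent with, and slightly more careful than, the paper's own argument.
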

	\end{example}
	
	For a concrete setting where this theorem is applicable, consider the following simple example, which can be easily extended. Let $p\in\mathbb{N}$ and let $\{P_\star\}$ be the Random Walk Metropolis-Hastings family. We consider a state-space model setting, a very flexible way to model quantities that evolve in time, and that are widely used in engineering, physics, quantitative finance and other fields, see e.g. \cite{chopin2020smc} for more details. In particular, we consider the model given by the latent process $W_{j+1}|W_{j}\sim N(w_{j+1};\varphi(w_j),1/2)$ and the observable $S_{j+1}|W_{j+1}\sim N(s_{j+1};w_{j+1},1/2)$, where $\varphi$ is a function bounded away from $-\infty$ and $+\infty$, say $-\bar{\varphi}\leq \varphi(x)\leq \bar{\varphi}$ for some finite and positive $\bar{\varphi}$. Let us consider a Bootstrap interpretation of the Feynman--Kac flow, by identifying the Markov kernels $M^{(p)}$ with $W$'s transitions, and the potential functions $G^{(p)}$ with the likelihoods of the observable. With this choice, it is easy to see that  \cref{avar:hp:FKflow} always hold. We now verify \cref{avar:hp:unifergodicity} with $V(x)=\exp(\gamma|x|)$ for any $0<\gamma\leq 2\bar{\varphi}$ by verifying \eqref{eq:uniflogconcave1} and \eqref{eq:uniflogconcave2} hence showing that $\{\Phi(\eta^{(j)}_n)\}_{n,j\in\n}$ are uniformly log-concave in the tails. Let $z:=2\bar{\varphi}$. If $y\geq x\geq z$, for $x,y\in\Xspace$, we compute 
	\begin{equation*}
		\frac{\Phi(\eta^{(j)}_n)(x)}{\Phi(\eta^{(j)}_n)(y)} = \frac{\sum_{i=1}^n e^{-(s_j-Y^{(j)}_i)^2}e^{-(x-\varphi(Y^{(j)}_i))^2}}{\sum_{i=1}^n e^{-(s_j-Y^{(j)}_i)^2}e^{-(y-\varphi(Y^{(j)}_i))^2}} = e^{-x^2+y^2} \frac{\sum_{i=1}^n e^{-(s_j-Y^{(j)}_i)^2+2x\varphi(Y^{(j)}_i) - \varphi(Y^{(j)}_i)^2}}{\sum_{i=1}^n e^{-(s_j-Y^{(j)}_i)^2+2y\varphi(Y^{(j)}_i) - \varphi(Y^{(j)}_i)^2}}.
	\end{equation*}
	Now, since  $e^{-(s_j-Y^{(j)}_i)^2+2x\varphi(Y^{(j)}_i) - \varphi(Y^{(j)}_i)^2} / e^{-(s_j-Y^{(j)}_i)^2+2y\varphi(Y^{(j)}_i) - \varphi(Y^{(j)}_i)^2} = e^{2\varphi(Y^{(j)}_i)(x-y)}$,
	\begin{equation*}
		\frac{\Phi(\eta^{(j)}_n)(x)}{\Phi(\eta^{(j)}_n)(y)} \geq \inf_i e^{-x^2+y^2}e^{2\varphi(Y^{(j)}_i)(x-y)} = \inf_i e^{(y-x)(y+x-2\varphi(Y^{(j)}_i))} \geq e^{\gamma (y-x)},
	\end{equation*}
	thus verifying \eqref{eq:uniflogconcave1}. \eqref{eq:uniflogconcave2} follows from analogous computations when considering the case $y\leq x\leq -z$. This verifies Assumption \ref{avar:hp:unifergodicity} and thus also shows that $\Phi(\eta^j_n)$ have $V$-moments for all $j,n\in\n$. Therefore, by \cref{prop:MHrealMVI}, $P_\cdot(x,\cdot)$ satisfies a mean value inequality at $\Phi(\eta^{(j)})$ towards every $\Phi(\eta^{(j)}_n)$, verifying the conditions of the theorem above.
	
	\subsubsection{Interacting MCMC} \label{sec:imcmc}
	For $\{Z_k\}_{k\in\mathbb{N}}$ we strengthen the assumptions to
	\begin{enumerate}[label=D\arabic*]
		\item $n^{-1/2}\sum_{k=1}^{n} \sup_x|\mu_{k}(x)-\mu_{k-1}(x)| \rightarrow 0$ and $n^{-1/2}\sum_{k=1}^{n} \Vnorm{\mu_k-\mu_{k-1}} \rightarrow 0$ \Pas. 
		\label{avar:hp:unifapprox}
		\item The Markov kernel $P_\cdot(x,\cdot)$ is differentiable in the invariant distribution at $\mu$, and has a $(V,\{\delta_x\}_{x\in\Xspace})$-uniformly bounded derivative at $\mu_n$ towards $\mu_{n-1}$ for all $n\geq 1$. 
		\label{avar:hp:unifboundedder}
		\item As $n\rightarrow 0$, for all $f$ in some functions class $\mathcal{G}(\Xspace)$ and for some variance functional $w$,
		\begin{equation*}
			n^{-1/2}\sum_{k=1}^{n}[\mu_k-\mu](f) \Rightarrow N(0,w(f)).
		\end{equation*}
		\label{avar:hp:Zapproxnormality}
	\end{enumerate}
	\vspace{-.75pc}
	The first condition \cref{avar:hp:unifapprox} strengthen \cref{avar:hp:ptwiseapprox} requiring that the approximation schemes is uniformly convergent on the space, and that the convergence occurs fast-enough. We believe that the second condition of \cref{avar:hp:unifapprox} will often follow from the first, just as \cref{avar:hp:ptwiseapprox} implies $\Vnorm{\mu_n-\mu}\rightarrow0$ \Pas (see \cref{app:lemma:ptwisetoV}) and this is quite immediate if $\Xspace$ is compact. \cref{avar:hp:unifboundedder} strengthens \cref{avar:hp:boundedder} to require that the ``Lipschitz constants'' of the differentiable Markov kernel $P$ are bounded in the starting point as per \cref{def:unifboundedderivative}.
	\cref{avar:hp:Zapproxnormality} is an asymptotic normality assumption concerning the approximation scheme that will often hold in the same situations as those in which \cref{avar:hp:Yapproxnormality} does---see \cref{avar:ex:imcmc}. 
	
	\begin{theorem} \label{avar:thm:ZCLT} 
		Let $\alpha\in(0,1/2)$. If \cref{avar:hp:unifergodicity,avar:hp:Vgrowth}, as well as \cref{avar:hp:unifapprox,avar:hp:unifboundedder}, hold, then for all $f\in \CbVfunXa$, 
		\begin{equation} \label{avar:eq:ZrandomcenterCLT}
			n^{-1/2}\sum_{i=1}^n f(Z_i)-\mu_{i-1}(f) \Rightarrow N(0,\sigma^2(f));
		\end{equation}
		and if \cref{avar:hp:Zapproxnormality} holds too, for $f\in\CbVfunXa\cap\mathcal{G}(\Xspace)$,
		\begin{equation} \label{avar:eq:ZdetcenterCLT}
			n^{-1/2}\sum_{i=1}^n f(Z_i)-\mu(f) \Rightarrow N(0,\sigma^2(f)+w^2(f)).
		\end{equation}
	\end{theorem}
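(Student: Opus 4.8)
The plan is to deduce both central limit theorems from the adaptive/inhomogeneous MCMC machinery of \cite{fort2011convergence,fort2014clt}, using the mean value inequalities of \cref{sec:mvi} to verify the (rate-quantified) diminishing adaptation condition those results require. Throughout we regard the sequence of invariant distributions $\{\mu_k\}_{k\in\mathbb{N}}$ as an adaptation parameter taking values in $\mathcal{P}_\lambda(\Xspace)$ and argue conditionally on the $\sigma$-algebra it generates, so that $\{Z_k\}_{k\in\mathbb{N}}$ is an inhomogeneous Markov chain with transitions $P_{\mu_{k-1}}$. First I would establish the \emph{containment} ingredient: \cref{avar:hp:drift,avar:hp:minorization} supply a geometric drift towards a small set and a minorization, both uniform in $n$, so (via the quantitative bounds underlying \cref{avar:lemma:geomerg}) each $P_{\mu_n}$ is $V$-geometrically ergodic with constants that may be chosen uniformly in $n$; together with the uniform moment bound \cref{avar:hp:Vgrowth} this yields the containment and tightness hypotheses of \cite{fort2014clt}. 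The passage from the $V$-norm to the $V^\alpha$-norm, $\alpha\in(0,1/2)$, and hence the restriction of test functions to $\CbVfunXa$, is the standard device trading a little ergodic decay for the extra integrability needed to control the martingale approximation and the CLT remainder terms; this is exactly the regime covered by \cite{fort2011convergence,fort2014clt} given $\sup_n\mu_n(V^j)<\infty$ with $j>1$.

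Next comes the step where the calculus does the work: verifying \emph{diminishing adaptation}. By \cref{avar:hp:unifboundedder} the kernel $P_\cdot(x,\cdot)$ has a $(V,\{\delta_x\}_{x\in\Xspace})$-uniformly bounded derivative at $\mu_n$ towards $\mu_{n-1}$, so \cref{def:unifboundedderivative} furnishes constants $M_1,M_2<\infty$, independent of $x$ (and, under the standing uniform-ergodicity setup, of $n$), with
\begin{equation*}
\sup_{x\in\Xspace}\Vnorm{P_{\mu_n}(x,\cdot)-P_{\mu_{n-1}}(x,\cdot)} \leq M_1\Vnorm{\mu_n-\mu_{n-1}} + M_2\sup_{x\in\Xspace}|\mu_n(x)-\mu_{n-1}(x)|.
\end{equation*}
Summing over $k$, dividing by $\sqrt n$, and invoking \cref{avar:hp:unifapprox} gives $n^{-1/2}\sum_{k=1}^n\sup_x\Vnorm{P_{\mu_k}(x,\cdot)-P_{\mu_{k-1}}(x,\cdot)}\to 0$ \Pas, which is precisely the diminishing adaptation hypothesis, at the rate required for a CLT, in \cite{fort2014clt}. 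Since moreover $\mu_n\to\mu$ pointwise and in $V$-norm (again by \cref{avar:hp:unifapprox}, cf.~\cref{app:lemma:ptwisetoV}) and $P_\cdot(x,\cdot)$ is differentiable at $\mu$, the limiting kernel $P_\mu$ is well defined and $P_{\mu_n}\to P_\mu$ in the relevant sense; the asymptotic variance functional depends continuously on the invariant distribution along the sequence (itself obtainable from the mean value inequalities together with the uniform geometric ergodicity), so the limiting variance in the random-centered CLT is $\sigma^2(f)$, the asymptotic variance of the ideal chain $\{X_k\}_{k\in\mathbb{N}}$ with kernel $P_\mu$. This proves \eqref{avar:eq:ZrandomcenterCLT}.

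For the deterministically centered statement \eqref{avar:eq:ZdetcenterCLT}, for $f\in\CbVfunXa\cap\mathcal{G}(\Xspace)$ I would use the decomposition
\begin{equation*}
n^{-1/2}\sum_{i=1}^n\bigl(f(Z_i)-\mu(f)\bigr) = n^{-1/2}\sum_{i=1}^n\bigl(f(Z_i)-\mu_{i-1}(f)\bigr) + n^{-1/2}\sum_{i=1}^n\bigl(\mu_{i-1}(f)-\mu(f)\bigr).
\end{equation*}
The first term converges to $N(0,\sigma^2(f))$ by \eqref{avar:eq:ZrandomcenterCLT}, and in fact does so stably given $\{\mu_k\}$, the conditional CLT holding with the \emph{same} deterministic limit law for almost every realization of the approximation sequence. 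The second term equals $n^{-1/2}\sum_{k=1}^n(\mu_k-\mu)(f)$ up to one boundary term of order $n^{-1/2}$, hence converges to the Gaussian law of \cref{avar:hp:Zapproxnormality}; writing its variance as $w^2(f)$ in the notation of the statement. Because the chain-fluctuation limit is deterministic and the approximation-driven term is measurable with respect to the $\sigma$-algebra generated by $\{\mu_k\}$, under which the chain term is conditionally centered, a standard stable-convergence argument (condition on $\{\mu_k\}$, take characteristic functions, then integrate) yields joint convergence of the pair to \emph{independent} Gaussians and therefore of the sum to $N(0,\sigma^2(f)+w^2(f))$. The main obstacle I anticipate is precisely this last assembly: one must check that conditioning on the approximation noise neither perturbs the asymptotic variance of the chain fluctuation nor destroys its centering, so that the two sources of randomness genuinely decouple in the limit; verifying diminishing adaptation, by contrast, the mean value inequalities handle cleanly.
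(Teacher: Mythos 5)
Your proposal follows essentially the same route as the paper: the same decomposition into a conditionally centered term and an approximation-fluctuation term, verification of the hypotheses of \cite{fort2014clt} with the diminishing-adaptation condition supplied by the mean value inequality from \cref{avar:hp:unifboundedder} and \cref{avar:hp:unifapprox}, and the final assembly by conditioning on the approximation sequence and multiplying characteristic functions. The one place you are thin is the convergence of the conditional variances ($n^{-1}\sum_i F_i(Z_i)\to\sigma^2(f)$, condition A4 of \cite{fort2014clt}), which in the paper requires the resolvent perturbation identity, the LLN for iMCMC and the quantitative resolvent-convergence lemma — but the idea you invoke (continuity of the variance functional via the mean value inequalities and uniform geometric ergodicity) is exactly how the paper carries this out.
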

	\begin{proof}
        See \cref{app:imcmcproof}.
    \end{proof}
	\cref{avar:thm:ZCLT} tells us that $\{Z_k\}_{k\in\mathbb{N}}$  also possess a limiting Normal law, with asymptotic variance equal to that of an ideal MCMC scheme targeting $\mu$ plus some additional variability due the approximation. This latter is often greater than the one we obtain with a Sequential MCMC scheme, as in the example below. 
	
	\begin{example}[Interacting MCMC for Feynman--Kac flow] \label{avar:ex:imcmc}
		\cite{andrieu2007nonlinearmcmc,atchade2010cautionarytale,brockwell2010simcmc,delmoraldoucet2010iMCMC,fort2011convergence} study the following type of algorithm.
		\begin{algorithm}[H] \label{alg:imcmc} 
			\caption{iMCMC for Feynman--Kac flow}
			For $n=1,...$
			\begin{enumerate}
				\item Simulate $Z^{(1)}_n\sim P_{\eta^{(1)}}(Z^{(1)}_{n-1},\cdot)$, set $\eta^{(1)}_n:= n^{-1}\sum_{i=1}^n \delta_{Z^{(1)}_i}$
				\item Simulate $Z^{(2)}_n\sim P_{\Phi(\eta^{(1)}_n)}(Z_{n-1}^{(2)},\cdot)$, set $\eta^{(2)}_n:= n^{-1}\sum_{i=1}^n \delta_{Z^{(2)}_i}$
				\item[] \centerline{\vdots}\vspace*{-0.5cm}
				\item[$p$.] Simulate $Z^{(p)}_n\sim P_{\Phi(\eta^{(p-1)}_n)}(Z_{n-1}^{(p)},\cdot)$, set $\eta^{(p)}_n:= n^{-1}\sum_{i=1}^n \delta_{Z^{(p)}_i}$
			\end{enumerate}
		\end{algorithm}
		At each level $p$, the random variable $Z^{(p)}_0$ is sampled according to some initial distribution possibly depending on $p$. Each $\{Z^{(p)}_k\}_{k\in\mathbb{N}},p>1$ is in the setting studied for the process $\{Z_k\}_{k\in\mathbb{N}}$. On the top of \cref{avar:hp:FKflow}, consider
		\begin{enumerate}[label=FK2]
			\item $y\mapsto M(x,y)$ is continuous on $\Xspace$ \Pas for all $x\in\Xspace$.
			\label{avar:hp:FKflow2}
		\end{enumerate}
		Under \cref{avar:hp:FKflow}, as argued in \cref{avar:ex:smcmc}, one has $\Phi(\eta^1_n)(x)\rightarrow \Phi(\eta^1)(x)$ for all $x\in\Xspace$ \Pas. If $\Xspace$ is compact and \cref{avar:hp:FKflow2} holds too, the convergence can be made uniform, verifying \cref{avar:hp:unifapprox}. To prove that \cref{avar:hp:Zapproxnormality} holds too, use \eqref{avar:eq:FKdec} to write 
		\begin{align*}
			&n^{-1/2}\sum_{k=1}^{n}[\Phi(\eta^{(1)}_k)-\Phi(\eta^{(1)})](f) \\
			&= \sum_{k=1}^{n}  \frac{n^{-1/2}}{k}  \sum_{j=1}^k [\bar{Q}^{(1)}(f-\Phi(\eta_k^{(1)})(f))(Z^{(1)}_j)-\eta^{(1)}(\bar{Q}^{(1)}(f-\Phi(\eta_k^{(1)})(f))] \\
			&= \int_0^1 \frac{n^{-1/2}}{s} \sum_{j=1}^{sk}[\bar{Q}^{(1)}(f-\Phi(\eta_k^{(1)})(f))(Z^{(1)}_j)-\eta^{(1)}(\bar{Q}^{(1)}(f-\Phi(\eta_k^{(1)})(f))]\dif s + o_{\mathbb{P}}(1)  \\
			&\Rightarrow \sigma_{\eta^{(1)}}(\bar{Q}^{(1)}(f-\Phi(\eta^{(1)})(f)) \int_0^1 \frac{B_s}{s} \dif s,
		\end{align*}
		where in the last line we used the fact that a (functional) CLT holds for $\{Z^{(1)}_n;n\geq 0\}$, and where $B$ is a standard Brownian Motion. The right hand side is a Gaussian random variable with variance $w(f):=2\sigma^2_{\eta^{(1)}}(\bar{Q}^{(1)}(f-\Phi(\eta^{(1)})(f))$. \cref{avar:thm:ZCLT} then applies to prove 
		\begin{theorem}
			Let $f\in\mathbf{C}_1(\Xspace)$. Assume that $\Xspace$ is compact, and that the Markov family $\{P_\star\}$ \cref{avar:hp:unifergodicity} holds and that $P$ has an uniformly bounded derivative at $\Phi(\eta^{(p)})$ towards every $\Phi(\eta^{(p)}_n)$ for all $p\leq 2$. Then, if the Feynman--Kac flow $(G,M)$ satisfies \cref{avar:hp:FKflow} and \cref{avar:hp:FKflow2} it holds
			\begin{equation}
				[\eta^{(2)}_n-\eta^{(2)}](f)\Rightarrow N(0,\sigma^2_{\eta^{(2)}}(f-\eta^{(2)}(f))+2\sigma^2_{\eta^{(1)}}(\bar{Q}^{(1)}(f-\eta^{(2)}(f))),
			\end{equation}
			where we defined the operator $\bar{Q}$ as in \cref{avar:thm:sMCMCFK}.
		\end{theorem}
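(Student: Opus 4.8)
The plan is to recognise the statement as the base ($p=2$) case of the inductive scheme behind \cref{avar:thm:sMCMCFK} and to obtain it as a direct application of \cref{avar:thm:ZCLT} to the second-level chain $\{Z^{(2)}_k\}_{k\in\mathbb{N}}$. Indeed, $\{Z^{(2)}_k\}_{k\in\mathbb{N}}$ is precisely an instance of the inhomogeneous chain $\{Z_k\}_{k\in\mathbb{N}}$ of \cref{sec:imcmc}, with sequence of invariant distributions $\mu_k:=\Phi(\eta^{(1)}_k)$ and limiting distribution $\mu:=\Phi(\eta^{(1)})$, which is also the fixed point $\eta^{(2)}$. Since $\sqrt{n}\,[\eta^{(2)}_n-\eta^{(2)}](f)=n^{-1/2}\sum_{i=1}^n\big(f(Z^{(2)}_i)-\eta^{(2)}(f)\big)$, it suffices to verify the hypotheses of \cref{avar:thm:ZCLT} --- \cref{avar:hp:unifergodicity,avar:hp:Vgrowth,avar:hp:unifapprox,avar:hp:unifboundedder,avar:hp:Zapproxnormality} --- and then to identify the limiting variance.

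First I would dispatch the easy assumptions. \cref{avar:hp:unifergodicity} is assumed. \cref{avar:hp:Vgrowth} is immediate because $\Xspace$ is compact, so $V$ is bounded and all required $V^j$-moments are finite; for the same reason $\CbVfunXa=\CbVfunX=\mathbf{C}_1(\Xspace)$, reconciling the test-function class of \cref{avar:thm:ZCLT} with the $f\in\mathbf{C}_1(\Xspace)$ of the statement. For \cref{avar:hp:unifapprox}, note that the first-level chain $\{Z^{(1)}_k\}_{k\in\mathbb{N}}$ is homogeneous with geometrically ergodic kernel $P_{\eta^{(1)}}$ (by \cref{avar:hp:unifergodicity} at $n=1$), hence obeys an LLN and an invariance principle for $\mathbf{C}_1$ test functions; its empirical measures satisfy $\tvnorm{\eta^{(1)}_k-\eta^{(1)}_{k-1}}=O(1/k)$, and under \cref{avar:hp:FKflow} (with $\Xspace$ compact) the Boltzmann--Gibbs map $\Phi$ is Lipschitz continuous from total variation to the supremum norm, so $\sup_x|\Phi(\eta^{(1)}_k)(x)-\Phi(\eta^{(1)}_{k-1})(x)|=O(1/k)$; summing against the weights $n^{-1/2}$ gives $O(n^{-1/2}\log n)\to 0$, and the $\Vnorm{\cdot}$ statement then follows since $V$ is bounded on $\Xspace$. (Compactness and \cref{avar:hp:FKflow2} also give, as observed just before the statement, the uniform convergence $\Phi(\eta^{(1)}_n)\to\Phi(\eta^{(1)})$ \Pas.)

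It then remains to verify \cref{avar:hp:unifboundedder} and \cref{avar:hp:Zapproxnormality}. For the former, the hypothesis that $P_\cdot(x,\cdot)$ is differentiable at $\Phi(\eta^{(1)})$ with a uniformly bounded derivative towards every $\Phi(\eta^{(1)}_n)$ must be turned into a $(V,\{\delta_x\}_{x\in\Xspace})$-uniformly bounded derivative at $\mu_n=\Phi(\eta^{(1)}_n)$ towards $\mu_{n-1}=\Phi(\eta^{(1)}_{n-1})$; this uses convexity of the Markov family (so the whole segment $[\mu_{n-1},\mu_n]$ stays in the index set) together with the fact that, for the kernels at hand, the Lipschitz constants of the mean value inequality \eqref{eq:boundedderimplicationx} are continuous functionals of the invariant density which stay bounded over the fixed compact region traced out by the $\Phi(\eta^{(1)}_n)$ (cf.\ \cref{prop:MHrealMVI,cor:Gibbsboundedder}); one cannot instead route this bound through $\mu$ by the triangle inequality, since $\Vnorm{\mu_n-\mu}$ is only $O(n^{-1/2})$ and thus not $n^{-1/2}$-summable. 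For \cref{avar:hp:Zapproxnormality} I would invoke verbatim the computation carried out immediately before the statement: via the decomposition \eqref{avar:eq:FKdec} and the functional CLT for $\{Z^{(1)}_n\}_{n\in\mathbb{N}}$ one obtains $n^{-1/2}\sum_{k=1}^n[\Phi(\eta^{(1)}_k)-\Phi(\eta^{(1)})](f)\Rightarrow\sigma_{\eta^{(1)}}\big(\bar{Q}^{(1)}(f-\Phi(\eta^{(1)})(f))\big)\int_0^1\tfrac{B_s}{s}\dif s$, and since $\mathrm{Var}\big(\int_0^1\tfrac{B_s}{s}\dif s\big)=2$ and $\Phi(\eta^{(1)})(f)=\eta^{(2)}(f)$ this limit is $N(0,w(f))$ with $w(f)=2\sigma^2_{\eta^{(1)}}\big(\bar{Q}^{(1)}(f-\eta^{(2)}(f))\big)$; under \cref{avar:hp:FKflow,avar:hp:FKflow2} on the compact $\Xspace$ the operator $\bar{Q}^{(1)}$ preserves $\mathbf{C}_1(\Xspace)$, so this holds for every $f\in\mathbf{C}_1(\Xspace)$, i.e.\ $\mathbf{C}_1(\Xspace)\subseteq\mathcal{G}(\Xspace)$.

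With all hypotheses in place, the deterministic-centering conclusion of \cref{avar:thm:ZCLT} gives $n^{-1/2}\sum_{i=1}^n\big(f(Z^{(2)}_i)-\eta^{(2)}(f)\big)\Rightarrow N\big(0,\sigma^2(f)+w(f)\big)$, where $\sigma^2(f)=\sigma^2_{\eta^{(2)}}(f-\eta^{(2)}(f))$ is the asymptotic variance of an ideal homogeneous chain targeting $\mu=\eta^{(2)}$ (finite for every bounded continuous $f$ by geometric ergodicity and boundedness of $V$), so that the right-hand side is exactly $N\big(0,\sigma^2_{\eta^{(2)}}(f-\eta^{(2)}(f))+2\sigma^2_{\eta^{(1)}}(\bar{Q}^{(1)}(f-\eta^{(2)}(f)))\big)$; since the left-hand side equals $\sqrt{n}\,[\eta^{(2)}_n-\eta^{(2)}](f)$, this is the asserted limit. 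I expect the main obstacle to be \cref{avar:hp:unifapprox}: the delicate point is not the $O(1/k)$ bound on a single increment of $\Phi(\eta^{(1)}_\cdot)$ but controlling the $n^{-1/2}$-weighted sum of these increments, which is why the $O(1/k)$ rate (hence the $O(n^{-1/2}\log n)$ total) is essential and why one must pass the Lipschitz estimate directly between consecutive $\mu_k$ rather than through $\mu$. A secondary nuisance is the bookkeeping that matches the stated bounded-derivative hypothesis to the ``at $\mu_n$ towards $\mu_{n-1}$'' form of \cref{avar:hp:unifboundedder}; everything else is a routine consequence of \cref{avar:thm:ZCLT} and the pre-statement computation.
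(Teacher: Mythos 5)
Your proof is correct and follows essentially the same route as the paper's: the theorem is obtained by verifying \cref{avar:hp:unifapprox,avar:hp:unifboundedder,avar:hp:Zapproxnormality} for the second-level chain and then invoking \cref{avar:thm:ZCLT}, with the variance contribution $w(f)=2\sigma^2_{\eta^{(1)}}(\bar{Q}^{(1)}(f-\eta^{(2)}(f)))$ coming from the pre-statement computation via \eqref{avar:eq:FKdec} and the functional CLT. If anything, your verification of \cref{avar:hp:unifapprox} via the $O(1/k)$ total-variation increment of the empirical measures and the Lipschitz continuity of $\Phi$ is more explicit than the paper's appeal to uniform convergence alone, which does not by itself yield the $n^{-1/2}$-weighted summability.
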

		
		The case $p>2$ can be dealt with using \cref{avar:thm:ZCLT}, but it also requires complex multilevel expansion formul\ae---we refer to \cite{bercu2012iMCMCfluctuations}.
	\end{example}
	

	\section{Discussion} \label{sec:discussion} 
	
	We have developed herein a basic theory for methods allowing an easy and natural comparison between Markov Chains of the same family with different invariant distributions, by deriving analogues of the Fundamental Theorem of Calculus and Mean-Value Inequality for MCMC kernels viewed as functions of their invariant distributions. These tools allow us to study when Markov chains with different invariant distributions will move alike. This was done by `kernelizing' some concepts of functional derivatives that are very popular in other areas of science but not so much in the MCMC literature. Ultimately, deriving kernel derivatives and the mean value inequalities does not involve particularly complicated operations and we therefore think that the tools developed herein or related ideas could be helpful. Here, we used the mean value inequalities essentially to verify the \textit{diminishing adaptation} condition of some adaptive-like MCMC algorithms, where essentially one can easily study the evolution of the invariant distributions but not of the respective kernels. However, similar ideas can be used in other context. For instance, if it is the proposal distribution that changes between iterations, we can easily devise a `calculus with proposal distributions' and obtain similar mean value inequalities that can be used to prove diminishing adaptation for such algorithms. Furthermore, these mean value inequalities could be used in conjunction with \textit{perturbation bounds} (see e.g. \cite{rudolf2018perturb,medina2020perturb,alquier2016perturb}) to derive bounds for iterated kernels, and then infer distances between the relevant invariants.
	
	We believe these tools and ideas can be helpful also outside the adaptive MCMC and perturbation framework: recently, \cite{ascolani2024gibbs} derived the inequality
	\begin{equation} \label{eq:ascolanizanellamvi}
		\tvnorm{P_\mu(\rho,\cdot)-P_\nu(\rho,\cdot)} \leq 2 K M \tvnorm{\mu-\nu}
	\end{equation}
	for the deterministic scan Gibbs kernel, where $K$ represents the number of stages, for $\rho$ having a density satisfying the warm-start condition. \eqref{eq:ascolanizanellamvi} is then used to study mixing times for a Bayesian posterior distribution using information on the mixing time towards a Gaussian, which roughly speaking can be thought of as its large data limit in light of the Bernstein--von Mises theorem. \eqref{eq:ascolanizanellamvi} can be derived via Corollary \ref{cor:Gibbsboundedder} in the case $K=2$ using Bayes' formula (and the argument could be extended to the general case at the expense of introducing some slightly cumbersome notation). Analogous inequalities for the Metropolis-within-Gibbs kernel is derived in \cite{ascolani2024mwg}, which we expect can be computed alternatively using the tools developed herein. 
	
	\section*{Acknowledgements}
	
	\noindent RC was funded by the UK Engineering and Physical Sciences Research Council (EPSRC) via studentship 2585619 as part of grant number EP/W523793/1; AMJ acknowledges financial support from the Engineering and Physical Sciences Research Council (EPSRC; grants EP/R034710/1 and EP/T004134/1) and by United Kingdom Research and Innovation (UKRI) via grant EP/Y014650/1, as part of the ERC Synergy project OCEAN. 
	\noindent
	For the purpose of open access, the authors have applied a Creative Commons Attribution (CC BY) licence to any Author Accepted Manuscript version arising from this submission. Data sharing is not applicable to this article as no new data were created or analyzed in this study. 
	
	\printbibliography
	
	\appendix
	\section{Appendix to Section \ref{sec:mcmccalc}}	
	
	\subsection{Technical conditions in \cref{prop:MHderivatives}.}  \label{app:technicalcondboundder}

	Let $\rho\in\cal{W}_H$. To justify the interchange of differentiation and integration performed in \cref{prop:MHderivatives} one can simply show that for all $f\in\BVfunX$,  the function $\varphi_t(u,w):=|(\dif /\dif t)f(w)g(r_{\mu_t}(u,w))q(u,w)\rho(u)|$ is uniformly bounded in $t\in[0,1]$ by an $L^1(\lambda(\dif u)\otimes\lambda(\dif w))$ function (e.g. \cite[Theorem 2.27]{folland1999analysis}). Since $|f|\leq V$ and $g'$ is bounded, say by $c$,
	\begin{align*}
		&\varphi_t(u,w)=
		f(w)g'(r_{\mu_t}(u,w))\bigg|\frac{\chi(w)q(w,u)\mu_t(u)q(u,w)-\chi(u)q(u,w)\mu_t(w)q(w,u)}{(\mu_t(u)q(u,w))^2}\bigg|q(u,w)\rho(u) \\
		&\leq cV(w)\bigg|\frac{\chi(w)\mu_t(u)-\chi(u)\mu_t(w)}{\mu_t(u)^2}\bigg|q(w,u)\rho(u) 
		= cV(w)\bigg|\frac{\mu(w)\nu(u)-\mu(u)\nu(w)}{\mu_t(u)^2}\bigg|q(w,u)\rho(u) \\
		&\leq cV(w)\bigg(\frac{\mu(w)\nu(u)}{\mu(u)^2} + \frac{\mu(w)\nu(u)}{\nu(u)^2} + \frac{\nu(w)\mu(u)}{\mu(u)^2} + \frac{\nu(w)\mu(u)}{\nu(u)^2}\bigg)q(w,u)\rho(u)
	\end{align*}
	since $\sup_x \rho(x)/\min(\mu(x),\nu(x))^2<\infty$, the claim follows by the assumed $V$-integrability of the indexed invariants. For $\rho\in\{\delta_x\}_{x\in\Xspace}$ it is similar.
	
	\subsection{Technical conditions in \cref{prop:Gibbsderivatives}.}  
	Much as in the proof above, it suffices to show that for all $f\in\mathbf{B}_{V}(\Xspace^2)$, $\varphi_t(u,w):=|(\dif /\dif t)f(w_1,w_2)\mu_{1|2,t}(u_2,w_1)\mu_{2|1,t}(w_1,w_2)\rho_2(u_2)|$ is uniformly bounded in $t\in[0,1]$ by an $L^1(\lambda(\dif u)\otimes\lambda^2(\dif w))$ function. Notice that $\varphi_t(u,w)=\varphi_{1,t}(u,w)+\varphi_{2,t}(u,w)$, with
	\begin{align*}
		&\varphi_{1,t}(u,w)=|f(w_1,w_2)|
		\rho_2(u_2)\bigg|\frac{\chi(w_1,u_2)\mu_{2,t}(u_2)-\chi_2(u_2)\mu_t(w_1,u_2)}{\mu_{2,t}(u_2)^2}\bigg|\mu_{2|1,t}(w_1,w_2) \\
		&\leq V(w_1,w_2)\rho_2(u_2)\bigg|\frac{\chi(w_1,u_2)-\chi_2(u_2)\mu_{1|2,t}(u_2,w_1))}{\min(\mu_2(u_2),\nu_2(u_2))}\bigg|\mu_{2|1,t}(w_1,w_2) \\
		&\leq V(w_1,w_2)b(|\chi(w_1,u_2)|+|\chi_2(u_2)|\mu_{1|2,t}(u_2,w_1)))\mu_{2|1,t}(w_1,w_2) \\
		&\leq V(w_1,w_2)b(|\chi(w_1,u_2)|+|\chi_2(u_2)|(P_{1|2,0}(u_2,w_1))+P_{1|2,1}(u_2,w_1)))(P_{2|1,0}(w_1,w_2)+P_{2|1,1}(w_1,w_2))
		\intertext{where $b:=\sup_x \rho_2(x)/\min(\mu_2(x),\nu_2(x))<\infty$ by assumption. To bound $\psi_{2,t}(u,w)$ we use Bayes' formula to write}
		&\psi_{2,t}(u,w)=f(w_1,w_2)
		\rho_2(u_2)\bigg|\frac{\chi(w_1,w_2)\mu_{1,t}(w_1)-\chi_1(w_1)\mu_t(w_1,w_2)}{\mu_{1,t}(w_1)^2}\bigg|\mu_{1|2,t}(u_2,w_1) \\
		&=f(w_1,w_2) \rho_2(u_2)\bigg|\frac{\chi(w_1,w_2)\mu_{1,t}(w_1)-\chi_1(w_1)\mu_t(w_1,w_2)}{\mu_{1,t}(w_1)\mu_{2,t}(u_2)}\bigg|\mu_{2|1,t}(w_1,u_2)
	\end{align*}
	and then we proceed as for $\psi_{1,t}(u,w)$.
	
	\section{Appendix to Section \ref{sec:applications}}
	
	Before the proofs of \cref{avar:thm:YCLT,avar:thm:ZCLT} can be given, we need a number of auxiliary Lemmas and results. We use an approach based on the Poisson equations and resolvents. For a Markov kernel $P_\upsilon$ having $\upsilon$ as invariant measure, the Poisson equation is
	\begin{equation}
		\left\{
		\begin{array}{rl}
			(P_\upsilon-\Id)R_\upsilon =& \upsilon - \Id, \\
			\upsilon R_\upsilon =& 0,
		\end{array}
		\right.
	\end{equation}
	with $R_\upsilon$ being the resolvent operator defined by $ R_\upsilon f(x) := \sum_{k=0}^\infty [P_\upsilon^k(x,f)-\upsilon(f)].$ The following lemma is an immediate consequence of \cite[Lemma 2.3]{fort2011convergence}
	
	\begin{lemma} \label{avar:lemma:geomerg}
		If \cref{avar:hp:unifergodicity} holds, for all $n\in\mathbb{N}$ the Markov chains $\{P_{\mu_n}\}_{n\in\mathbb{N}}$ are $V$-geometrically ergodic in that for all $x\in\Xspace$, for some finite $C_n$ and $\beta_n\in (0,1)$,
		\begin{equation*}
			\Vnorm{P^k_{\mu_n}(x,\cdot)-\mu_n} \leq  V(x)C_n \beta_n^k.
		\end{equation*}
		Furthermore, $L:=\sup_n L_n<\infty$, where $L_n:=C_n \vee (1-\beta_n)^{-1}$.
	\end{lemma}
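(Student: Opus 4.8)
The plan is to invoke the standard quantitative bound relating geometric drift and minorization to $V$-geometric ergodicity --- precisely the content of \cite[Lemma 2.3]{fort2011convergence}, which itself rests on the theory of \cite{meyntweedie2012} --- and then to verify that the constants it produces can be taken uniformly over $n$. First I would fix $n$ and observe that \cref{avar:hp:drift} supplies a geometric drift inequality $P_{\mu_n}V\le \lambda V + b1_C$ with a common pair $(\lambda,b)$ independent of $n$, and that \cref{avar:hp:minorization} supplies a $j$-step minorization $P^j_{\mu_n}(x,\cdot)\ge \kappa_n\upsilon_n$ on the same common level set $C=\{V\le d\}$, with $\inf_n\kappa_n>0$. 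Since $d\ge b/(2(1-\lambda))-1$, the hypotheses of the quantitative ergodic theorem (in the form used in \cite{fort2011convergence}, or equivalently \cite[Theorem 1.1]{roberts2004general}) are met for each $n$, yielding the bound $\Vnorm{P^k_{\mu_n}(x,\cdot)-\mu_n}\le V(x)C_n\beta_n^k$ for finite $C_n$ and $\beta_n\in(0,1)$.

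The second and only substantive step is uniformity: I would trace through the explicit expressions for $C_n$ and $\beta_n$ delivered by that theorem. In the standard formulation these depend on the problem data only through $\lambda$, $b$, $d$, the minorization constant $\kappa_n$, and the lag $j$. All of these except $\kappa_n$ are already uniform in $n$ by assumption, and $\kappa_n$ enters only through $\kappa:=\inf_n\kappa_n>0$; the constants are monotone in $\kappa_n$ in the right direction (a larger minorization constant gives a better --- smaller --- contraction rate and constant), so replacing $\kappa_n$ by $\kappa$ yields a valid bound with $n$-independent constants $\bar C$ and $\bar\beta\in(0,1)$. Setting $C_n\le\bar C$ and $\beta_n\le\bar\beta$ for all $n$ then gives $L_n=C_n\vee(1-\beta_n)^{-1}\le \bar C\vee(1-\bar\beta)^{-1}=:L<\infty$, which is the final claim.

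The main obstacle --- really a bookkeeping point rather than a genuine difficulty --- is confirming that the particular version of the drift-and-minorization ergodicity bound cited (\cite[Lemma 2.3]{fort2011convergence}) indeed exposes its constants as explicit functions of $(\lambda,b,d,\kappa_n,j)$ with the stated monotonicity, so that the passage to $\inf_n\kappa_n$ is legitimate; this is exactly why the lemma is phrased as ``an immediate consequence'' of that reference. One should also note in passing that the invariant measures $\mu_n$ exist and are the unique stationary distributions of $P_{\mu_n}$ --- guaranteed by $\mu_n$-irreducibility together with the drift condition --- so that the left-hand side $\Vnorm{P^k_{\mu_n}(x,\cdot)-\mu_n}$ is well defined; I would state this explicitly at the outset of the proof. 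No new computation beyond citing and specializing the reference is required.
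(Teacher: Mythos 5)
Your proposal is correct and follows exactly the route the paper takes: the paper presents this lemma as an immediate consequence of \cite[Lemma 2.3]{fort2011convergence}, relying on precisely the observation you make that the drift pair $(\lambda,b)$, the level set $C$, and the lag $j$ are $n$-independent by \cref{avar:hp:unifergodicity} while $\kappa_n$ enters only through $\inf_n\kappa_n>0$, so the resulting constants $C_n,\beta_n$ are uniformly controlled. Your additional care in spelling out the monotonicity of the constants in $\kappa_n$ and the well-definedness of $\mu_n$ is sound but does not change the argument.
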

	
	Using the lemma above and the definition of the Poisson resolvent we also immediately obtain that for $f\in\BVfunX$, $|R_{\mu_n}f(x)|\leq \sum_{k=1}^\infty |P^k_{\mu_n}(x,f)-\mu_n(f)|\leq \Vnormf{f}V(x)C_n\sum_{k=0}^\infty\beta^k_n$ and hence the following. 
	\begin{lemma} \label{app:lemma:resolventbound}
		If \cref{avar:hp:unifergodicity} holds then, for all $n\geq 1$ and $\alpha\in(0,1]$, $|R_{\mu_n} f(x)|\leq V^\alpha(x)L^2\Vnormfa{f}$. In particular, $\sup_n \Vnorma{R_{\mu_n}f}<\infty$.
	\end{lemma}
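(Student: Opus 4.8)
The plan is to chain together the two immediate estimates already assembled just before the statement. First I would record that, by definition of the resolvent and the triangle inequality, for any $f\in\BVfunXa$ and $x\in\Xspace$,
\begin{equation*}
	|R_{\mu_n}f(x)| \leq \sum_{k=0}^\infty |P^k_{\mu_n}(x,f)-\mu_n(f)| \leq \Vnormfa{f}\sum_{k=0}^\infty \Vnorma{P^k_{\mu_n}(x,\cdot)-\mu_n}.
\end{equation*}
Then I would invoke \cref{avar:lemma:geomerg}; strictly speaking that lemma is stated with the $V$-norm, so the first genuine step is to observe that the same geometric bound holds with $V^\alpha$ in place of $V$ for any $\alpha\in(0,1]$. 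This follows because $V^\alpha$ is itself a valid ``Lyapunov-type'' weight dominated by $V$ (we have $V^\alpha\leq V$ since $V\geq 1$), so $\Vnormfa{g}\geq \Vnormf{g}$ is false in general but the drift argument can be rerun with $\tilde V:=V^\alpha$ exactly as in the passage following \cref{avar:hp:Vgrowth} where $\tilde V:=V^{1/j}$ is used; alternatively, and more cheaply, one notes $\Vnorma{P^k_{\mu_n}(x,\cdot)-\mu_n}\leq$ (a constant times) $V^\alpha(x)C_n\beta_n^k$ directly from \cref{avar:lemma:geomerg} together with $V\geq 1$. Either way one gets $\Vnorma{P^k_{\mu_n}(x,\cdot)-\mu_n}\leq V^\alpha(x)C_n\beta_n^k$.

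With that in hand the remaining step is purely arithmetic: summing the geometric series gives
\begin{equation*}
	|R_{\mu_n}f(x)| \leq \Vnormfa{f}\, V^\alpha(x)\, C_n \sum_{k=0}^\infty \beta_n^k = \Vnormfa{f}\, V^\alpha(x)\, \frac{C_n}{1-\beta_n} \leq \Vnormfa{f}\, V^\alpha(x)\, L_n^2 \leq \Vnormfa{f}\, V^\alpha(x)\, L^2,
\end{equation*}
using $L_n = C_n\vee(1-\beta_n)^{-1}$ so that $C_n/(1-\beta_n)\leq L_n^2$, and then $L_n\leq L:=\sup_n L_n<\infty$ from \cref{avar:lemma:geomerg}. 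Dividing through by $V^\alpha(x)$ and taking the supremum over $x$ yields $\Vnorma{R_{\mu_n}f}\leq L^2\Vnormfa{f}$, and taking the supremum over $n$ gives $\sup_n\Vnorma{R_{\mu_n}f}\leq L^2\Vnormfa{f}<\infty$ for each fixed $f\in\BVfunXa$.

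The only point requiring any care—hence the ``main obstacle'', though it is a mild one—is the transfer of the geometric ergodicity bound from $V$ to $V^\alpha$: one must be sure the constants $C_n$ and rates $\beta_n$ (and hence their uniform bound $L$) are not degraded when passing to the smaller weight. Since $V^\alpha\leq V$, the bound $\Vnorma{P^k_{\mu_n}(x,\cdot)-\mu_n}\leq \Vnorm{P^k_{\mu_n}(x,\cdot)-\mu_n}$ need not hold directly (the $V^\alpha$-norm of a signed measure is the sup over a \emph{larger} class of test functions than the $V$-norm), so the clean route is to re-derive geometric ergodicity for $\tilde V:=V^\alpha$ via the Jensen-inequality drift argument already used in the main text, which preserves uniformity of the constants, and then apply the series summation verbatim.
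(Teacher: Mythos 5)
Your proof is correct and follows essentially the same route as the paper, which obtains the bound in one line by combining the definition of the resolvent with the geometric ergodicity of \cref{avar:lemma:geomerg} and summing the geometric series via $C_n/(1-\beta_n)\leq L_n^2\leq L^2$; your extra care about transferring the bound from $V$ to $V^\alpha$ (via the Jensen-inequality drift argument with $\tilde V:=V^\alpha$) addresses a point the paper glosses over and is the right way to handle it. One small slip: since $V\geq 1$ and $\alpha\leq 1$ give $V^\alpha\leq V$, the test-function class $\{f:\Vnormfa{f}\leq 1\}$ is \emph{smaller} than $\{f:\Vnormf{f}\leq 1\}$, so in fact $\Vnorma{\chi}\leq\Vnorm{\chi}$ does hold directly --- the reason this cheap domination is still insufficient is that it yields $V(x)$ rather than $V^\alpha(x)$ on the right-hand side, which is exactly why the drift argument must be rerun for $V^\alpha$ as you propose.
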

	The role of $\alpha$ will become clear later.
	\begin{lemma} \label{app:lemma:ptwisetoV}
		If Assumptions \ref{avar:hp:unifergodicity}, \ref{avar:hp:Vgrowth} and \ref{avar:hp:ptwiseapprox} hold, then $\Vnorm{\mu_n-\mu}\rightarrow 0$. 
	\end{lemma}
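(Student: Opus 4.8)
The plan is to turn the claim into a convergence-of-integrals statement and close it with a generalized dominated convergence argument, the only genuine work being to rule out escape of $V$-mass to infinity. Since $\mu_n$ and $\mu$ admit densities with respect to $\lambda$, the signed measure $\mu_n-\mu$ has density $\mu_n(\cdot)-\mu(\cdot)$, so that
\[
\Vnorm{\mu_n-\mu}=\int V(x)\,\lvert\mu_n(x)-\mu(x)\rvert\,\lambda(\dif x).
\]
By \cref{avar:hp:ptwiseapprox} (together with Fubini's theorem and the $\sigma$-finiteness of $\lambda$) one may restrict to a \Pas event on which $\mu_n(x)\to\mu(x)$ for $\lambda$-a.e.\ $x$; on this event $V\mu_n\to V\mu$ pointwise $\lambda$-a.e. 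I would then apply Scheffé's lemma to the non-negative $\lambda$-densities $g_n:=V\mu_n$ and $g:=V\mu$: it delivers $\int\lvert g_n-g\rvert\,\dif\lambda\to 0$, i.e.\ $\Vnorm{\mu_n-\mu}\to 0$, as soon as $\mu(V)<\infty$ and $\mu_n(V)\to\mu(V)$.

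Finiteness of $\mu(V)$ is immediate from Fatou's lemma together with \cref{avar:hp:Vgrowth}: using Jensen's inequality for the probability measures $\mu_n$ and the convex map $t\mapsto t^{j}$,
\[
\mu(V)\le\liminf_{n}\mu_n(V)\le\sup_{n}\mu_n(V)\le\Big(\sup_{n}\mu_n(V^{j})\Big)^{1/j}<\infty .
\]
The same Fatou bound gives $\mu(V)\le\liminf_{n}\mu_n(V)$, so only the reverse inequality $\limsup_{n}\mu_n(V)\le\mu(V)$ remains.

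For this I would fix $K\ge 1$ and split $\mu_n(V)=\int_{\{V\le K\}}V\,\dif\mu_n+\int_{\{V>K\}}V\,\dif\mu_n$. On $\{V>K\}$ one has $V\le K^{-(j-1)}V^{j}$, so the second term is at most $K^{-(j-1)}\sup_{m}\mu_m(V^{j})$, uniformly in $n$. For the first term, $V\mathbf{1}_{\{V\le K\}}(\mu_n-\mu)\to 0$ $\lambda$-a.e.\ and is dominated by $K(\mu_n+\mu)$, whose $\lambda$-integral equals $2K$ for every $n$ (hence trivially converges); the generalized dominated convergence theorem (Pratt's lemma) then gives $\int_{\{V\le K\}}V\,\dif\mu_n\to\int_{\{V\le K\}}V\,\dif\mu\le\mu(V)$. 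Combining the two pieces yields $\limsup_{n}\mu_n(V)\le\mu(V)+K^{-(j-1)}\sup_{m}\mu_m(V^{j})$, and letting $K\to\infty$ (which uses $j>1$) gives $\limsup_{n}\mu_n(V)\le\mu(V)$, completing the argument.

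The main obstacle—and the only place the hypotheses do more than bookkeeping—is exactly this bound $\limsup_{n}\mu_n(V)\le\mu(V)$: pointwise convergence of the densities alone does not prevent a vanishing fraction of $V$-mass from drifting off to infinity, and it is the uniform higher-moment control $\sup_n\mu_n(V^{j})<\infty$ with $j>1$ strictly, supplied by \cref{avar:hp:Vgrowth}, that closes the gap. \cref{avar:hp:unifergodicity} enters only through the fact, noted in the text preceding the lemma, that it is what guarantees \cref{avar:hp:Vgrowth} in the settings of interest.
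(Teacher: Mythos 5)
Your proposal is correct and follows essentially the same route as the paper: reduce via Scheff\'e's lemma to showing $\mu_n(V)\rightarrow\mu(V)$, and obtain that from the uniform $V^j$-moment bound in \cref{avar:hp:Vgrowth}. The only difference is that you verify the uniform-integrability step explicitly by truncation and Pratt's lemma, whereas the paper simply cites a standard uniform-integrability result.
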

	\begin{proof}
		Since $\Vnorm{\mu_n-\mu}=\int V(x)|\mu_n(x)-\mu(x)|\lambda(\dif x)$, by Scheff\'e's lemma, and the pointwise convergence given by \cref{avar:hp:ptwiseapprox}, it suffices to prove that $\mu_n(V)\rightarrow \mu(V)$. The convergence of $V$-moments is then verified by uniform integrability (e.g. \cite[Theorem 3.5]{billingsley1999cpm}) via \cref{avar:hp:Vgrowth}.
	\end{proof}

	The following Theorem gives the Law of Large Numbers we will need to prove the Central Limit Theorems for sMCMC. The proof is similar to \cite[Theorem 2.7]{fort2011convergence} but in a sequential rather an interacting setting. This result extends the LLN of \cite{finke2020smcmc} under much less stringent conditions, although it does not supply non-asymptotic estimates. 
	\begin{theorem} \label{app:thm:Yllns}
		Let $F:\Xspace\times\Pspace\mapsto \mathbb{R}^d$ be a measurable function satisfying $\sup_n \Vnormf{F(\cdot,\mu_n)}<\infty$ and assume $\lim_n \int F(x,\mu_n)\mu_n(\dif x)$ exists and that \cref{avar:hp:unifergodicity,avar:hp:Vgrowth} hold. Then, in probability,
		\begin{equation} \label{app:eq:Ylln}
			\lim_n n^{-1} \sum_{i=1}^n F(Y_{n,i},\mu_n) = \lim_n \int F(x,\mu_n)\mu_n(\dif x).
		\end{equation} 
		If \cref{avar:hp:ptwiseapprox} also holds and $F(\cdot,\mu):=\lim_n F(\cdot,\mu_n)$ exists, the latter limit is equal to $\int F(x,\mu)\mu(\dif x).$    
	\end{theorem}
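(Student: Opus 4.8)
The plan is to adapt the Poisson-equation and martingale argument used for the interacting case in \cite[Theorem 2.7]{fort2011convergence}, exploiting the fact that here, for each fixed $n$, the array $\{Y_{n,k}\}_{k\le n}$ is a \emph{homogeneous} Markov chain with kernel $P_{\mu_n}$, so that the resolvent $R_{\mu_n}$ is available directly. Working componentwise I may assume $d=1$. First I would center: put $G_n:=F(\cdot,\mu_n)-\mu_n(F(\cdot,\mu_n))$, so that $\mu_n(G_n)=0$, and note that $c_F:=\sup_n\Vnormf{G_n}<\infty$ because $\sup_n\Vnormf{F(\cdot,\mu_n)}<\infty$ and $\sup_n\mu_n(V)<\infty$ (the latter from \cref{avar:hp:Vgrowth}). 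Since $\mu_n(F(\cdot,\mu_n))$ converges by hypothesis, it suffices to show $n^{-1}\sum_{i=1}^n G_n(Y_{n,i})\to0$ in probability. All estimates below are read conditionally on the $\sigma$-algebra generated by the approximation sequence, the crucial point being that the constants entering them --- $c_F$, the drift constants $\lambda,b$, and $L:=\sup_n L_n$ from \cref{avar:lemma:geomerg} --- are uniform.

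Next, set $\hat{G}_n:=R_{\mu_n}G_n$. By \cref{app:lemma:resolventbound} with $\alpha=1$, $|\hat{G}_n(x)|\le L^2 c_F\,V(x)$ uniformly in $n$, and $\hat{G}_n$ solves $(\Id-P_{\mu_n})\hat{G}_n=G_n$. Writing $G_n(Y_{n,i})=\hat{G}_n(Y_{n,i})-P_{\mu_n}\hat{G}_n(Y_{n,i})$, adding and subtracting $\hat{G}_n(Y_{n,i+1})$ and telescoping yields
\begin{equation*}
\sum_{i=1}^n G_n(Y_{n,i}) = \hat{G}_n(Y_{n,1})-\hat{G}_n(Y_{n,n+1}) + \sum_{i=1}^n M_{n,i}, \qquad M_{n,i}:=\hat{G}_n(Y_{n,i+1})-P_{\mu_n}\hat{G}_n(Y_{n,i}),
\end{equation*}
where, conditionally on the approximation, the $M_{n,i}$ are martingale increments for the natural filtration of $\{Y_{n,k}\}_k$ by the Markov property. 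The boundary term vanishes after dividing by $n$ since $n^{-1}\mathbb{E}\bigl(|\hat{G}_n(Y_{n,1})|+|\hat{G}_n(Y_{n,n+1})|\bigr)\le 2n^{-1}L^2 c_F\sup_{n,k}\mathbb{E}\,V(Y_{n,k})\to0$ by \cref{avar:hp:Vgrowth}.

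For the martingale term I would use an $L^2$ estimate: orthogonality of the conditional increments gives $\mathbb{E}\bigl[(\sum_{i=1}^n M_{n,i})^2\bigr]=\sum_{i=1}^n\mathbb{E}[M_{n,i}^2]$, and since the drift \cref{avar:hp:drift} ($P_{\mu_n}V\le\lambda V+b$, $V\ge1$) gives $|M_{n,i}|\le|\hat{G}_n(Y_{n,i+1})|+P_{\mu_n}|\hat{G}_n|(Y_{n,i})\le L^2 c_F\bigl(V(Y_{n,i+1})+(\lambda+b)V(Y_{n,i})\bigr)$, one has $\mathbb{E}[M_{n,i}^2]\lesssim\sup_{n,k}\mathbb{E}\,V^2(Y_{n,k})$, finite because the integer $j$ in \cref{avar:hp:Vgrowth} is at least $2$. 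Hence $\mathbb{E}\bigl[(n^{-1}\sum_{i=1}^n M_{n,i})^2\bigr]=O(1/n)$, so this term tends to $0$ in $L^2$ conditionally and, by Markov's inequality and dominated convergence, to $0$ in probability unconditionally. Combining the two pieces gives $n^{-1}\sum_{i=1}^n F(Y_{n,i},\mu_n)=\mu_n(F(\cdot,\mu_n))+o_\mathbb{P}(1)$, which is \eqref{app:eq:Ylln}. For the last assertion, under \cref{avar:hp:ptwiseapprox} we have $\Vnorm{\mu_n-\mu}\to0$ by \cref{app:lemma:ptwisetoV}; writing $\mu_n(F(\cdot,\mu_n))-\mu(F(\cdot,\mu))=\mu_n\bigl(F(\cdot,\mu_n)-F(\cdot,\mu)\bigr)+(\mu_n-\mu)(F(\cdot,\mu))$, the first term tends to $0$ by dominated convergence (pointwise convergence $F(\cdot,\mu_n)\to F(\cdot,\mu)$, domination by a multiple of $V$, uniform integrability of the $\mu_n$ from \cref{avar:hp:Vgrowth}) and the second is at most $\Vnorm{\mu_n-\mu}\,\Vnormf{F(\cdot,\mu)}\to0$.

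The main obstacle I expect is not a single estimate but the bookkeeping imposed by the randomness of the invariant distributions $\mu_n$ (which arise from a previously simulated chain): every bound has to be understood conditionally on the approximation, with constants uniform in $n$ and almost surely in $\omega$ --- precisely what the uniform drift/minorization of \cref{avar:hp:unifergodicity} and the uniform $V^j$-moments of \cref{avar:hp:Vgrowth} are designed to provide --- after which one passes from conditional $L^1/L^2$ control to unconditional convergence in probability. A secondary technical point is supplying the measurability and uniform-integrability needed to make the final dominated-convergence step and the identification of $\lim_n\mu_n(F(\cdot,\mu_n))$ rigorous.
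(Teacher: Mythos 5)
Your argument is correct and follows the same skeleton as the paper's proof: center $F(\cdot,\mu_n)$, solve the Poisson equation with the resolvent $R_{\mu_n}$ (whose uniform $V$-norm bound is exactly \cref{app:lemma:resolventbound}), split the sum into a martingale array plus a boundary term, kill the boundary term with the uniform moments of \cref{avar:hp:Vgrowth}, and identify the limit via $\Vnorm{\mu_n-\mu}\to 0$ (\cref{app:lemma:ptwisetoV}) plus dominated convergence. The one genuine difference is how the martingale term is dispatched: the paper invokes the weak law for martingale difference arrays of Douc et al.\ and verifies the conditional negligibility condition $\sum_i \mathbb{E}[|U_{n,i}|1_{|U_{n,i}|\geq\epsilon}\mid\mathcal{F}_{n,i-1}]\to_{\mathbb{P}}0$ using a $\gamma$-th moment for an arbitrary $\gamma>1$ (via the $C_p$ inequality), whereas you use orthogonality of increments and Chebyshev, which requires second moments of $V$ along the array. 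Your route is more elementary and self-contained, and it is legitimate here because \cref{avar:hp:Vgrowth} posits an \emph{integer} $j>1$, hence $j\geq 2$ and $V^2\leq V^j$; the paper's route would survive a weakening of that assumption to any real moment exponent $j>1$, which is presumably why it is phrased that way. Two small points to tidy up: the index $Y_{n,n+1}$ in your telescoping falls outside the array $\{Y_{n,k}\}_{k\leq n}$ (shift the decomposition by one step, as the paper does, so the boundary term involves $Y_{n,0}$ and $Y_{n,n}$); and in the final identification step, $\mu_n\bigl(F(\cdot,\mu_n)-F(\cdot,\mu)\bigr)$ cannot be handled by dominated convergence alone since the measure varies with $n$ --- you should route it through the fixed measure $\mu$, e.g.\ $\mu_n(|F_n-F|)\leq \mu(|F_n-F|)+\Vnorm{\mu_n-\mu}\,\Vnormf{F_n-F}$, which is essentially the triangle-inequality decomposition the paper uses.
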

	\begin{proof}
		Write $F_n:=F(\cdot,\mu_n), R_n:=R_{\mu_n}, P_n:=P_{\mu_n}$. We first use the Poisson equation to obtain the following decomposition
		\begin{equation*}
			n^{-1} \sum_{i=1}^n F_n(Y_{n,i}) - \mu_n(F_n) = M_n + r_n
		\end{equation*}
		where $M_n:= n^{-1} \sum_{i=1}^n R_n F_n (Y_{n,i}) - P_n R_n F_n (Y_{n,i-1}); r_n:=n^{-1}[P_nR_nF_n(Y_{n,0})-P_nR_nF_n(Y_{n,n})]$. The result follows if we show that both $M_n,r_n$ converge to 0 in probability.  Let $U_{n,i}:=n^{-1}[R_n F_n (Y_{n,i}) - P_n R_n F_n (Y_{n,i-1})]$. $\{U_{n,i};i\leq n\}$ is a triangular array of martingale difference sequences with respect to the triangular array of $\sigma$-fields $\{\mathcal{F}_{n,i}\}$, where $\mathcal{F}_{n,i}:=\bigvee_{m \leq n} \sigma(Y_{m,j}; j \leq m) \vee \sigma(Y_{n,j}; j \leq i)$. $M_n\rightarrow_{\mathbb{P}} 0$ then follows by \cite[Lemma A.1] {douc2007limit} if we establish the asymptotic negligibility of individual increments, i.e that for all $\epsilon>0$,
		\begin{equation*}
			\sum_{i=1}^n \mathbb{E}[|U_{n,i}|1_{|U_{n,i}|\geq \epsilon}|\mathcal{F}_{n,i-1}] \rightarrow_{\mathbb{P}} 0.
		\end{equation*}
		For any $\gamma>1$ and  $\epsilon>0$ we compute
		\begin{align*}
			\mathbb{E}[|U_{n,i}|1_{|U_{n,i}|\geq \epsilon}|\mathcal{F}_{n,i-1}] &\leq \epsilon^{-\gamma+1}\mathbb{E}[|U_{n,i}|^\gamma|\mathcal{F}_{n,i-1}]\\ &\leq \epsilon(n\epsilon)^{-\gamma}\mathbb{E}[|R_n F_n (Y_{n,i}) - P_n R_n F_n (Y_{n,i-1})|^\gamma|\mathcal{F}_{n,i-1}] \\
			&\leq  \epsilon(n\epsilon)^{-\gamma} 2^{\gamma-1}\mathbb{E}[|R_n F_n (Y_{n,i})|^\gamma + |P_n R_n F_n (Y_{n,i-1})|^\gamma|\mathcal{F}_{n,i-1}] \\
			&\leq  \epsilon(n\epsilon)^{-\gamma} 2^{\gamma} L^{2\gamma} \Vnormf{F_n(\cdot)}^\gamma P_n V^\gamma(Y_{n,i})\\ &\leq \epsilon(n\epsilon)^{-\gamma} 2^{\gamma}L^{2\gamma} \sup_n \Vnormf{F_n(\cdot)}^\gamma P_n V^\gamma(Y_{n,i})
		\end{align*}
		where in the third line we used the $C_p$ inequality, and in the fourth \cref{app:lemma:resolventbound}. \cref{avar:hp:Vgrowth} guarantees that, for some $\gamma > 1$, $P_n V^\gamma(Y_{n,i})$ is uniformly bounded in both $i,n$ and $M_n\rightarrow_{\mathbb{P}} 0$ follows. To prove $r_n\rightarrow_{\mathbb{P}} 0$ we use again \cref{app:lemma:resolventbound} to estimate
		\begin{align*}
			|r_n|\leq n^{-1}[|P_nR_nF_n(Y_{n,0})|+|P_nR_nF_n(Y_{n,n})|] \leq n^{-1}\sup_n \Vnormf{F_n(\cdot)} L^2(P_nV(Y_{n,0})+P_nV(Y_{n,n})).
		\end{align*}
		By \cref{avar:lemma:geomerg}, $P_n V(Y_{n,n}), P_nV(Y_{n,0})$ are both bounded in $n$, and \eqref{app:eq:Ylln} is proved. Now let $F:=F(\cdot,\mu)$, by the triangle inequality,
		\begin{equation*}
			|\mu_n(F_n)-\mu(F)|\leq |\mu_n(F_n)-\mu(F_n)|+|\mu(F_n)-\mu(F)|\leq \Vnorm{\mu_n-\mu}\Vnormf{F_n} +|\mu(F_n-F)|.
		\end{equation*}
		Since $\sup_n \Vnormf{F_n}<\infty$, under \cref{avar:hp:ptwiseapprox} the first term vanishes. Furthermore, we also have $|F_n-F|\leq 2V$, and the fact $\mu(V)<\infty$ guarantees that the second term also vanishes by the Dominated Convergence Theorem. It follows that   
		$\lim_n \int F(x,\mu_n)\mu_n(\dif x) = \int F(x,\mu)\mu(\dif x)$.   
	\end{proof}
	
	The next result is instead a Law of Large Numbers for iMCMC. This follows straight from \cite[Theorem 2.7]{fort2011convergence} together with a mean value inequality.
	
	\begin{theorem} \label{app:thm:Zllns}
		Assume that Assumptions \ref{avar:hp:unifergodicity}, \ref{avar:hp:Vgrowth}, \ref{avar:hp:unifapprox} and \ref{avar:hp:unifboundedder} hold. Let $F:\Xspace\times\Pspace\mapsto \mathbb{R}^d$ be a measurable function satisfying  $\sup_n \Vnorm{F(\cdot,\mu_n)}<\infty$, $\sum_{i=1}^\infty i^{-1}\sup_x \Vnormf{F(\cdot,\mu_i)-F(\cdot,\mu_{i-1})}<\infty$. Then, if $\lim_n \int F(x,\mu_n)\mu_n(\dif x)$ exists, in probability,
		\begin{equation} \label{app:eq:Zlln}
			\lim_n n^{-1} \sum_{i=1}^n F(Z_i,\mu_i) = \lim_n \int F(x,\mu_n)\mu_n(\dif x). 
		\end{equation} 
		If $F(\cdot,\mu):=\lim_n F(\cdot,\mu_n)$ exists, the latter limit is equal to $\int F(x,\mu)\mu(\dif x).$    
	\end{theorem}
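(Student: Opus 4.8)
The plan is to derive this directly from \cite[Theorem 2.7]{fort2011convergence}, reading the sequence of invariant distributions $\{\mu_n\}_{n\in\mathbb{N}}$ as an adaptation parameter (taking values in the infinite-dimensional space $\Pspace$) and $\{Z_k\}_{k\in\mathbb{N}}$ as the associated adaptive/interacting chain. That result requires three ingredients: (i) a uniform-in-the-parameter drift and minorization condition; (ii) a moment/containment condition bounding the $V$-moments of the chain; and (iii) a diminishing adaptation condition controlling $\sup_{x}\Vnorm{P_{\mu_k}(x,\cdot)-P_{\mu_{k-1}}(x,\cdot)}$. Ingredient (i) is precisely \cref{avar:hp:unifergodicity}, which via \cref{avar:lemma:geomerg} additionally yields uniform $V$-geometric ergodicity with $L=\sup_nL_n<\infty$; ingredient (ii) is \cref{avar:hp:Vgrowth}.

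Ingredient (iii) is where the MCMC calculus enters, via a mean value inequality. By \cref{avar:hp:unifboundedder} and \cref{def:unifboundedderivative} applied with $\mathcal{Q}=\{\delta_x\}_{x\in\Xspace}$, there exist constants $M_1,M_2<\infty$, independent of $n$ and of $x$, such that
\begin{equation*}
  \sup_{x\in\Xspace}\Vnorm{P_{\mu_n}(x,\cdot)-P_{\mu_{n-1}}(x,\cdot)}\ \leq\ M_1\Vnorm{\mu_n-\mu_{n-1}}+M_2\sup_{x\in\Xspace}|\mu_n(x)-\mu_{n-1}(x)|.
\end{equation*}
\cref{avar:hp:unifapprox} then forces the $n^{-1/2}$-normalised partial sums of both terms on the right-hand side to vanish $\mathbb{P}$-a.s., which supplies the diminishing adaptation input demanded by \cite[Theorem 2.7]{fort2011convergence} (a fortiori its $n^{-1}$-averaged weakening, which is all the LLN needs).

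The one point that is not automatic is that the summand $F(Z_i,\mu_i)$ carries a test function depending on the running parameter $\mu_i$. The hypotheses $\sup_n\Vnormf{F(\cdot,\mu_n)}<\infty$ and $\sum_i i^{-1}\sup_x\Vnormf{F(\cdot,\mu_i)-F(\cdot,\mu_{i-1})}<\infty$ are designed precisely to handle this: I would either invoke \cite[Theorem 2.7]{fort2011convergence} in the form that already accommodates parameter-dependent test functions, or reduce to a fixed test function by telescoping $F(\cdot,\mu_i)=F(\cdot,\mu_1)+\sum_{k=2}^i\bigl(F(\cdot,\mu_k)-F(\cdot,\mu_{k-1})\bigr)$ and controlling the resulting error through the same Poisson-equation/resolvent martingale decomposition used in the proof of \cref{app:thm:Yllns} (now with an additional ``parameter-drift'' remainder bounded using the displayed mean value inequality and the summability of the $F$-differences). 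This gives \eqref{app:eq:Zlln}.

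Finally, to identify the limit with $\int F(x,\mu)\mu(\dif x)$ when $F(\cdot,\mu):=\lim_nF(\cdot,\mu_n)$ exists, I would reuse the closing argument of the proof of \cref{app:thm:Yllns}: \cref{avar:hp:unifapprox} implies $\mu_n(x)\to\mu(x)$ pointwise, hence $\Vnorm{\mu_n-\mu}\to0$ by \cref{app:lemma:ptwisetoV}, and then
\begin{equation*}
  |\mu_n(F(\cdot,\mu_n))-\mu(F(\cdot,\mu))|\ \leq\ \Vnorm{\mu_n-\mu}\,\sup_n\Vnormf{F(\cdot,\mu_n)}+\mu\bigl(|F(\cdot,\mu_n)-F(\cdot,\mu)|\bigr)\ \longrightarrow\ 0,
\end{equation*}
the first term vanishing because $\sup_n\Vnormf{F(\cdot,\mu_n)}<\infty$ and the second by dominated convergence (using $|F(\cdot,\mu_n)-F(\cdot,\mu)|\leq2V$ and $\mu(V)<\infty$, which follows from \cref{avar:hp:Vgrowth}). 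I expect the main obstacle to be exactly the bookkeeping around the parameter-dependent test function---matching the summability condition on the $F$-differences and the $n^{-1/2}$-rate of \cref{avar:hp:unifapprox} to the precise form of diminishing adaptation required in \cite{fort2011convergence}, and confirming that the mean value inequality's constants $M_1,M_2$ are genuinely uniform in $n$ along the (random) approximation sequence; the ergodic machinery itself plugs in verbatim.
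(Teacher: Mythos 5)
Your proposal follows essentially the same route as the paper: both invoke \cite[Theorem 2.7]{fort2011convergence} with the invariant distributions playing the role of the adaptation parameter, verify the ergodicity and moment conditions via \cref{avar:hp:unifergodicity}, \cref{avar:hp:Vgrowth} and \cref{avar:lemma:geomerg}, check the diminishing-adaptation condition by combining the mean value inequality from \cref{avar:hp:unifboundedder} with \cref{avar:hp:unifapprox}, and identify the limit by the same argument as in the proof of \cref{app:thm:Yllns}. The hypotheses on $F$ are indeed there precisely to match the parameter-dependent-test-function form of the cited theorem, so no extra telescoping is needed.
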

	
	\begin{proof}
		\eqref{app:eq:Zlln} follows from \cite[Theorem 2.7]{fort2011convergence}. By \cref{avar:lemma:geomerg}, \cref{avar:hp:Vgrowth} and the implied fact that $\sup_n V(Z_n)$ is \Pas-finite, their assumptions their assumptions A3, A5 are all verified. To verify their A4 we apply a mean value inequality and the assumed uniformly bounded derivative in the invariant distribution \cref{avar:hp:unifboundedder} to write, for some $M_1,M_2<\infty$,
		\begin{align*}
			\sum_{i=1}^\infty i^{-1}(L_{\mu_i}\wedge L_{\mu_{i-1}})^6&\sup_x \Vnorm{P_{\mu_i}(x,\cdot)-P_{i-1}(x,\cdot)} V(Z_i)  \\
			&\leq  \sum_{i=1}^\infty i^{-1}(L_{\mu_i}\wedge L_{\mu_{i-1}})^6[M_1\Vnorm{\mu_i-\mu_{i-1}}+M_2\sup_x|\mu_i(x)-\mu_{i-1}(x)|]V(Z_i)  
		\end{align*}
		and the last expression is finite by $\sup_n L_{\mu_n}<\infty$, \cref{avar:hp:unifapprox} and the fact that $\sup_n V(Z_n)$ is \Pas-finite by \cref{avar:lemma:geomerg}, verifying the conditions in  \cite[Theorem 2.7]{fort2011convergence} and proving the claim. The fact that under \cref{avar:hp:ptwiseapprox} or \cref{avar:hp:unifapprox}  $\lim_n \int F(x,\mu_n)\mu_n(\dif x) = \int F(x,\mu)\mu(\dif x)$ follows from the same arguments used in \cref{app:thm:Yllns}.     
	\end{proof}
	In the proof below, for $\nu,\mu\in\Pspace$, we employ the operator identity
	\begin{equation} \label{avar:eq:PoiResolventkernelID}
		R_\nu-R_\mu = R_\mu(P_\nu-P_\mu)R_\nu + (\mu - \nu)R_\nu.
	\end{equation}
	The equation above is proved in \cite[proof of Proposition 3.1]{bercu2012iMCMCfluctuations}---we repeat the argument here for completeness. In operator sense, $P_\mu R_\mu = R_\mu P_\mu$ 
	Therefore
	it holds that $R_\mu(P_\mu-\Id)=(P_\mu-\Id)R_\mu=\mu-\Id$ by the Poisson equation, but then
	\begin{equation*}
		R_\mu(P_\mu-\Id)R_\nu = (\mu-\Id)R_\nu
	\end{equation*}
	and from the other side, again from an application of the Poisson identities, and since $R_\mu \nu(f)=0$,
	\begin{equation*}
		R_\mu(P_\nu-\Id)R_\nu = R_\mu(\nu-\Id) = -R_\mu
	\end{equation*}
	It follows that
	$R_\mu(P_\mu-(+\Id-\Id)-P_\nu)R_\nu=R_\mu(P_\mu-\Id)R_\nu+R_\mu(\Id-P_\nu)R_\nu=(\mu-\Id)R_\nu+R_\mu = (\mu-\nu)R_\nu - (R_\nu-R_\mu)$, where in the last line we used $\nu R_\nu = 0$. Hence, \eqref{avar:eq:PoiResolventkernelID} follows.
	
	For brevity, we will denote $P_n=P_{\mu_n}, R_{\mu_n}=R, P=P_\mu, R = R_\mu$ in what follows.
	
	\begin{lemma} \label{app:lemma:resolventconv}
		Assume Assumptions \ref{avar:hp:unifergodicity}, \ref{avar:hp:Vgrowth}, \ref{avar:hp:ptwiseapprox} and \ref{avar:hp:boundedder} hold. Then, for all $f\in\CbVfunX$, $R_nf(x)\rightarrow R_{\mu}f(x)$ for all $x\in\Xspace$ in probability.
	\end{lemma}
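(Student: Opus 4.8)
The plan is to expand both resolvents as Neumann-type series and pass to the limit term by term. Writing $P_n:=P_{\mu_n}$ and $P:=P_\mu$, we have $R_nf(x)=\sum_{k\ge0}[P_n^k(x,f)-\mu_n(f)]$ and $R_\mu f(x)=\sum_{k\ge0}[P^k(x,f)-\mu(f)]$; the latter makes sense because $P_\mu$ is $V$-geometrically ergodic — this is implicit in the statement (which refers to $\sigma^2$, the asymptotic variance of the ideal chain $\{X_k\}$), and in any case $P_\mu$ inherits the drift of \cref{avar:hp:drift}, since applying the mean value inequality of \cref{avar:hp:boundedder} to $V\in\BVfunX$ gives $P_nV(x)\to P_\mu V(x)$ and hence $P_\mu V\le\lambda V+b1_C$. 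By \cref{avar:lemma:geomerg} one has the uniform geometric domination $|P_n^k(x,f)-\mu_n(f)|\le L\Vnormf{f}V(x)\beta^k$ with $\beta:=1-1/L<1$ (and, incidentally, $\sup_n\Vnormf{R_nf}\le L^2\Vnormf{f}$ by \cref{app:lemma:resolventbound}). So, by dominated convergence for series, it suffices to show, in probability, that (i) $\mu_n(f)\to\mu(f)$ and (ii) $P_n^k(x,f)\to P^k(x,f)$ for each fixed $k\ge0$ and $x\in\Xspace$.

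Step (i) is immediate from $|\mu_n(f)-\mu(f)|\le\Vnormf{f}\Vnorm{\mu_n-\mu}$ together with \cref{app:lemma:ptwisetoV}, which yields $\Vnorm{\mu_n-\mu}\to0$ under \cref{avar:hp:unifergodicity,avar:hp:Vgrowth,avar:hp:ptwiseapprox}.

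Step (ii) I would prove by induction on $k$, the case $k=0$ being trivial. For the inductive step, decompose
\begin{align*}
	P_n^{k+1}(x,f)-P^{k+1}(x,f)
	&=\int(P_n^kf-P^kf)\,\dif P(x,\cdot)\\
	&\quad+\int(P_n^kf-P^kf)\,\dif(P_n(x,\cdot)-P(x,\cdot))+\int P^kf\,\dif(P_n(x,\cdot)-P(x,\cdot)).
\end{align*}
Iterating the drift gives $\Vnormf{P^kf}\le(1+b/(1-\lambda))\Vnormf{f}$ and $\Vnormf{P_n^kf-P^kf}\le2(1+b/(1-\lambda))\Vnormf{f}$, uniformly in $n$ and $k$, so the last two integrals are bounded by a fixed multiple of $\Vnorm{P_n(x,\cdot)-P(x,\cdot)}$; by the mean value inequality of \cref{avar:hp:boundedder} the latter is at most $M_x\Vnorm{\mu_n-\mu}+M_{\perp,x}|\mu_n(x)-\mu(x)|\to0$, provided the Lipschitz constants remain bounded along $\{\mu_n\}$ — for the Hastings and Gibbs families this is read off the explicit expressions in \cref{cor:MHboundedder,cor:Gibbsboundedder}, using that \cref{avar:hp:ptwiseapprox} forces $\mu_n(x)\to\mu(x)>0$, keeping the relevant denominators away from $0$. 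The first integral vanishes by the dominated convergence theorem for the \emph{fixed} measure $P(x,\cdot)$: its integrand tends to $0$ at every point by the induction hypothesis and is dominated by $2(1+b/(1-\lambda))\Vnormf{f}\,V$, which is $P(x,\cdot)$-integrable since $P_\mu$ obeys the drift.

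I expect the crux to be exactly the handling of the first two integrals in this decomposition: because the one-step law $P_n(x,\cdot)$ itself varies with $n$, one cannot apply dominated convergence against it directly; the device is to swap $P_n(x,\cdot)$ for the limit $P(x,\cdot)$ at the price of an error controlled in $V$-norm by the mean value inequality, and only then appeal to dominated convergence against a fixed measure. A secondary technical point — needed for the mean value inequality to be useful — is the uniform-in-$n$ boundedness of the Lipschitz constants of \cref{avar:hp:boundedder}, which follows from the pointwise convergence \cref{avar:hp:ptwiseapprox} and the positivity of the indexed densities. Since every ingredient in fact converges $\mathbb{P}$-almost surely, the conclusion holds $\mathbb{P}$-a.s., and a fortiori in probability.
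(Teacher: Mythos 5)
Your proof is correct in substance but follows a genuinely different route from the paper's. The paper does not expand $R_nf$ and $R_\mu f$ as series and compare them term by term; instead it invokes the resolvent perturbation identity $R_n-R_\mu=R_\mu(P_n-P_\mu)R_n+(\mu-\mu_n)R_n$ (proved in the appendix from the Poisson equation), kills the second term by \cref{app:lemma:resolventbound} and $\Vnorm{\mu_n-\mu}\to0$, and then writes $R_\mu(P_n-P_\mu)R_nf(x)$ as the series $\sum_k[P_\mu^k(x,g_n)-\mu(g_n)]$ with the \emph{single} vanishing function $g_n:=(P_n-P_\mu)R_nf$, truncating the tail uniformly in $n$ and sending each fixed-$k$ term to zero via one application of the mean value inequality plus dominated convergence against the fixed kernel $P_\mu^k(x,\cdot)$. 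That identity localizes all the $n$-dependence in one place and avoids your induction on $k$ comparing $P_n^k$ with $P_\mu^k$; your route is more elementary but pays for it in two places you correctly identify: you must equip the limiting kernel $P_\mu$ with the drift (your limiting argument $P_nV\to P_\mu V$ via the mean value inequality is fine, and the absolute convergence of $\sum_k[P_\mu^k(x,f)-\mu(f)]$ actually falls out of the uniform domination $L\Vnormf{f}V(x)\beta^k$ rather than needing a separate minorization for $P_\mu$), and your DCT for $\int(P_n^kf-P_\mu^kf)\,\dif P_\mu(x,\cdot)$ needs a Fubini null-set step to convert ``for each $y$, a.s.''\ into ``a.s., for $P_\mu(x,\cdot)$-a.e.\ $y$''. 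Your worry about the $n$-uniformity of $M_x,M_{\perp,x}$ is legitimate but the paper's own proof makes the same implicit uniform reading of \cref{avar:hp:boundedder}, so it is not a gap relative to the paper. The one thing you omit that the paper includes is the final separability-plus-Feller (\cref{hp:Fellertypechain}) upgrade from ``for each $x$, a.s.''\ to ``a.s., simultaneously for all $x$''; your argument delivers the former, which matches the literal statement of the lemma but is weaker than what the paper actually establishes.
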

	\begin{proof}
		From \eqref{avar:eq:PoiResolventkernelID} we have that $R_nf-Rf = R(P_n-P)R_nf + (\mu-\mu_n)R_nf$. Since $R_nf$ has uniformly bounded $V$-norm by \cref{app:lemma:resolventbound}, the second term vanishes with $n\rightarrow\infty$ under \cref{avar:hp:ptwiseapprox}, and we just need to
		show that $R(P_n-P)R_nf(x)\rightarrow 0$ for all $x\in\Xspace$ \Pas. Since $(P_n-P)R_nf\in\BVfunX$ by \cref{app:lemma:resolventbound}, the series
		\begin{equation*}
			R(P_n-P)R_nf(x) = \sum_{k=1}^\infty[P^k(x,(P_n-P)R_nf)-\mu((P_n-P)R_nf)]
		\end{equation*}
		must be convergent, again by \cref{app:lemma:resolventbound}. In particular for any given $\varepsilon>0$ there exists a finite $\bar{k}$ independent of $n$ such that
		\begin{equation*}
			R(P_n-P)R_nf(x) = \sum_{k=\bar{k}}^\infty[P^k(x,(P_n-P)R_nf)-\mu((P_n-P)R_nf)] < \varepsilon.
		\end{equation*}   
		Therefore, we just need to show that
		\begin{equation} \label{avar:eq:lemmaproofconvk}
			P^k(x,(P_n-P)R_nf)-\mu((P_n-P)R_nf) \rightarrow 0
  		\end{equation}
		for $k=1,...\bar{k}$. Since $\bar{k}$ is finite and also independent of $n$, it is enough to show that \eqref{avar:eq:lemmaproofconvk} holds for a fixed finite $k$. In fact, by the derivative in the invariant measure being bounded by \cref{avar:hp:boundedder}, by mean value, for all $x\in\Xspace$ we have some finite positive $M_x,M_{\perp,x}$ such that 
		\begin{equation*}
			|(P-P_n)(x,R_nf)|\leq \Vnormf{R_nf}[M_x\Vnorm{\mu_n-\mu}+M_{\perp,x}|\mu_n(x)-\mu(x)|]
		\end{equation*}
		and the right hand side goes to 0 \Pas by \cref{app:lemma:resolventbound,avar:hp:ptwiseapprox,avar:hp:boundedder} for all $x\in\Xspace$. Also, $|(P-P_n)(x,R_nf)|$ is uniformly bounded in $n$ by a function in $\BVfunX$ by \cref{app:lemma:resolventbound}. Because $P^k(x,V)<\infty$ (\cref{avar:lemma:geomerg}), an application of the Dominated Convergence Theorem shows that \eqref{avar:eq:lemmaproofconvk} indeed holds \Pas for all $x\in\Xspace$. This proves  $R_nf(x)\rightarrow Rf(x)$ \Pas for all $x\in\Xspace$. 

        We can now use a standard separability argument to prove that $R_nf(x)\rightarrow Rf(x)$ for all $x\in\Xspace$ \Pas. Let $\{x_j\}$ be a countable dense subset of $\Xspace$, its existence guaranteed by $\Xspace$ being separable. Notice that then it is enough for the claim to prove that $R_nf(x)\rightarrow Rf(x)$ for all $x\in\Xspace$ if and only if $R_nf(x_j)\rightarrow Rf(x_j)$ for all $x_j\in\{x_j\}$ as the intersection of countably many probability 1 events has probability 1.
        The only if direction is trivial. To prove the if direction, we can write, for some $j$,
        \begin{equation*}
            |R_nf(x)-Rf(x)|\leq |R_nf(x)-R_nf(x_j)|+|Rf(x)-Rf(x_j)| + |R_nf(x_j) - Rf(x_j)|.
        \end{equation*}
        The last term is small with $n$ by hypothesis. Since $\{x_j\}$ is dense in $\Xspace$, we can pick a $x_j$ sufficiently close to $x$ so that the continuity of $x\mapsto R_nf(x)$ and $x\mapsto Rf(x)$ (which follows from \cref{hp:Fellertypechain} and the definition of the Poisson resolvent) ensure the first two terms are small, too. The claim follows. 
	\end{proof}
	
	For a kernel $K$ on $(\Xspace,\BorelX)$ define $\Vnorm{K}:=\sup_x \Vnorm{K(x,\cdot)}$. The next lemma is a more quantitative version of the one just proved.
	
	\begin{lemma} \label{app:lemma:resolventconvfast}
		Assume that Assumptions \ref{avar:hp:unifergodicity}, \ref{avar:hp:Vgrowth}, \ref{avar:hp:unifapprox} and \ref{avar:hp:unifboundedder} hold. Then, for $\alpha\in(0,1)$, we have $n^{-1/2}\sum_{i=1}^n \Vnorma{R_i-R_{i-1}}\rightarrow 0$ \Pas.
	\end{lemma}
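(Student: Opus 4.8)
The plan is to reduce everything to the operator identity \eqref{avar:eq:PoiResolventkernelID} and then establish, for a deterministic constant $C<\infty$ independent of $i$,
\[
\Vnorma{R_i - R_{i-1}} \;\leq\; C\Big(\Vnorm{\mu_i - \mu_{i-1}} + \sup_x|\mu_i(x) - \mu_{i-1}(x)|\Big),
\]
after which the conclusion is immediate from \cref{avar:hp:unifapprox}. The two ingredients making this work are the resolvent bound of \cref{app:lemma:resolventbound} (so that, for the uniform constant $L$ of \cref{avar:lemma:geomerg}, each $R_{\mu_n}$ acts on $\BVfunXa$ with operator norm $\leq L^2$, uniformly in $n$) and the mean value inequality supplied by \cref{avar:hp:unifboundedder}. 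I read $\Vnorma{R_i-R_{i-1}}$ as the operator norm of $R_i-R_{i-1}$ on $\BVfunXa$.

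First I would apply \eqref{avar:eq:PoiResolventkernelID} with $\nu = \mu_i$ and $\mu = \mu_{i-1}$, obtaining, for every $f\in\BVfunXa$,
\[
R_i f - R_{i-1}f \;=\; R_{i-1}\big((P_{\mu_i} - P_{\mu_{i-1}})R_i f\big) \;+\; \big[(\mu_{i-1} - \mu_i)(R_i f)\big]\,\mathbf{1},
\]
and bound the two terms in $V^\alpha$-norm, uniformly over $\Vnormfa{f}\leq 1$. For the first term: by \cref{app:lemma:resolventbound}, $\Vnormfa{R_i f}\leq L^2$, hence also $\Vnormf{R_i f}\leq L^2$ since $V\geq V^\alpha\geq 1$; applying the mean value inequality of \cref{avar:hp:unifboundedder} (with $\rho=\delta_x$) to the finite-$V$-norm test function $R_i f$ gives, for constants $M_1,M_2<\infty$ independent of $i$,
\[
\big|(P_{\mu_i}-P_{\mu_{i-1}})(x,R_i f)\big| \;\leq\; L^2\big(M_1\Vnorm{\mu_i - \mu_{i-1}} + M_2|\mu_i(x) - \mu_{i-1}(x)|\big), \qquad x\in\Xspace,
\]
so that $g_i := (P_{\mu_i}-P_{\mu_{i-1}})(\cdot,R_i f)$ is a \emph{bounded} function with $\supnorm{g_i}\leq L^2\delta_i$, where $\delta_i := M_1\Vnorm{\mu_i-\mu_{i-1}} + M_2\sup_x|\mu_i(x)-\mu_{i-1}(x)|<\infty$ (the $\mu_n$ having bounded densities). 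Since $V^\alpha\geq 1$ we get $\Vnormfa{g_i}\leq \supnorm{g_i}\leq L^2\delta_i$, and a second use of \cref{app:lemma:resolventbound} yields $\Vnormfa{R_{i-1}g_i}\leq L^2\Vnormfa{g_i}\leq L^4\delta_i$. For the second (constant) term, $|(\mu_{i-1}-\mu_i)(R_i f)|\leq \Vnorm{\mu_i-\mu_{i-1}}\,\Vnormf{R_i f}\leq L^2\Vnorm{\mu_i-\mu_{i-1}}$, and as $\Vnormfa{\mathbf{1}}=1$ this is also its $V^\alpha$-norm. Adding and taking the supremum over $f$ gives $\Vnorma{R_i - R_{i-1}}\leq L^4\delta_i + L^2\Vnorm{\mu_i-\mu_{i-1}}$, which is of the displayed form with, say, $C := L^4(M_1+1) + L^4 M_2$.

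It then remains to sum: by the above,
\[
n^{-1/2}\sum_{i=1}^n \Vnorma{R_i - R_{i-1}} \;\leq\; C\,n^{-1/2}\sum_{i=1}^n \Vnorm{\mu_i - \mu_{i-1}} \;+\; C\,n^{-1/2}\sum_{i=1}^n \sup_x|\mu_i(x) - \mu_{i-1}(x)|,
\]
and both sums on the right tend to $0$ \Pas by \cref{avar:hp:unifapprox}, the whole derivation being pathwise on the probability-one event where that assumption holds. I expect the only delicate point to be the bookkeeping among the weighted norms, where $\supnorm{h}\geq\Vnormfa{h}\geq\Vnormf{h}$ for every $h$: concretely, the observation that the middle function $g_i$ is genuinely \emph{bounded} rather than merely $V$-controlled is exactly what lets the two resolvent bounds compose, and it is essential that the $\{\delta_x\}$-uniform part of the mean value inequality produces $\sup_x|\mu_i(x)-\mu_{i-1}(x)|$, precisely the summand that \cref{avar:hp:unifapprox} is tailored to control.
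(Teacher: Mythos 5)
Your proof is correct, and it takes a genuinely different route from the paper's. The paper disposes of the perturbation step in one line by invoking the resolvent perturbation bound of Fort et al.\ (their Lemma A.1), which gives $\Vnorma{R_{i}-R_{i-1}} \leq 3(L_{\mu_i}\wedge L_{\mu_{i-1}})^6 \mu_{i}(V^\alpha)\sup_x \Vnorma{P_{i}(x,\cdot)-P_{i-1}(x,\cdot)}$, and then feeds in the mean value inequality from \cref{avar:hp:unifboundedder} and sums using \cref{avar:hp:unifapprox}. You instead rederive the needed perturbation estimate from first principles, using only material already in the paper: the identity \eqref{avar:eq:PoiResolventkernelID} (which the paper proves and deploys in \cref{app:lemma:resolventconv}), the uniform resolvent bound of \cref{app:lemma:resolventbound}, and the same mean value inequality. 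Your bookkeeping is sound --- in particular the observation that $(P_{\mu_i}-P_{\mu_{i-1}})(\cdot,R_i f)$ is genuinely bounded (not merely $V$-controlled), so that it lies in $\BVfunXa$ and the second resolvent application composes cleanly, and that the constant term $(\mu_{i-1}-\mu_i)(R_i f)$ has $V^\alpha$-norm at most its absolute value since $V^\alpha\geq 1$. What your approach buys is self-containedness and a slightly cleaner constant ($L^4(M_1+1)+L^4M_2$ versus a bound involving $L^6$ and $\sup_i\mu_i(V^\alpha)$); what the paper's approach buys is brevity and consistency with the citation-based style of the surrounding appendix (the same external lemma is reused in the proof of \cref{avar:thm:ZCLT}). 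One shared implicit point, present in both arguments: \cref{avar:hp:unifboundedder} as literally stated gives constants $M_1,M_2$ for each pair $(\mu_n,\mu_{n-1})$, and both you and the paper take these to be uniform in $n$; it is worth flagging that uniformity explicitly when applying the assumption.
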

	\begin{proof}
		Using \cite[Lemma A.1]{fort2014clt} and the hypothesis of uniformly bounded derivative in the invariant distribution \cref{avar:hp:unifboundedder} we have
		\begin{align*}
			\Vnorma{R_{i}-R_{i-1}} &\leq 3(L_{\mu_i}\wedge L_{\mu_{i-1}})^6 \mu_{i}(V^\alpha)\sup_x \Vnorma{P_{i}(x,\cdot)-P_{i-1}(x,\cdot)} \\
			&\leq \sup_i \mu_{i}(V^\alpha) \sup_i L_{\mu_i}^6[M_1\Vnorma{\mu_{i}-\mu_{i-1}}+M_2\sup_x|\mu_i(x)-\mu_{i-1}(x)|]
		\end{align*}
		and the claim follows by \cref{avar:lemma:geomerg,avar:hp:unifapprox}.
	\end{proof}

	\subsection{Central Limit Theorem for sMCMC} \label{app:smcmcproof} 
	We come to the proof of \cref{avar:thm:YCLT}. Let $f\in\CbVfunX$ and denote
	\begin{equation*}
		S_n(f) := n^{-1/2}\sum_{i=0}^n f(Y_{n,i})-\mu(f) = S_{1,n}(f) + S_{2,n}(f) 
	\end{equation*}
	where $S_{1,n}(f)= n^{-1/2}\left( \sum_{i=0}^n f(Y_{n,i})-\mu_n(f)\right)$ and $S_{2,n}(f)=n^{-1/2} (\mu_n-\mu)(f)$.
	\begin{proof}[Proof of \cref{avar:thm:YCLT}]
		We first show that $\mathbb{E}[\exp{(iuS_{1,n})}|\mathcal{F}_{n,0}]\rightarrow \exp(-(u^2/2)\sigma^2)$, $u\in\mathbb{R}^d$. Similarly to \cref{app:thm:Yllns}, we write $n^{-1/2} \sum_{i=1}^n f(Y_{n,i}) - \mu_n(f) = \sum_{i=1}^n U_{n,i} + r_n$, where $U_{n,i}:=n^{-1/2}[R_n f(Y_{n,i}) - P_n R_n f(Y_{n,i-1})]$, $r_{n}:=n^{-1/2}[P_nR_nf(Y_{n,0})-P_nR_nf(Y_{n,n})]$.
		
		$r_n\rightarrow_{\mathbb{P}}0$ follows from the same arguments used in the proof of \cref{app:thm:Yllns}. To verify the claim it then suffices to check $\mathbb{E}[\exp{(iu\sum_{i=1}^n U_{n,i})}|\mathcal{F}_{n,0}]\rightarrow \exp(-(u^2/2)\sigma^2)$, and we aim to apply \cite[Theorem A.3]{douc2007limit}. We need to check that
		\begin{equation*}
			\sum_{i=1}^N \mathbb{E}[U_{n,i}^2|\mathcal{F}_{n,i-1}] \rightarrow_{\mathbb{P}} \sigma^2 \quad \text{and} \quad \sum_{i=1}^n \mathbb{E}[|U_{n,i}|^21_{|U_{n,i}|\geq \epsilon}|\mathcal{F}_{n,i-1}]\rightarrow_{\mathbb{P}} 0.
		\end{equation*}
		The second condition follows using basically the same techniques we used in the proof of \cref{app:thm:Yllns} to check $\mathbb{E}[|U_{n,i}|1_{|U_{n,i}|\geq \epsilon}|\mathcal{F}_{n,i-1}]\rightarrow_{\mathbb{P}} 0$. To prove the first, notice that $\sum_{i=1}^N \mathbb{E}[U_{n,i}^2|\mathcal{F}_{n,i-1}]=n^{-1}\sum_{i=1}^N F_n(Y_{n,i})$,  where $F_{n}(y):=P_{n}(R_{n}f(y))^2-(P_{n}R_{n}f(y))^2$ and that $\sigma^2=\mu(F)$, with $F(y):=P(Rf(y)^2)-(PRf(y))^2$. We prove this via the LLN \cref{app:thm:Yllns}.
		Since by \cref{app:lemma:resolventbound} $\sup_n \Vnormfa{F_n}<\infty$, we only need to check that $F_n\rightarrow F$ pointwise in probability for all $f\in\BVfunX$. By the triangle inequality,
		\begin{align} \label{app:eq:Fnbound1}
			|F_n(y)-F(y)|&\leq |P_{n}(R_{n}f(y)^2)-P(R_{n}f(y)^2)|+|(P_{n}R_{n}f(y))^2-(PR_{n}f(y))^2| \nonumber \\ 
			&+ |(PR_{n}f(y))^2-(PRf(y))^2|+ |P(R_{n}f(y)^2)-P(Rf(y)^2)|,
		\end{align}
		and we can check that, indeed, since the derivative in the invariant distribution of $P$ is bounded at $\mu$ towards every $\mu_n$,
		\begin{align}  \label{app:eq:Fnbound2}
			&|P_{n}(R_{n}f(y)^2)-P(R_{n}f(y)^2)|  
			\leq \Vnormfa{R_{n}f^2}[M_{1,y}\Vnorma{\mu_n-\mu}+M_{2,y}|\mu_n(y)-\mu(y)|].
		\end{align}
		The right hand side goes to zero under our assumptions by \cref{app:lemma:resolventbound}.
		Similarly, the second term of the triangle inequality vanishes because
		\begin{align}  \label{app:eq:Fnbound3}
			&|(P_{n}R_{n}f(y))^2-(PR_{n}f(y))^2|   
			=|P_{n}R_{n}f(y)+PR_{n}f(y)||P_{n}R_{n}f(y)-PR_{n}f(y)|  \nonumber \\
			&\leq |P_{n}R_{n}f(y)+PR_{n}f(y)|\Vnormfa{R_{n}f}[M_{1,y}\Vnorma{\mu_n-\mu}+M_{2,y}|\mu_n(y)-\mu(y)|] \nonumber \\
			&\leq 2\Vnormfa{R_{n}f}^2[M_{1,y}\Vnorma{\mu_n-\mu}+M_{2,y}|\mu_n(y)-\mu(y)|]
		\end{align}
		Also, since $|P(R_{n}f(y)^2)-P(Rf(y)^2)| = |P(y,R_{n}f^2-R f^2)|$, the third and the fourth vanish by Bounded Convergence Theorem and \cref{app:lemma:resolventconv,app:lemma:resolventbound}. Thus, we have proved
		\begin{equation*}
			\mathbb{E}(\exp (iuS_{1,n}(f))|\mathcal{F}_{n,0})\rightarrow_{\mathbb{P}} \exp \left(-u^2\sigma^2(f)/2\right), \quad u\in\mathbb{R}^d.
		\end{equation*}
		Furthermore, by \cref{avar:hp:Yapproxnormality}, for $f\in\mathcal{G}(\Xspace)$,
		\begin{equation*}
			\mathbb{E}(\exp (iuS_{2,n}(f)))\rightarrow \exp \left(-u^2v^2(f)/2\right), \quad u\in\mathbb{R}^d
		\end{equation*}
		for $f\in\mathcal{G}(\Xspace)$. It follows that for $f\in \CbVfunX\cap \mathcal{G}(\Xspace)$,
		\begin{align*}
			\mathbb{E} (\exp(iuS_n(f))) &= \mathbb{E}((\mathbb{E}(\exp (iuS_{1,n}(f))|\mathcal{F}_{n,0})-\exp(-u^2\sigma^2(f)/2))\exp(iuS_{2,n}(f)))) \\
			&+\exp(-u^2\sigma^2(f)/2)  \mathbb{E}(\exp (iuS_{2,n}(f))) \rightarrow_{\mathbb{P}} \exp (-u^2(v^2(f)+\sigma^2(f))/2).
		\end{align*}  
	\end{proof}
	
	\subsection{Central Limit Theorem for iMCMC} \label{app:imcmcproof}
	
	We come to the proof of \cref{avar:thm:ZCLT}. Let $f\in\CbVfunXa$ and denote
	\begin{equation*}
		S_n(f) := n^{-1/2}\sum_{i=1}^n f(Z_i)-\mu(f) = S_{1,n}(f) + S_{2,n}(f) 
	\end{equation*}
	where $S_{1,n}(f)= n^{-1/2}\sum_{i=1}^n f(Y_i)-\mu_{i-1}(f)$ and $S_{2,n}(f)=n^{-1/2}\sum_{i=1}^n(\mu_{i-1}-\mu)(f)$. 
	\begin{proof}[Proof of \cref{avar:thm:ZCLT}]
		We first show that $\mathbb{E}[\exp{(iuS_{1,n})}|\mathcal{F}_{n,0}]\rightarrow \exp(-(u^2/2)\sigma^2)$, $u\in\mathbb{R}^d$. To do that, we check the conditions of \cite[Theorem 2.2]{fort2014clt} to prove that $S_{1,n}\Rightarrow N(0,\sigma^2(f))$. Their A1 and A2 are satisfied under our assumption by \cref{avar:hp:unifergodicity}. To check their A3 we need to check that
		\begin{equation} \label{app:eq:ZverifyA3}
			\frac{1}{\sqrt{n}} \sum_{i=1}^n \Vnormfa{P_{i}R_{i}f-P_{i-1}R_{i-1}f}V^\alpha(Z_i) \rightarrow_{\mathbb{P}} 0
		\end{equation}
		and
		\begin{equation*}
			\frac{1}{n^{1/2\alpha}} \sum_{i=1}^n L_{i}^{2/\alpha}P_{i}V(Z_i) \rightarrow_{\mathbb{P}} 0.
		\end{equation*}
		The last condition is satisfied by \cref{avar:lemma:geomerg} since $\sup_n V(Z_n)$ is \Pas-finite and $\sup_n L_n<\infty$. To verify the first, we use \cite[Lemma A.1]{fort2014clt} and then the uniformly bounded derivative \cref{avar:hp:unifboundedder} with a mean value inequality to write
		\begin{align*}
			\Vnormfa{P_{i}R_{i}f-P_{i-1}R_{i-1}f} 
			&\leq \Vnormfa{f}5L^6\mu_{i}(V^\alpha)\sup_x \Vnorma{P_{i}(x,\cdot)-P_{{i-1}}(x,\cdot)} \\
			&\leq \Vnormfa{f}5L^6\mu_{i}(V^\alpha)[M_1\Vnorma{\mu_i-\mu_{i-1}}+M_2\sup_x|\mu_i(x)-\mu_{i-1}(x)|]
		\end{align*}
		for some $M_1,M_2<\infty$. Since \cref{avar:lemma:geomerg} shows that $\limsup_n \mu_n(V)<\infty$, under \cref{avar:hp:unifapprox}, \eqref{app:eq:ZverifyA3} is verified. We now need to check A4 in  \cite[Theorem 2.2]{fort2014clt}, namely, that $n^{-1}\sum_{i=1}^n F_{i}(Z_i)\rightarrow_{\mathbb{P}}\sigma^2(f)=\mu(F)$, where $F_{n}(y):=P_{n}(R_{n}f(y))^2-(P_{n}R_{n}f(y))^2$ and $F(y):=P(R f(y)^2)-(PRf(y))^2$. For this we use \cref{app:thm:Zllns}. By \cref{app:lemma:resolventbound}, $\sup_n \Vnormfa{F_n}<\infty$, and by using the same arguments of the proof of \cref{avar:thm:YCLT}, we see that $F_n(y)\rightarrow F(y)$ for all $f\in\CbVfunXa$. Hence we only need to verify that $ \sum_{i=1}^\infty i^{-1}\sup_x \Vnormf{F(\cdot,\mu_i)-F(\cdot,\mu_{i-1})}<\infty$.   
		Since 
		\begin{align*}
			|(P_{{i-1}}R_if(x))^2&-(P_{i-1}R_{i-1}f(x))^2+ |P_{i-1}(R_if(y)^2)-P_{i-1}(R_{i-1}f(y)^2)|  \\ 
			&\leq \sup_x|R_if(x)+R_{i-1}f(x)||R_if(x)-R_{i-1}f(x)|+ \sup_x |R_if^2(x)-R_{i-1}f^2(x)| \\
			&\leq \Vnorma{R_i-R_{i-1}}c_i
		\end{align*}
		with $c_i:=[|f^2|_{V^\alpha}+|f|_{V^\alpha}\sup_x|R_if(x)+R_{i-1}f(x)|]$, using again the mean value estimates \eqref{app:eq:Fnbound1}-\eqref{app:eq:Fnbound3} with $\mu_{i-1}$ instead of $\mu$ and $M_1, M_2$ instead of $M_{1,y}, M_{2,y}$ (in light of \cref{avar:hp:unifboundedder}) we obtain
		\begin{align*}
			\sum_{i=1}^\infty i^{-1}\sup_x \Vnormf{F(\cdot,\mu_i)-F(\cdot,\mu_{i-1})}
			\leq 
			\sum_{i=1}^\infty &i^{-1}[M_{1}\Vnorma{\mu_i-\mu_{i-1}}+M_{2}\sup_x|\mu_i(x)-\mu_{i-1}(x)|]d_i \\ &+\Vnorma{R_i-R_{i-1}}c_i
		\end{align*}
		where $d_i:=2|R_if^2|_{V^\alpha}+ 2|R_if|^2_{V^\alpha}$. Since $\sup_n c_n,d_n<\infty$ by \cref{avar:hp:unifapprox,app:lemma:resolventconvfast} the condition is verified. This proves that $S_{1,n}\Rightarrow N(0,\sigma^2(f))$ and then \eqref{avar:eq:ZrandomcenterCLT}.
		The proof for \eqref{avar:eq:ZdetcenterCLT} from \eqref{avar:eq:ZrandomcenterCLT} follows the same steps as the one for \eqref{avar:eq:YdetcenterCLT} from \eqref{avar:eq:YrandomcenterCLT}.
	\end{proof}

\end{document}